\newcommand{\myline}{\par
  \kern -15pt }
\newcommand{\G}{\mathcal{G}}
\def\co{\colon\thinspace}
\newcommand{\filta}{\widehat{F}}
\newcommand{\cone}[1]{\operatorname{cone}\left(#1\right)}
\newcommand{\bu}{\bullet}
\newcommand{\bZ}{\mathbb{Z}}
\newcommand{\bF}{\mathbb{F}}
\newcommand{\Khred}{\widetilde{Kh}{}}
\newcommand{\CKhred}{\widetilde{CKh}{}}
\newcommand{\rk}{\operatorname{dim}}
\newcommand{\half}{\frac{1}{2}}
\newcommand{\zero}
	{\raisebox{-2pt}
	{\includegraphics[scale=0.085]{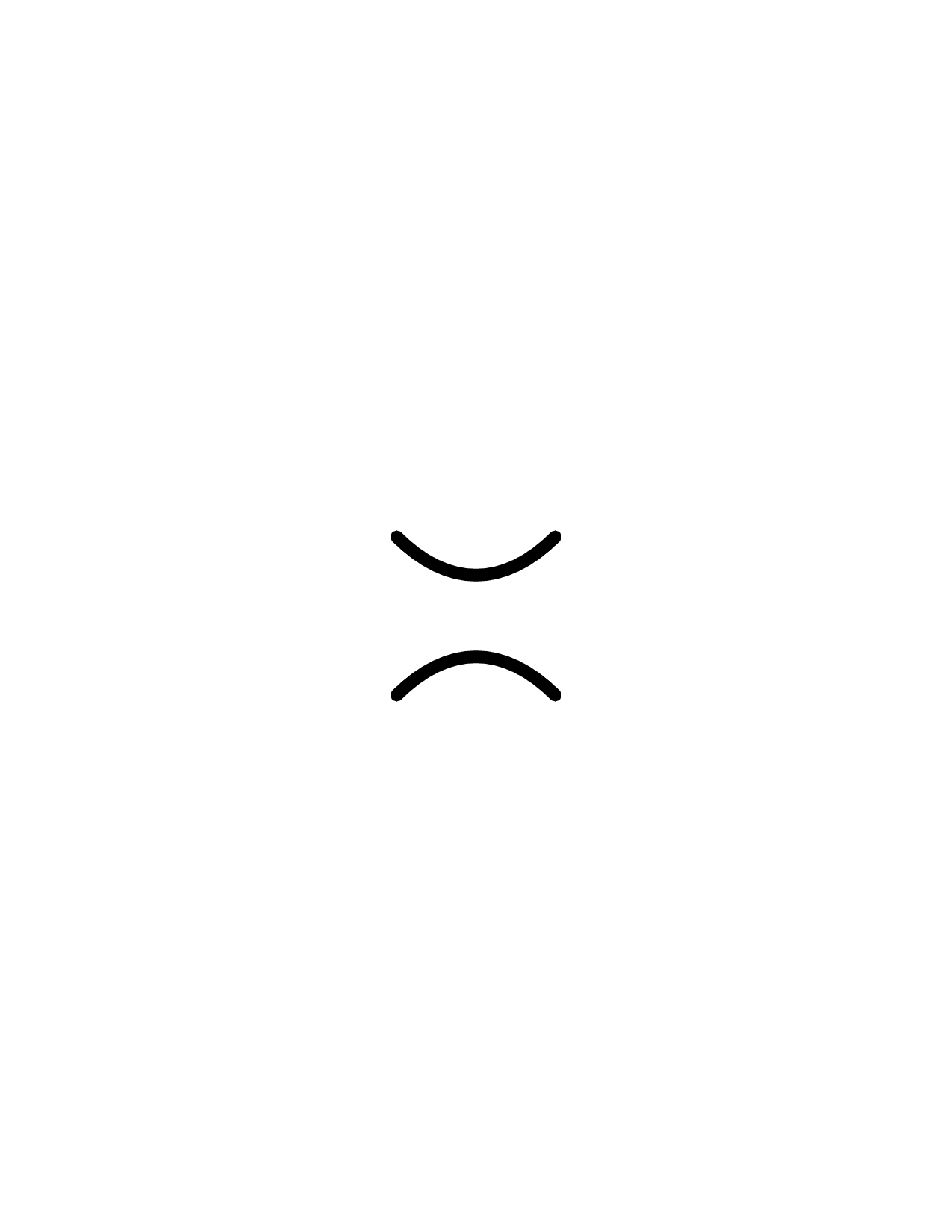}}}
\newcommand{\otherleftcross}
	{\raisebox{-2pt}
	{\rotatebox{90}
	{\includegraphics[scale=0.085]{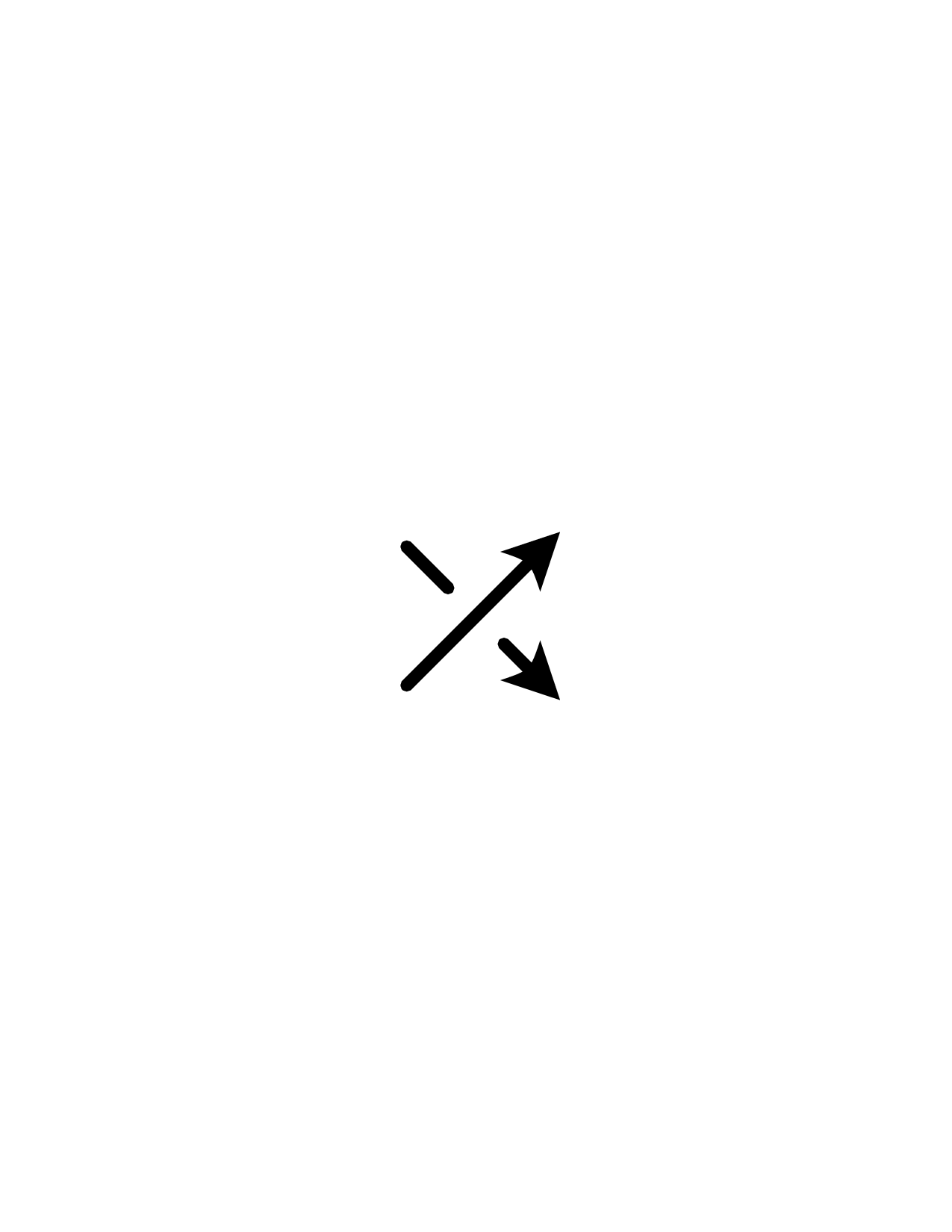}}}}
\newcommand{\otherrightcross}
	{\raisebox{-2pt}
	{\rotatebox{90}
	{\includegraphics[scale=0.085]{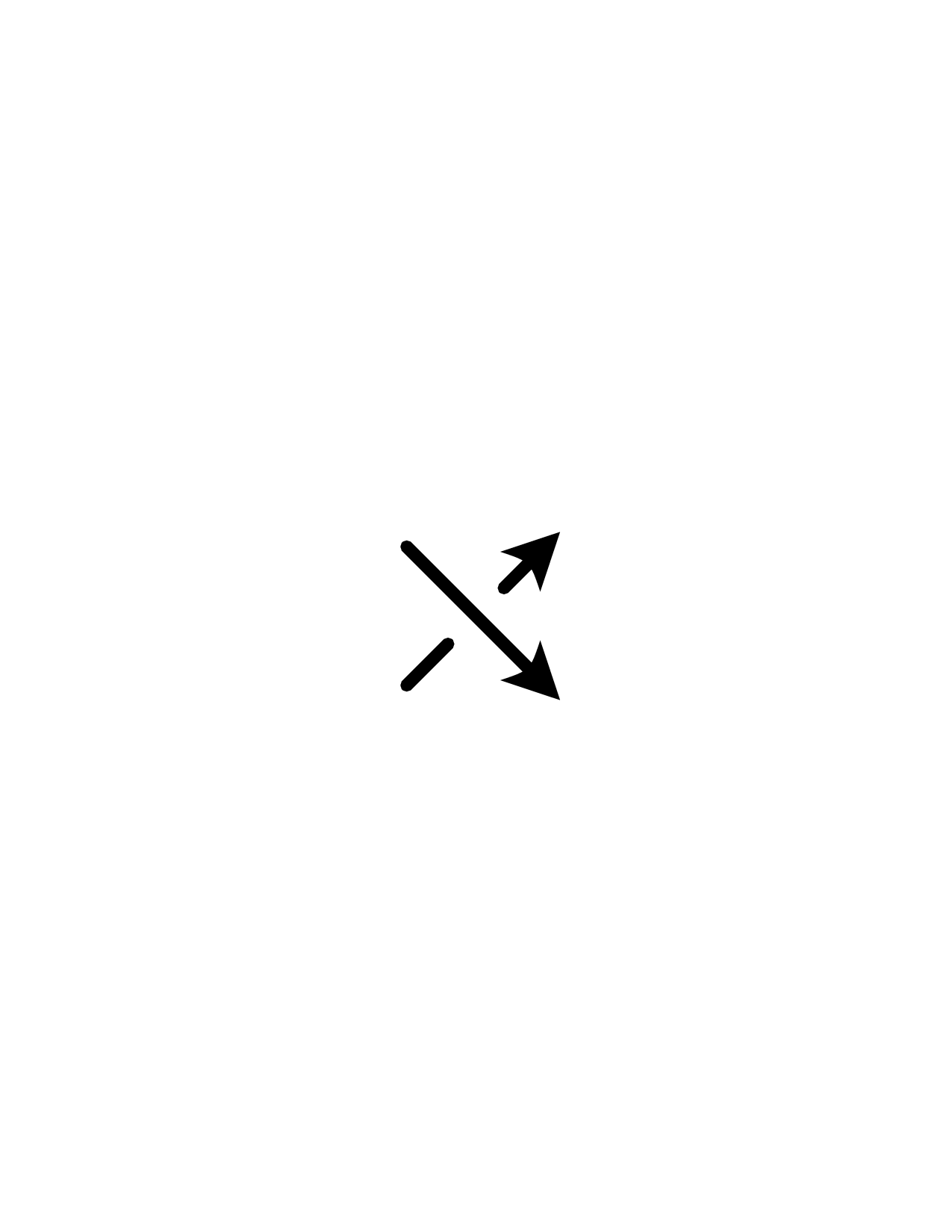}}}}
	\newcommand{\otheroriented}
	{\raisebox{-1.5pt}
	{\rotatebox{90}
	{\includegraphics[scale=0.085]{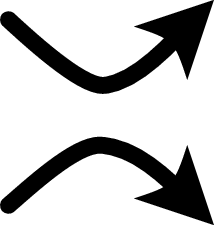}}}}
\newtheorem{thm}{Theorem}
\newtheorem{theorem}{Theorem}[section]
\newtheorem{prop}{Proposition}
\newtheorem{proposition}[theorem]{Proposition}  
\newtheorem{cor}[thm]{Corollary}
\newtheorem{conj}[theorem]{Conjecture}
\newtheorem{defn}[theorem]{Definition}
\newtheorem{question}[theorem]{Question}
\newcommand{\Alex}{\Delta_K(t)}
\newcommand{\Q}{\mathbb{Q}}
\newcommand{\Z}{\mathbb{Z}}
\newcommand{\cm}{\cdot}
\newcommand\SpinC{\mathrm{Spin}^c}
\newcommand{\F}{\mathbb F}
\newcommand\Filt{\mathcal F}
\newcommand\x{\mathbf x}
\newcommand\y{\mathbf y}
\newcommand\ModSphere{\ModFlow\left({\mathbb S}\longrightarrow 
\Sym^{g-1}(\Sigma_{1})\times \Sym^2(\Sigma_{2})\right)}
\newcommand\ModSpheres\ModSphere
\newcommand\CF{CF}
\newcommand\CFp{\CFb}
\newcommand\CFm{\CF^-}
\newcommand\HFp{\HFb}
\newcommand\CFinf{CF^\infty}
\newcommand\HFinf{HF^\infty}
\newcommand\CFb{CF^+}
\newcommand\HFa{\widehat{HF}}
\newcommand\HFb{HF^+}
\newcommand\UnparModSp{\widehat \ModSp}
\newcommand\UnparModFlow\UnparModSp
\newcommand\Mod\ModSp
\newcommand{\spinc}{\mathfrak s}
\newcommand{\spinct}{\mathfrak t}
\newcommand\ModMaps{\mathcal M}
\newcommand\ModSp\ModMaps
\newcommand\CFK{CFK}
\newcommand\HFK{HFK}
\newcommand\CFKp{\CFK^+}
\newcommand\CFKm{\CFK^-}
\newcommand\CFKinf{\CFK^{\infty}}
\newcommand\HFKp{\HFK^+}
\newcommand\HFKa{\widehat\HFK}
\newcommand\HFKm{\HFK^-}
\newcommand\HFKinf{\HFK^{\infty}}
\newcommand\ons{Ozsv{\'a}th and Szab{\'o}}
\newcommand\os{{Ozsv{\'a}th-Szab{\'o}}}
\newcommand{\Kcross}{\mathcal{K}}
\newtheorem*{namedtheorem}{\theoremname}
\newcommand{\theoremname}{testing}
\newenvironment{named}[1]{\renewcommand{\theoremname}{#1}
        \begin{namedtheorem}}
        {\end{namedtheorem}}
\title[On the geography and botany of knot Floer homology] 
{On the geography and botany of knot Floer homology}
\date{\today}
\author[Matthew Hedden]{Matthew Hedden}
\thanks{The first author was partially supported by NSF Grants DMS-0706979 DMS-0906258, CAREER DMS-1150872, and an A.P. Sloan Research Fellowship.}
\address {Department of Mathematics, Michigan State University, MI, USA}
\email {mhedden@math.msu.edu}
\author[Liam Watson]{Liam Watson}
\thanks{The second author was partially supported by a Marie Curie Career Integration Grant (HFFUNDGRP).}
\address{University of Glasgow, School of Mathematics and Statistics, Glasgow, United Kingdom \newline {\em Current address:} Universit\'e de Sherbrooke, D\'epartement de Math\'ematiques, QC, Canada}
\email{liam.watson@usherbrooke.ca}
\begin{document}

\begin{abstract} This paper explores two questions: $(1)$ Which bigraded groups arise as the knot Floer homology of a knot in the three-sphere? $(2)$ Given a knot, how many distinct knots share its Floer homology?  Regarding the first, we show there exist bigraded groups satisfying all previously known constraints of knot Floer homology which do not arise as the invariant of a knot.  This leads to a new constraint for knots admitting lens space surgeries, as well as a proof that the rank of knot Floer homology detects the trefoil knot.  For the second, we show that  any non-trivial band sum of two unknots gives rise to an infinite family of distinct knots with isomorphic knot Floer homology.  We also prove that the fibered knot with identity monodromy is strongly detected by its knot Floer homology, implying that Floer homology solves the word problem for mapping class groups of  surfaces with non-empty boundary.  Finally, we survey some conjectures and questions and, based on the results described above, formulate some new ones.
  \end{abstract}

\maketitle

\section{Introduction}
The Alexander polynomial is a classical invariant of knots \cite{Alexander}.  For a knot $K\subset S^3$, it is a Laurent polynomial $\Delta_K(t)\in \Z[t,t^{-1}]$ satisfying the conditions 
\begin{eqnarray}
& \Delta_K(t)=\Delta_K(t^{-1}) \\
& \Delta_K(1)=1. 
\end{eqnarray}
It is well-known that these  conditions characterize Alexander polynomials completely.
\begin{prop}[see, for example, {\cite[Theorem 8.13]{BZ}}] \label{geog} 
For any $p(t)\in \Z[t,t^{-1}]$ satisfying $(1)$ and $(2)$, there is a knot $K$ with $\Alex=p(t)$.
\end{prop}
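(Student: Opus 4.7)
The plan is to realize $p(t)$ via a Seifert matrix construction. Recall that if a knot $K \subset S^3$ admits a Seifert surface $\Sigma$ of genus $g$, then there is a Seifert matrix $V \in M_{2g}(\Z)$ such that $\Alex \doteq \det(V - tV^T)$, where $\doteq$ denotes equality up to a unit $\pm t^k$. The argument splits into two parts: (a) algebraically, produce an integer matrix $V$ whose associated Alexander polynomial matches $p(t)$; (b) topologically, realize any such $V$ as the Seifert matrix of an actual knot in $S^3$.

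For step (a), first normalize by writing $p(t) = t^{-g} q(t)$, where $q(t) \in \Z[t]$ is a palindromic polynomial of degree $2g$ with $q(1) = 1$. It then suffices to find a $2g \times 2g$ integer matrix $V$ with $\det(tV - V^T) = q(t)$. Specializing $t = 1$ automatically yields $\det(V - V^T) = q(1) = 1$, which is precisely the unimodularity that must hold for a genuine knot (rather than link) boundary. The matrix $V$ can be constructed by induction on $g$: the base case $g = 0$ gives the unknot, and for the inductive step one augments a smaller matrix by a suitable $2 \times 2$ block that absorbs the extremal coefficients of $q$, leaving a shorter palindromic polynomial with normalization preserved; alternatively, one can work directly with an upper-triangular tridiagonal $V$ and solve recursively for the diagonal entries using the tridiagonal determinant recursion, exploiting the palindromic symmetry to guarantee integrality.

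For step (b), one uses the standard disk-with-bands construction: embed a disk $D^2 \subset S^3$ and attach $2g$ bands, prescribing their framings (self-linkings) and pairwise linking numbers to match the entries of $V$; this realizes $V$ as the Seifert matrix of the resulting spanning surface $\Sigma$. The unimodularity of $V - V^T$, inherited from (a), guarantees that $\partial \Sigma$ is connected, hence a knot $K$ with $\Alex \doteq q(t) = t^g p(t) \doteq p(t)$. The main obstacle lies in step (a): carrying out the recursion so that integrality of entries, the palindromic structure of the remaining polynomial, and the normalization $q(1)=1$ are all simultaneously preserved at every stage. Once this bookkeeping is arranged, which is the essential content of \cite[Theorem 8.13]{BZ}, the topological realization in (b) is routine.
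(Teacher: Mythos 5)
The paper gives no proof of this proposition at all---it is stated as a classical fact with a citation to Burde--Zieschang---and your Seifert-matrix realization (normalize $p$, produce an integral $V$ with $\det(tV-V^{T})\doteq p(t)$, then realize $V$ by a disk with $2g$ bands, unimodularity of $V-V^{T}$ ensuring the boundary is a knot) is exactly the standard argument behind that citation, so your proposal is correct in outline and takes essentially the same route. The only caveat is that the algebraic step you sketch and then defer to the reference (constructing $V$ by the block/tridiagonal recursion while keeping integrality, symmetry, and the normalization at $t=1$) is the real content of the cited theorem, so as written your argument leans on the very result being proved rather than carrying that bookkeeping out.
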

\noindent Unfortunately, the Alexander polynomial does not separate knots very effectively
\begin{prop}\label{bot} For any $p(t)\in \Z[t,t^{-1}]$ satisfying $(1)$ and $(2)$, there exist infinitely many distinct $K$, each with  $\Alex=p(t)$.
\end{prop}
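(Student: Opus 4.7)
The plan is to combine Proposition \ref{geog} with a connect-sum operation that preserves the Alexander polynomial.

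First, I would apply Proposition \ref{geog} to fix a knot $K_0 \subset S^3$ with $\Delta_{K_0}(t) = p(t)$. Next, I would exhibit an infinite family $\{J_n\}_{n \geq 1}$ of pairwise distinct knots, each with trivial Alexander polynomial $\Delta_{J_n}(t) = 1$. A convenient source is untwisted Whitehead doubles: fix an infinite family of distinct companions, for instance the torus knots $T_{2,2n+1}$, and let $J_n$ denote the untwisted Whitehead double of $T_{2,2n+1}$. It is classical that untwisted Whitehead doubles have Alexander polynomial $1$, since the genus-one Seifert surface built from the clasp has a Seifert matrix whose Alexander-polynomial computation collapses to $1$.

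Second, I would form $K_n := K_0 \mathbin{\#} J_n$. Multiplicativity of the Alexander polynomial under connect sum then yields $\Delta_{K_n}(t) = \Delta_{K_0}(t) \cdot \Delta_{J_n}(t) = p(t)$, as required. To establish pairwise distinctness of the $K_n$, I would invoke Schubert's uniqueness theorem for prime decomposition of knots: it yields the cancellation law $K_0 \mathbin{\#} J_m \cong K_0 \mathbin{\#} J_n \Longrightarrow J_m \cong J_n$, so pairwise distinctness of the $K_n$ reduces to that of the $J_n$.

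The step most deserving of care is the distinctness of the $J_n$ themselves, which is where I expect whatever genuine content of the proof to sit, although it too is classical: the JSJ decomposition of $S^3 \setminus J_n$ contains a companion torus bounding a solid-torus piece whose core recovers $T_{2,2n+1}$, and since distinct torus knots have non-homeomorphic complements, the Whitehead doubles $J_n$ are pairwise non-isotopic. With this in hand, the previous two steps assemble into the desired infinite family of distinct knots sharing Alexander polynomial $p(t)$.
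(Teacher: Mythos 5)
Your argument is correct and follows essentially the same route as the paper, which also produces the family by connect-summing a knot realizing $p(t)$ (via Proposition \ref{geog}) with distinct Alexander-polynomial-one knots, specifically Whitehead doubles of torus knots, and distinguishes the results by uniqueness of prime decomposition. Your added details (multiplicativity of $\Delta$ under connected sum, Schubert cancellation, and recovering the companion torus knot from the JSJ decomposition to separate the doubles) are exactly the standard facts the paper leaves implicit.
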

Indeed, given a knot with $\Alex=p(t)$, we can find infinitely many distinct knots by the connected sum of $K$ with distinct knots having Alexander polynomial $1$, for example, Whitehead doubles of torus knots (see Rolfsen \cite{Rolfsen1976}).  With  more work one can produce infinitely many distinct hyperbolic  (in particular, prime) knots with any possible Alexander polynomial \cite[Theorem 8.1]{Stoimenow}.  

\ons's Floer homology theory gives rise to a {\em categorification} of the Alexander polynomial  \cite{Knots, RasThesis} in 
that it provides  a collection of bigraded  abelian groups 
\[\HFKa(K)=\bigoplus_{m,a\in\Z} \HFKa_m(K,a)\]
 whose graded Euler characteristic is the Alexander polynomial:
\begin{equation} \Alex =  \sum_{a\in\Z} \Big(  \sum_{m\in\Z}  (-1)^m \mathrm{dim} \HFKa_m(K,a)\Big) \cm t^a \end{equation}
These groups are the {\em knot Floer homology} groups of $K$ (over the field $\F=\Z/2\Z$) and, taken together, they provide an example of a {\em knot homology theory} (in the sense suggested by Rasmussen \cite{Rasmussen2005}).  They were originally defined using ideas from symplectic geometry --- specifically, the theory of pseudo-holomorphic curves ---  but have since been shown to admit a combinatorial definition \cite{MOS,MOST}.  

It is natural to ask how properties $(1)$ and $(2)$ manifest in knot Floer homology and, more generally, how any property of the Alexander polynomial can be homologically interpreted (see Section \ref{sec:back} for more details in this direction).   Property $(1)$ is reflected by a symmetry among the knot Floer homology groups:
 
\begin{named}{Symmetry}[{\cite[Proposition 3.10]{Knots}}] The knot Floer homology groups satisfy
\begin{equation} \label{eq:jsym} \HFKa_m(K,a)\cong \HFKa_{m-2a}(K,-a)\end{equation}
where $m\in\Z$ is the Maslov grading and $a\in\Z$ is the Alexander grading.
\end{named}

We say a bigraded collection of groups is {\em symmetric} if it satisfies \eqref{eq:jsym}. Property (2) is somewhat more subtle, and arises from the definition of the knot Floer homology groups as the associated graded groups of a filtered chain complex which computes the Heegaard Floer homology of $S^3$.  It is expressed as follows.

\begin{named}{Canceling differential}
There is an endomorphism \[\partial_K\co \HFKa_m(K,a) \rightarrow \underset{a'<a}\oplus\HFKa_{m-1}(K,a')\]
which satisfies $\partial_K^2= 0$.  The homology of the resulting chain complex is given by:
\[ H_*(\HFKa(K),\partial_K) \cong \begin{cases}
 \F & \textrm{if} \ m=0 \\
 0 & \textrm{otherwise}.
\end{cases}\]
\end{named}

If a collection of groups can be endowed with such an endomorphism,   we say that it is equipped with a {\em canceling differential}; compare Rasmussen \cite[Section 2]{Rasmussen2005}, and see Section \ref{subsec:infinity} for more discussion.

Given these homological lifts of $(1)$ and $(2)$, one can ask about the analogues of Proposition \ref{geog} and Proposition \ref{bot}.  The purpose of this note is to address these  questions.\footnote{This terminology is borrowed from complex surface theory.}  

\begin{named}{Geography Question}
Given a symmetric, bigraded collection of abelian groups $G$ equipped with a canceling differential $\partial_G$, does there exist a knot $K$ with $(\HFKa(K),\partial_K)\simeq (G,\partial_G)$?
\end{named}

\begin{named}{Botany Question}
For  a knot $K$, how many $J$ exist with $(\HFKa(K),\partial_K)\simeq (\HFKa(J),\partial_J)$?\end{named}

\subsection{Botany} Of the two questions, botany seems more tractable.  Indeed, a few notable results indicate that the knot Floer invariants are far more faithful than the Alexander polynomial.  The first, proved by \ons\ is that if $\HFKa(K)\cong \F$ then $K$ must be the unknot \cite{GenusBounds}. Ghiggini later extended this detection to the trefoil and figure eight knots \cite{Ghiggini2007}.  Presently, these are the only knots in the $3$-sphere known to be detected by Floer homology.  Finding knots for which the botany problem has a finite answer has interesting topological ramifications. For example, the results of \ons\ combined with Ghiggini's work had as corollary Dehn surgery characterizations of the unknot, trefoils, and figure eight.  The Berge conjecture on which knots admit lens space surgeries has been translated into a finiteness conjecture for the botany problem of simple knots in lens spaces \cite{BGH,LensMe,Ras2007}, and recent work of  Li and Ni \cite{LN2013}  similarly reformulate finite filling questions in terms of botany conjectures.

In the opposite direction, it is easily seen that  knot Floer homology does not distinguish all knots.  For instance, the Floer homology of an alternating knot is determined by its Alexander polynomial and signature \cite{AltKnots} and one can easily produce distinct alternating knots sharing these invariants.  Bankwitz's theorem \cite{Bankwitz}, however, states that the number of crossings  in a reduced alternating diagram is bounded above by $|\Delta_K(-1)|$, which  implies there are only finitely many distinct alternating knots with a common Alexander polynomial (c.f.  \cite[Proposition 47]{MooreThesis}).  Thus these examples do not preclude the possibility that the answer to the botany question is always finite.   Our first result indicates that knot Floer homology, like the Alexander polynomial, is quite far from a complete knot invariant. Indeed, it says that any non-trivial band sum of two unknots gives rise to an infinite family of distinct knots with identical Floer homology.  See Figure \ref{fig:bandsum} for an example.

\begin{figure}[ht!] \centering
\labellist 
\pinlabel $i$ at 198 137
\pinlabel $+1$ at 97 30
\pinlabel $=$ at 198 30
\pinlabel $\overbrace{\phantom{aaaaaaaaaaaaaaaaaaaaaa}}$ at 198 73
	\endlabellist
\includegraphics[scale=0.4]{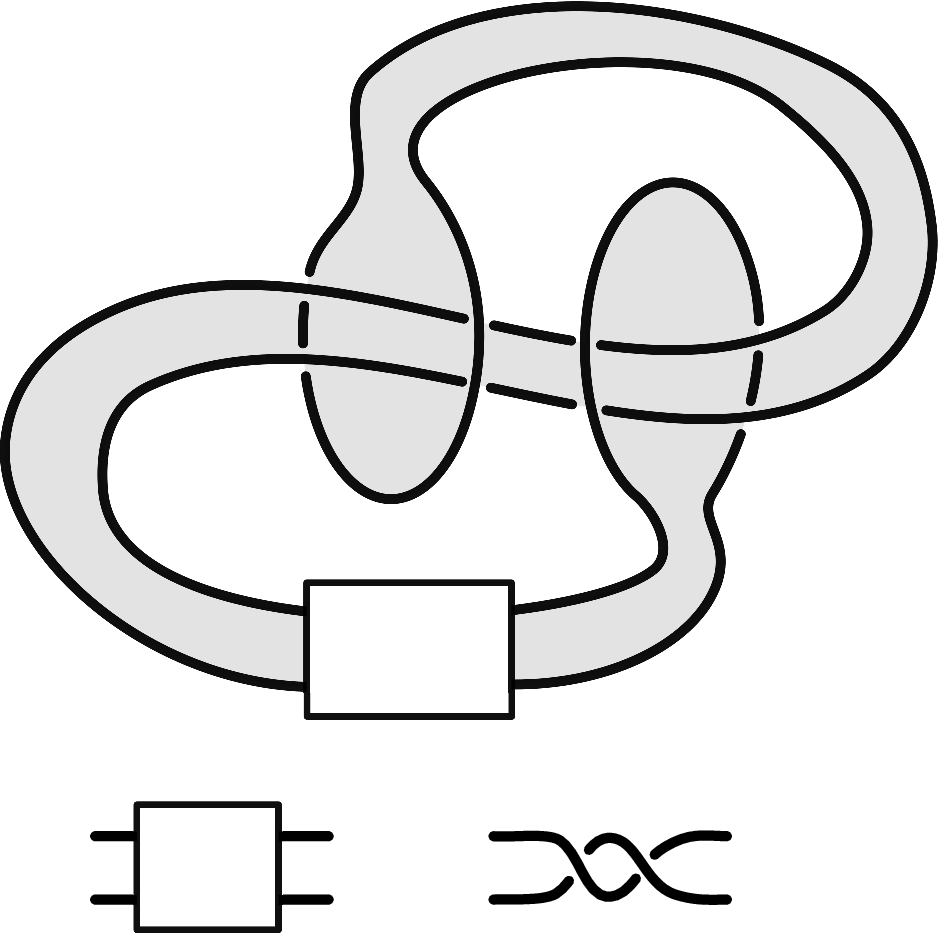}
\caption{The knot $K_i$ is a band sum of two unknots with $i$ full twists placed in the band. The template knot $K=K_0$ in this example is the knot $6_1$ in Rolfsen's table \cite{Rolfsen1976}.  The shading indicates a ribbon immersed disk bounded by  knots in the family. This disk can be resolved to an embedded disk in the $4$-ball with two radial local minima and no local maxima.}\label{fig:bandsum}
\end{figure}

\begin{thm}  \label{thm:ribbon} Let $K$ be a non-trivial knot obtained as a band sum of two unknots, and let $K_i$ be the knot obtained by adding $i$ full twists to the band, so that $K_0=K$.  Then we have
 \vspace{-3.5mm}
\begin{itemize}
\item[(i)] $\HFK(K_i)\cong\HFK(K_j)$ for all $i,j\in \bZ$; and
\item[(ii)] $K_i\not \simeq K_j$ if $i\ne j$.
\end{itemize}
\end{thm}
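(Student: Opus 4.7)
The plan for part (i) is to realize the twist family as surgeries on a fixed auxiliary unknot. I would identify an unknotted curve $U \subset S^3 \setminus K$ encircling the twist region so that $K_i$ is the image of $K = K_0$ under $-1/i$-surgery on $U$. Because the two strands through the band carry opposite orientations, $\mathrm{lk}(U,K) = 0$, and the ribbon disk $D_K$ for $K$ shown in Figure \ref{fig:bandsum} exhibits $U$ as bounding a disk $D_U \subset B^4$ disjoint from $D_K$. Consequently, every $K_i$ continues to bound a ribbon disk in $B^4$ with the same two-minima-no-maxima handle structure. To promote this geometric picture to the isomorphism $\HFKa(K_i) \cong \HFKa(K_j)$, I would apply the surgery exact triangle for $\pm 1/i$-surgery on $U$ and show that the relevant connecting map vanishes precisely because $U$ bounds a disk in $B^4 \setminus D_K$; equivalently, one can encode the Dehn twist along $U$ via the bordered Floer homology of the complement of $U$ and verify that its action on the relevant bimodule is the identity. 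Either way, the isomorphism extends to the full filtered complex, so the canceling differential is preserved as well.

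For part (ii), the obstacle is precisely that $\HFKa$ is, by part (i), constant on the family, so to distinguish the $K_i$ I must appeal to an invariant outside of knot Floer homology. The plan is geometric: because $K$ is non-trivial, the curve $U$ cannot bound a disk in $S^3 \setminus K$---such a disk would render the twist trivial and force all $K_i$ to coincide with $K$. It follows that $S^3 \setminus (K \cup U)$ admits hyperbolic pieces in its JSJ decomposition, and in generic situations it is fully hyperbolic. Thurston's hyperbolic Dehn surgery theorem then implies that $S^3 \setminus K_i$ is hyperbolic for all but finitely many $i$, with hyperbolic volumes converging to $\mathrm{vol}(S^3 \setminus (K \cup U))$ in a strictly monotonic manner; this yields pairwise distinct $K_i$ for $|i|$ large. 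For the finitely many remaining indices, I would appeal to a classical non-Floer invariant---for instance the Jones polynomial, or a Casson--Gordon-style invariant extracted from a non-cyclic cover---which can be shown to vary with the twist parameter under the hypothesis that $K$ is non-trivial.

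The hardest step will be completing part (ii). The Heegaard Floer machinery gives (i) in a principled way via four-dimensional cobordism arguments, but knot Floer homology is by design insensitive to the distinction needed in (ii), and Thurston's theorem alone yields only generic distinctness. The crux is to rule out the residual exceptional fillings, either via hyperbolic invariants that are monotonic in $i$ or by invoking a finer topological invariant that varies injectively with the twist parameter across the entire family. A secondary technical point in part (i) is to make precise the vanishing argument in the surgery triangle: this is where the four-dimensional disjointness $D_U \cap D_K = \emptyset$ plays its essential role, distinguishing this situation from a generic twist along a zero-linking curve.
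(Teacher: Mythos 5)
Your proposal has genuine gaps in both halves. For (i), the entire content is hidden in the assertion that the connecting map in the surgery triangle for twisting along $U$ vanishes ``because $U$ bounds a disk in $B^4\smallsetminus D_K$.'' No mechanism is offered for converting that four-dimensional disjointness into vanishing of a map in an exact triangle, and it is not a theorem; likewise the bordered-Floer version of the claim cannot be literally true, since the twisting bimodule does not act as the identity (the $K_i$ are genuinely distinct knots, which is exactly part (ii)). In the relevant triangle the third term is the knot Floer homology of the image of $K$ in $S^3_0(U)\cong S^1\times S^2$ (or, in the paper's skein sequence, of the two-component unlink obtained by cutting the band), and the maps to and from it do \emph{not} vanish: the paper's argument instead identifies the third term explicitly, invokes the $\HFKm$ structure theorem $\HFKm(K_i)\cong\F[U]_{\{-2\tau,-\tau\}}\oplus\mathrm{Tor}_i$, uses ribbonness only through $\tau(K_i)=0$ (via the four-ball genus bound — this is where the slice-disk picture actually enters), and then uses $U$-equivariance in very negative Alexander gradings to see that the free summands are carried isomorphically, forcing $f^-$ to identify the torsion submodules. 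Note also that you claim the isomorphism extends to the full filtered complex $\CFKinf$; the theorem only asserts (and the paper only proves) an isomorphism of the bigraded groups $\HFKa$ and bigraded $\F[U]$-modules $\HFKm$, and nothing in your sketch would deliver the filtered statement.

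For (ii), the hyperbolic Dehn surgery argument at best distinguishes $K_i$ from $K_j$ for $|i|,|j|$ large and for a fixed band sum (and requires $S^3\smallsetminus(K\cup U)$ to be hyperbolic, which need not hold), so the finitely many residual indices are precisely the problem — and your proposed fallbacks do not close it. By the skein relation one has $V_{K_i}(t)=t^{2(j-i)}(V_{K_j}(t)-1)+1$, so the Jones polynomials of the family are pairwise distinct \emph{unless} they are all trivial; since it is an open question whether a non-trivial knot can have trivial Jones polynomial, ``can be shown to vary'' is exactly what cannot currently be shown with $V_K$. Casson--Gordon-type invariants are also unpromising without further argument, as every $K_i$ is ribbon, hence slice. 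This is why the paper abandons geometric and polynomial invariants and proves Theorem \ref{thm:Khdifferent} with reduced Khovanov homology: an oriented skein long exact sequence shows $\Khred(K_{i+1})$ agrees with $\Khred(K_i)$ up to a $q$-shift away from finitely many gradings, and then Turner's canceling differential, the bound $|\tilde{s}(K_i)|\le g_4(K_i)=0$, and Kronheimer--Mrowka's unknot detection theorem pin down the exceptional gradings and force $\Khred(K_i)\ncong\Khred(K_j)$ for $i\ne j$, with non-triviality of $K$ used exactly where you would need it.
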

By $\HFK$, we mean the statement holds for both $\HFKa$ and $\HFKm$. We have the following topological corollary of Theorem \ref{thm:ribbon}:

\begin{cor}\label{cor:ribbon}
Let $K$ be a non-trivial band sum of two unknots.  Then the knots obtained by adding $i$ full twists to the band are all non-trivial and mutually distinct. Moreover, the  genus of every member in the family is the same, and if one member is fibered then they all are. \hfill\ensuremath$\Box$
\end{cor}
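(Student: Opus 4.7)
The plan is to deduce the corollary directly from Theorem \ref{thm:ribbon} by invoking the standard detection results for knot Floer homology. Since $\HFKa(K_i)\cong \HFKa(K_j)$ for all $i,j$, any topological invariant that is determined by $\HFKa$ must be constant across the family $\{K_i\}_{i\in\Z}$. Part (ii) of Theorem \ref{thm:ribbon} already gives that the $K_i$ are mutually distinct, so the remaining work is to identify three invariants controlled by $\HFKa$: the property of being unknotted, the Seifert genus, and the fibered property.

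First I would handle non-triviality. By hypothesis $K=K_0$ is non-trivial, so by \os's unknot detection theorem (\cite{GenusBounds}, recalled in the introduction above) we have $\HFKa(K_0)\not\cong \F$. Theorem \ref{thm:ribbon}(i) then forces $\HFKa(K_i)\not\cong \F$ for every $i$, and so again by unknot detection each $K_i$ is non-trivial. Next, for the genus claim, I would invoke the theorem that knot Floer homology detects the Seifert genus via the formula
\[ g(K) = \max\bigl\{ a\in\Z \;:\; \HFKa_*(K,a)\neq 0\bigr\}.\]
Since the right-hand side depends only on the isomorphism type of the bigraded group $\HFKa$, it is constant on the family, so $g(K_i)=g(K_j)$ for all $i,j$.

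Finally, for the fibered statement I would appeal to the Ghiggini--Ni fiberedness detection theorem, which asserts that a knot $K$ is fibered if and only if $\HFKa_*(K,g(K))$ has rank one. Both the genus $g(K_i)$ and the rank of the top Alexander grading are determined by $\HFKa(K_i)$, which is constant across the family by Theorem \ref{thm:ribbon}(i); consequently, if one $K_i$ is fibered, then every $K_j$ satisfies the same rank-one condition in its top Alexander grading, hence is fibered as well. There is no real obstacle here beyond correctly cataloguing and citing these detection theorems: the substantive content of the corollary is all loaded into Theorem \ref{thm:ribbon}.
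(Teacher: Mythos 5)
Your proposal is correct and follows essentially the same route as the paper, which deduces the corollary from Theorem \ref{thm:ribbon} together with the genus detection, fibered knot detection, and unknot detection theorems (this is exactly the remark following Theorem \ref{thm:HFKsame}, with distinctness supplied by part (ii) of Theorem \ref{thm:ribbon} via Theorem \ref{thm:Khdifferent}). Nothing is missing.
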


Non-triviality of the knots in the family follows, alternatively, from \cite[Main Theorem]{Scharlemann1985} or \cite[Theorem 1.2]{ST1989}; the latter of these results also implies that all the knots in the family have the same genus.  Note, however, that neither of these works show that the knots in our families are distinct. The tool we use to this end is Khovanov homology (see Theorem \ref{thm:Khdifferent}) --- this appears to be a new topological application of the Khovanov groups.  While it seems likely that the family obtained from any single band sum could be distinguished by other means (e.g. by hyperbolic volume), it seems difficult to handle all band sums  simultaneously.  It would be interesting if purely geometric techniques could be used to prove Corollary \ref{cor:ribbon} and, particularly, the separation result provided by Theorem \ref{thm:Khdifferent}.

The assumption that $K$ is a band sum of  two unknots is equivalent to a 4-dimensional condition; namely, that $K$ bounds a smooth and properly embedded disk in the $4$-ball on which the radius function restricts to a Morse function with no local maximum  and exactly two local minima.   In particular, any such $K$ is smoothly slice.  One may therefore be tempted to think that the lack of faithfulness of Floer homology on these knots is somehow a byproduct of their trivial concordance class  (recall that $K$ and $J$ are called {\em concordant} if there is a smooth and properly embedded cylinder in $S^3\times [0,1]$ which connects them).  However, we have following immediate corollary:

\begin{cor}\label{thm:infinite} For any knot $J\subset S^3$, there exist infinitely many distinct knots, each of which has the same concordance class as $J$ and all of which have the same knot Floer homology.\hfill\ensuremath$\Box$
\end{cor}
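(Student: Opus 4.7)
The plan is to reduce the corollary to Theorem \ref{thm:ribbon} by connect-summing the fixed knot $J$ with the infinite family of knots produced there. First I would fix a non-trivial band sum of two unknots $K$ (the knot $6_1$ shown in Figure \ref{fig:bandsum} works) and form the associated family $\{K_i\}_{i\in\Z}$ of twisted band sums guaranteed by Theorem \ref{thm:ribbon}. By that theorem, the $K_i$ are pairwise distinct and all share the same knot Floer homology.

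Next I would observe two structural facts about this family. First, every $K_i$ is a band sum of two unknots, and such a knot bounds an obvious ribbon disk in the four-ball (the shaded immersed disk in Figure \ref{fig:bandsum} pushes into $B^4$ to an embedded disk); hence each $K_i$ is smoothly slice and is therefore concordant to the unknot. Second, knot Floer homology satisfies a Künneth formula for connected sums, so $\HFK(J\#K_i)\cong \HFK(J)\otimes\HFK(K_i)$, and by Theorem \ref{thm:ribbon}(i) this tensor product is independent of $i$.

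I would then set $J_i := J\# K_i$ and verify the three required properties of the family $\{J_i\}$. Each $J_i$ has the same knot Floer homology by the Künneth observation above. Each $J_i$ is concordant to $J$ because concordance respects connected sum and $K_i$ is concordant to the unknot, so $J\# K_i$ is concordant to $J\#\mathrm{unknot}=J$. Finally, to see the $J_i$ are pairwise distinct, I would invoke the uniqueness of prime decomposition of knots under connected sum (Schubert): since the prime factors of $J\# K_i$ are those of $J$ together with those of $K_i$, an equality $J_i\simeq J_j$ would force $K_i\simeq K_j$, contradicting Theorem \ref{thm:ribbon}(ii).

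There is no substantive obstacle to this argument given Theorem \ref{thm:ribbon}; the only points requiring a sentence of justification are the Künneth formula for $\HFK$ under connected sum and the fact that band sums of two unknots are slice. Both are standard and are already implicit in the discussion preceding the corollary.
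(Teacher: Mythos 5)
Your argument is correct and is essentially the paper's own: form $J\#K_i$ with $\{K_i\}_{i\in\Z}$ from Theorem \ref{thm:ribbon}, use the K{\"u}nneth formula for invariance of the Floer homology, sliceness of band sums of unknots for the concordance claim, and uniqueness of prime decomposition to distinguish the knots. The only cosmetic difference is that you fix a particular family (e.g.\ from $6_1$), whereas the paper allows any family produced by the theorem.
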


Indeed, a K{\"u}nneth formula  for  the knot Floer homology of the connected sum of knots \cite[Theorem 7.1]{Knots} (see  Section \ref{sec:back} for a statement), together with the prime decomposition theorem, implies that $\{J\#K_i\}_{i\in\Z}$ is an infinite family of distinct knots concordant to $J$, all of which have the same Floer homology (here $\{K_i\}_{i\in \Z}$ is  any family obtained from our theorem).  We could similarly obtain infinite families of prime knots concordant to $J$ by appealing to the known behavior of knot Floer homology under more general satellite operations \cite{Cabling, Doubling,CablingII, Hom, LOT}.   Since infinite families of knots with identical Floer invariants are so common, one may wonder what allows for the detection of the unknot, trefoil, and figure eight.  For these, and all other  detection theorems for  Floer invariants known to the authors, the key facts have been that Floer homology detects  the minimal genus of embedded surfaces in $3$-manifolds representing a given homology class \cite{NiThurston,GenusBounds}, and whether such surfaces arise as fibers in a fibration of the $3$-manifold over the circle \cite{Ghiggini2007,NiFibered, NiFibered3mfld}.  In the present context, this amounts to the fact that knot Floer homology detects both the genus of a knot and whether it is fibered.  The detection theorems now follow from the paucity of genus one (and zero) fibered knots.

Knot Floer homology also contains  geometric information related to contact structures, and importing this information yields a new detection theorem.  For a graded group we use the notation {\bf top} and {\bf bottom} respectively to indicate the maximal and minimal grading with non-trivial homology.

\begin{thm}\label{thm:identity} Suppose $K\subset Y$ is a knot with irreducible complement for which  $$\operatorname{dim}\HFKa(Y,K,\mathrm{\bf top})=\operatorname{dim}\HFKa(Y,K,\mathrm{\bf bottom})=1.$$ Suppose further that generators of these groups are non-trivial in  $H_*(\HFKa(Y,K),\partial_K)$.  Then $(Y,K)\simeq (\#^{2g}S^1\times S^2, B)$, where $B$ is the unique fibered knot of genus $g=\mathrm{\bf top}$ with monodromy isotopic to the identity, rel boundary.  
\end{thm}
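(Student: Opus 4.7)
The plan is to combine genus and fibered detection in Heegaard Floer homology with the contact-geometric interpretation of the extremal summands of $\HFKa$, arguing that both extremal generators can survive to $\HFa(Y)$ only when the monodromy of the associated open book is trivial.

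First, I would apply the genus detection theorem of \ons\ (extended by Ni to knots in arbitrary three-manifolds) to identify $\mathbf{top}$ with the Seifert genus $g(K)$; set $g = \mathbf{top}$. Since $\operatorname{rk}\HFKa(Y,K,g)=1$ and $Y\setminus K$ is irreducible, Ni's fibered detection theorem then forces $K$ to be fibered. Let $F$ be a genus $g$ fiber surface with one boundary component and $\phi\in\operatorname{MCG}(F,\partial F)$ the monodromy; this yields an open book decomposition of $Y$ with binding $K$, supporting a contact structure $\xi_{(F,\phi)}$.

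Next I would invoke the identification of the extremal knot Floer summands with contact invariants. By \ons, the generator of $\HFKa(Y,K,-g)\cong \F$ represents the contact class $c(\xi_{(F,\phi)}) \in \HFa(-Y)$ under the map induced by the knot filtration (with appropriate orientation conventions). The generator of $\HFKa(Y,K,g)$ admits a parallel interpretation after reversing the fiber orientation, representing the contact class $c(\xi_{(F,\phi^{-1})})$ of the ``mirror'' open book on the appropriately oriented ambient manifold. The hypothesis that both extremal generators survive in $H_*(\HFKa,\partial_K)$ then translates into both contact invariants being non-zero.

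Then I would apply the Honda-Kazez-Mati\'c criterion: if $c(\xi_{(F,\psi)})\neq 0$, then $\psi$ is right-veering. Consequently both $\phi$ and $\phi^{-1}$ are right-veering, equivalently $\phi$ is simultaneously right- and left-veering. Testing this against a system of arcs that cuts $F$ into disks shows that $\phi$ fixes every properly embedded arc up to isotopy rel endpoints, and the Alexander method then forces $\phi$ to be isotopic to the identity rel $\partial F$.

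Finally, with $\phi=\operatorname{id}$, the complement of $K$ is $F\times S^1$, and the Dehn filling producing $Y$ identifies the meridian of $K$ with $\partial F\times\{\mathrm{pt}\}$. A direct handle-decomposition (or Heegaard diagram) calculation identifies the resulting closed manifold with $\#^{2g} S^1\times S^2$ and the knot $K$ with the distinguished fibered knot $B$ of genus $g$ and identity monodromy. The main obstacle is the second step: carefully matching the top-grading generator with a contact invariant (in particular, pinning down the orientation conventions that determine which open book is involved), so that both rank hypotheses become contact-theoretic. Once that identification is secured, the right- versus left-veering dichotomy closes out the argument.
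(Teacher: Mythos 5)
Your steps (1)--(3) are the paper's argument: fibered detection gives the open book with binding $K$ and monodromy $\phi$; the two extremal-grading hypotheses are converted into non-vanishing of the contact classes $c(\xi_K)\in\HFa(-Y)$ and $c(\xi_{\overline{K}})\in\HFa(Y)$; and Honda--Kazez--Mati\'c then forces both $\phi$ and $\phi^{-1}$ to be right-veering, hence $\phi=\mathrm{id}$ rel boundary. The ``orientation bookkeeping'' you flag as the main obstacle is handled in the paper exactly by Mirror Duality \cite[Proposition 3.7]{Knots}: the top group of $\HFKa(Y,K)$ is identified with the bottom group of $\HFKa(-Y,K)$, whose generator is $c(\xi_K)$ by the contact invariance theorem \cite{Contact}, and symmetrically the bottom-group hypothesis gives $c(\xi_{\overline{K}})\ne 0$; non-vanishing gives tightness, and tightness gives right-veering of every supporting open book \cite{HKMVeerI}. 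So that step is not a genuine obstacle, just a point to state carefully.

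The one place your write-up would fail as stated is the final reconstruction of $(Y,K)$. You assert that the Dehn filling recovering $Y$ sends the meridian of $K$ to $\partial F\times\{\mathrm{pt}\}$; but $\partial F\times\{\mathrm{pt}\}$ is the boundary of a page, i.e.\ a Seifert-framed longitude of the binding, and filling $F\times S^1$ along that slope yields $\Sigma_g\times S^1$, not $\#^{2g}S^1\times S^2$. The meridian is instead a slope meeting each page boundary once (the direction transverse to the pages), and pinning down which such slope is the meridian is precisely the delicate point: the complement $S^1\times\Sigma_{g,1}$ alone does not determine the pair, since different fillings along curves dual to the page boundary are not obviously related by a homeomorphism of the complement. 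The paper sidesteps this by observing that the complement of $K$ is homeomorphic to the complement of $B$ in $\#^{2g}S^1\times S^2$ and then invoking Gabai's theorem (as used by Ni \cite{NiSimple}) that knots in $\#^{2g}S^1\times S^2$ are determined by their complements \cite{Gabai6}. Either correct the slope and argue that the fibered-knot structure forces the standard open-book filling, or quote Gabai as the paper does; as written, the last paragraph computes the wrong manifold.
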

\noindent This should be compared with a similar characterization theorem for $B$ due to Ni \cite[Theorem 1.3]{NiSimple}, which says  that there are  exactly $g$ distinct knots in $\#^{2g}S^1\times S^2$ having Floer homology with  rank equal to that of $B$.
By appealing to a construction of \os,  Theorem \ref{thm:identity} implies that knot Floer homology detects links with trivial monodromy.

\begin{cor}\label{cor:links} Suppose $L\subset Y$ is a fibered link whose monodromy is isotopic to the identity, rel boundary.  Then $L$ is detected by its knot Floer homology, $(\HFKa(Y,L),\partial_L)$.
\end{cor}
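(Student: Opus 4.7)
The plan is to reduce Corollary \ref{cor:links} to Theorem \ref{thm:identity} by converting the link $L$ into a knot via an Ozsv\'ath--Szab\'o band-sum construction that preserves both the identity-monodromy fibered structure and the Floer-theoretic data.

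Let $L = L_1 \cup \cdots \cup L_n \subset Y$ be fibered with fiber $F$ and identity monodromy, so that the exterior is diffeomorphic to $F \times S^1$. First I would choose $n-1$ arcs lying in $F$ connecting the components of $L$, attach a $1$-handle to $Y$ along a neighborhood of each pair of arc endpoints, and run the cores of the new handles along these arcs. The result is a knot $\kappa(L) \subset Y' := Y \,\#\, \#^{n-1}(S^1\times S^2)$ that is fibered with fiber $F'$ obtained from $F$ by attaching $n-1$ bands. Because the arcs lie in $F$, the monodromy of $\kappa(L)$ is still isotopic to the identity rel boundary; in particular $F'$ has a single boundary component of genus $g' = g(F) + n - 1$, and $Y' \cong \#^{2g'}(S^1 \times S^2)$.

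Next I would invoke the fact that the construction is faithful on the Floer side: $(\HFKa(Y', \kappa(L)), \partial_{\kappa(L)})$ is determined by $(\HFKa(Y,L), \partial_L)$ up to the standard grading shifts inherited from the $n-1$ auxiliary $S^1 \times S^2$ summands. The number of components $n$ is itself recovered from the Floer data (for instance via the width of the Alexander grading together with the link K\"unneth formula), so any second pair $(Y'', L'')$ with isomorphic Floer invariant has the same number of components and yields a knot $\kappa(L'')$ in the same ambient $3$-manifold with isomorphic knot Floer data. Theorem \ref{thm:identity} then applies to $\kappa(L)$: its complement is $F' \times S^1$, hence irreducible; the extremal Alexander groups have rank one because $\kappa(L)$ is fibered of genus $g'$; and the generators of these extremal groups represent nontrivial classes in $H_*(\HFKa(Y', \kappa(L)), \partial_{\kappa(L)}) \cong \HFa(Y')$ because they are identified with the Ozsv\'ath--Szab\'o contact invariant of the open book $(F', \mathrm{id})$, which supports the standard tight contact structure on $\#^{2g'}(S^1 \times S^2)$. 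Thus Theorem \ref{thm:identity} forces $(Y', \kappa(L)) \simeq (\#^{2g'}(S^1 \times S^2), B)$, and inverting the construction — destabilizing the $n-1$ auxiliary $S^1\times S^2$ summands back into arcs on $F$ — recovers $(Y, L)$ uniquely.

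The step I expect to be the main obstacle is the nontriviality of the extremal generators in $H_*(\HFKa(Y', \kappa(L)), \partial_{\kappa(L)})$: the rank conditions alone are not enough to feed Theorem \ref{thm:identity}, and one must identify the top-Alexander-graded generator with the contact class and then invoke the nonvanishing of the contact invariant for the open book with identity monodromy. A secondary, more bookkeeping-oriented issue is verifying carefully that the component count and the grading shifts behave as claimed, so that an abstract Floer-level isomorphism between $L$ and $L''$ really does promote to one between $\kappa(L)$ and $\kappa(L'')$ in a common ambient manifold.
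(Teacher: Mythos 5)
Your proposal follows essentially the same route as the paper: knotify $L$ via the Ozsv\'ath--Szab\'o construction, observe that $\kappa(L)$ is again fibered with monodromy isotopic to the identity (the paper gets this from Gabai's theorem that a plumbing of fiber surfaces is a fiber surface whose monodromy is the composition of the plumbands'), and feed $\kappa(L)$ into Theorem \ref{thm:identity}. The one simplification the paper makes, which dissolves your bookkeeping worries about grading shifts and recovering the component count, is that $(\HFKa(Y,L),\partial_L)$ is \emph{defined} to be $(\HFKa(Y\#^{|L|-1}S^1\times S^2,\kappa(L)),\partial_{\kappa(L)})$, so an isomorphism of link invariants is literally an isomorphism of the corresponding knot invariants.
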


As a consequence we obtain a new algorithm for determining whether mapping classes are trivial, yielding a different proof of the following well-known result   (c.f. Baldwin-Grigsby \cite{BG2012} for a similar application to braid groups, and Clarkson \cite[Theorem 1]{Clarkson}.  We thank Eli Grigsby for suggesting this to us.
 
\begin{cor}[See {\cite[Theorem 4.2]{FM2012}}, or \cite{Mosher}]\label{cor:word-problem}
The mapping class group of an orientable surface with non-empty boundary has solvable word problem. 
\end{cor}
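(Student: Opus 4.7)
The plan is to reduce the word problem for $\mathrm{MCG}(\Sigma)$ to a finite combinatorial computation, using Corollary~\ref{cor:links} as a detection theorem. Fix a compact orientable surface $\Sigma$ of some genus $g$ with non-empty boundary and a finite generating set for $\mathrm{MCG}(\Sigma)$. Given a word $w$ in these generators representing a mapping class $\phi$, I would first extract a fibered link from $w$: form the mapping torus of $\phi$ and cap off its boundary tori with solid tori whose meridians are the boundary circles of the fiber. This produces a closed oriented three-manifold $Y_\phi$ containing a fibered link $L_\phi$ whose fiber is $\Sigma$ and whose monodromy is $\phi$. When $\phi$ is the identity, $(Y_\phi, L_\phi) = (\#^{2g}S^1\times S^2, B)$, the pair appearing in Theorem~\ref{thm:identity}. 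Because each letter of $w$ realizes an explicit Dehn twist, one can produce algorithmically from $w$ a multi-pointed Heegaard diagram adapted to $L_\phi$, for instance via the open book description associated to $w$.

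Next I would compute $(\HFKa(Y_\phi, L_\phi), \partial_{L_\phi})$ from this diagram. Since knot Floer homology admits combinatorial definitions, extending \cite{MOS,MOST} to arbitrary three-manifolds via nice Heegaard diagrams or the bordered perspective of \cite{LOT}, both the bigraded group and its canceling differential can be read off by a terminating procedure. The analogous data for $(\#^{2g}S^1\times S^2, B)$ is a fixed reference object, computable once and for all. By Corollary~\ref{cor:links}, $\phi$ is trivial rel boundary if and only if $L_\phi$ is isotopic to $B$ inside $\#^{2g}S^1\times S^2$, which by that corollary is in turn equivalent to an isomorphism of the two bigraded groups together with their canceling differentials. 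Comparing these two finite combinatorial objects yields the required decision procedure.

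The main obstacle is ensuring that every step of this pipeline is genuinely algorithmic. Passing from $w$ to a Heegaard diagram for $(Y_\phi, L_\phi)$ via an open book description is routine, but the combinatorial computation of $\HFKa$ in a manifold other than $S^3$, together with the filtration data encoding the canceling differential, requires the more general combinatorial models alluded to above rather than the grid model of \cite{MOS,MOST}. Once this point is granted, the argument is essentially a direct application of Corollary~\ref{cor:links}, with the novelty lying in the fact that Floer homology detects identity monodromy rather than in any new algorithmic content.
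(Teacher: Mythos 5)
Your argument is correct and is essentially the paper's proof: both pass from the word $w$ to the binding of the associated open book, compute the link Floer homology together with its canceling differential algorithmically (the paper via the Sarkar--Wang nice-diagram algorithm, which is your ``nice Heegaard diagrams'' route, after knotifying the binding), and certify triviality of $\phi$ by comparing with the trivial-monodromy model using Corollary \ref{cor:links} and Theorem \ref{thm:identity}. The only slip is cosmetic: when the fiber has $k$ boundary components the knotified binding lives in $\#^{2g+k-1}S^1\times S^2$ rather than $\#^{2g}S^1\times S^2$, but since Corollary \ref{cor:links} applies to links this does not affect the argument.
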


\subsection{Geography} Turning to the geography question, one could initially hope for an answer similar to that for the Alexander polynomial; namely, that {\em every} symmetric bigraded group with canceling differential can be realized as the knot Floer homology of some knot.  The reader familiar with knot Floer homology will immediately point out that  there are further restrictions on knot Floer homology groups coming from their role within the $(\Z\times\Z)$-filtered {\em infinity} version of knot Floer homology.  Thus the correct geography question should posit that the groups extend to a symmetric $(\Z\times\Z)$-filtered complex with canceling differential (see Section \ref{sec:back} for details), and one can then ask whether all groups admitting such an extension arise from some knot.   Our final result indicates that this is not the case.  It is best stated by noting that a bigraded group $(G,\partial_G)$ with canceling differential has a numerical invariant, $\tau(G)\in \Z$, defined as the minimum $a$-grading of any cycle homologous to a generator of $H_*(G,\partial_G)\cong \F$.  In the case of knot Floer homology, this is the definition of the influential \os \ concordance homomorphism $\tau(K)$ \cite{FourBall,RasThesis}.

\begin{thm}\label{thm:obstruction}
Suppose  $G=\underset{m,a\in\Z}\bigoplus G_m(a)$ is a symmetric bigraded group with canceling  differ-\myline ential $\partial_G$ such that  
\begin{itemize} 
\item[(i)] $\tau(G)=\mathrm{\bf top}$ i.e. the generator of $H_*(G,\partial_G)\cong \F$ lies in maximal $a$-grading.  
\item[(i)] $G_{-1}(\mathrm{\bf top})=G_{-1}(\mathrm{\bf top}-1)=0$
\end{itemize}
Then $(G,\partial_G)\ne (\HFKa(K),\partial_K)$ for any knot $K\subset S^3$.
\end{thm}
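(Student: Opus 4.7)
The plan is to argue by contradiction: suppose $(G,\partial_G)\cong (\HFKa(K),\partial_K)$ for some knot $K\subset S^3$. By Ozsv{\'a}th and Szab{\'o}'s genus-detection theorem, the Seifert genus $g(K)$ equals the top Alexander grading of $\HFKa(K)$, so $g(K)=\mathbf{top}$, and hypothesis $(i)$ then reads $\tau(K)=g(K)$. The goal is to use this equality, together with finer structure of the knot Floer complex that is invisible on $(\HFKa,\partial_K)$ alone, to force a non-trivial class in $\HFKa_{-1}(K,g)$ or $\HFKa_{-1}(K,g-1)$, contradicting $(ii)$.

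The key additional input is the minus-flavor invariant $\HFKm(K)$, an $\F[U]$-module. The equality $\tau(K)=g(K)$ produces a distinguished class $\tilde x \in \HFKm_0(K,g)$ at the top of an infinite $\F[U]$-tower, so $U^k\tilde x\neq 0$ in $\HFKm_{-2k}(K,g-k)$ for every $k\geq 0$. I would then pass information between $\HFKm$ and $\HFKa$ via the long exact sequence coming from $0\to \CFKm\xrightarrow{U}\CFKm\to\CFKa\to 0$,
\begin{equation*}
\cdots\to\HFKm_{m+2}(K,a+1)\xrightarrow{U}\HFKm_{m}(K,a)\to\HFKa_{m}(K,a)\to\HFKm_{m+1}(K,a+1)\xrightarrow{U}\cdots.
\end{equation*}
Using the fact that $\HFKm$ vanishes strictly above Alexander grading $g$, hypothesis $(ii)$ translates into sharp restrictions on $\HFKm$: the vanishing $\HFKa_{-1}(K,g)=0$ forces $\HFKm_{-1}(K,g)=0$, while $\HFKa_{-1}(K,g-1)=0$ combined with the tower structure forces $U$ to be injective on $\HFKm_0(K,g)$ and imposes further vanishings on $\HFKm$ in bigradings near $(-1,g-1)$.

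The final and most delicate step is to invoke the $\SpinC$-conjugation symmetry of $\CFKinf(K)$, which lifts the symmetry \eqref{eq:jsym} and exchanges the Alexander and $U$-filtrations on the full complex. Applying this symmetry to the $\F[U]$-tower produces a dual tower in the bottom Alexander gradings whose interaction with $\partial_K$ and the vanishings just obtained forces a non-trivial class in either $\HFKa_{-1}(K,g)$ or $\HFKa_{-1}(K,g-1)$, giving the desired contradiction.

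The main obstacle is precisely this last structural step: abstract symmetric bigraded complexes equipped with a canceling differential can easily satisfy both $(i)$ and $(ii)$ simultaneously, so any working argument must genuinely exploit the interplay of the $\F[U]$-module structure on $\HFKm(K)$ with the conjugation symmetry of $\CFKinf(K)$. A clean execution will likely proceed by fixing a concrete model for $\CFKinf(K)$ (or equivalently, a reduced model for the filtered complex) in which the tower and its conjugate partner can be written down explicitly, and then running a careful grading-count to identify the unavoidable generator in the forbidden bigrading.
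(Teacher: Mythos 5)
Your proposal stalls exactly where you say it does, and the gap is not a technical one that a more careful grading count could close: no argument that uses only the internal algebraic structure of the knot Floer package ($\HFKa$ with its canceling differential, the $\F[U]$-module $\HFKm$, the long exact sequence relating them, and the conjugation/reversal symmetry of $\CFKinf$) can prove this theorem. The ``shifted trefoil'' complex in Figure \ref{fig:trefoil} (right) is a perfectly good abstract infinity complex --- finitely and freely generated over $\F[U,U^{-1}]$, symmetric, conjugation-symmetric, with globally canceling differential --- whose hat slice satisfies both $(i)$ and $(ii)$; indeed, Section \ref{sub:infinity} notes its $\epsilon$-class is even realized by a difference of honest knot complexes. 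So hypotheses $(i)$ and $(ii)$ are consistent with every constraint your strategy has at its disposal, and the ``dual tower'' you hope conjugation symmetry will produce simply does not force anything into Maslov grading $-1$. (Separately, your tower is misplaced: with the conventions here the structure theorem puts the infinite $\F[U]$-tower of $\HFKm$ in bigrading $(-2\tau,-\tau)=(-2g,-g)$, so all of $\HFKm(K,g)$ is $U$-torsion when $g>0$, and the claimed classes $U^k\tilde x\in\HFKm_{-2k}(K,g-k)$ of infinite order do not exist.)

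The missing ingredient is genuinely three- and four-dimensional input that sees more than the bigraded data. The paper's proof first uses $\tau(K)=g$ together with the four-ball genus bound to get $g_4(K)=g$, then feeds the large-$n$ surgery formula: $\HFp(S^3_n(K),\spinc_{g-2})$ is the homology of the ``hook'' quotient $A_{g-2}=\CFKinf(K)/\Filt^{-1}(\{i<0\},\{j<g-2\})$. Filtering $A_{g-2}$ by the $i$-coordinate, hypothesis $(i)$ supplies a generator-representing cycle in Alexander grading $g$ whose $U$-translates survive to $E_2$ down to filtration level $i=-2$, where the class has Maslov grading $-4$; hypothesis $(ii)$ says there are no chains of Maslov grading $-3$ in the $i=-1$ column of the hook, so this class survives the spectral sequence and is in the image of $U^d$ for all $d$. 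Translated into correction terms this gives $\overline{h}_{g-2}(K)\ge 2$, contradicting Rasmussen's inequality $h_{g-2}(\overline{K})\le\lceil(g_4-|g-2|)/2\rceil=1$. It is this last inequality --- a constraint on $d$-invariants of surgeries coming from the four-ball genus --- that does the work your conjugation-symmetry step was asked to do, and it is invisible to any purely chain-level symmetry argument.
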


The theorem gives rise to a classification of knot Floer homology groups of rank $3$:  They are exactly the groups of the (right- or left-handed) trefoil.  Combined with Ghiggini's theorem we therefore have an improved detection theorem for the trefoil.

\begin{cor} \label{cor:trefoil}
If $K\subset S^3$ satisfies $\operatorname{dim}\HFKa(K)=3$ then $K$ is a trefoil.
\end{cor}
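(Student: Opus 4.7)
The plan is to classify all symmetric bigraded groups of total rank $3$ equipped with a canceling differential, then invoke Theorem \ref{thm:obstruction} (possibly after passing to the mirror) to exclude every such group except the Floer homology of a trefoil, and finally apply Ghiggini's detection theorem \cite{Ghiggini2007} to identify $K$ with a trefoil.

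First I would pin down the shape of the bigraded group. The symmetry $\HFKa_m(K,a)\cong\HFKa_{m-2a}(K,-a)$ pairs generators off the diagonal $a=0$, and since the total rank is odd at least one generator must sit in Alexander grading $0$. If all three did, the canceling differential would be forced to vanish (it strictly decreases $a$), contradicting rank-$1$ homology. So the three generators $x,y,z$ occupy bigradings $(m_+,a_1)$, $(m_0,0)$, $(m_+-2a_1,-a_1)$ for some integer $a_1\geq 1$. The differential $\partial_K$ must have rank $1$; since $\partial_K z=0$ automatically and Maslov grading drops by $1$, a short case check shows that the only options are \emph{top cancellation} $\partial_K x=y$ (forcing $m_0=m_+-1$) or \emph{bottom cancellation} $\partial_K y=z$ (forcing $m_0=m_+-2a_1+1$); both cancellations occurring simultaneously would demand $a_1=\frac{1}{2}$ or violate $\partial_K^2=0$. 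Requiring the surviving cycle to sit in Maslov grading $0$ then pins down every grading in terms of $a_1$.

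Next I would apply the obstruction. In the bottom cancellation case the survivor is $x$, giving $\tau(G)=a_1=\mathrm{\bf top}$; the only generator at $a=a_1$ lies in Maslov grading $0$, and for $a_1\geq 2$ there is no generator at $a=a_1-1$ at all. Thus $G_{-1}(\mathrm{\bf top})=G_{-1}(\mathrm{\bf top}-1)=0$ and Theorem \ref{thm:obstruction} forbids $G\cong\HFKa(K)$ for any knot $K$. In the top cancellation case, the bigrading flip $(m,a)\mapsto(-m,-a)$ exchanges $\HFKa(K)$ with $\HFKa(\bar K)$ and turns top cancellation for $K$ into bottom cancellation for $\bar K$ with the same $a_1$, so applying Theorem \ref{thm:obstruction} to $\bar K$ rules out $a_1\geq 2$ here too. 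We are forced into $a_1=1$, in which case $(\HFKa(K),\partial_K)$ is precisely the knot Floer homology of a right- or left-handed trefoil, and Ghiggini's theorem \cite{Ghiggini2007} concludes that $K$ is a trefoil.

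The main obstacle is the case analysis underlying the classification: enumerating the rank-$1$ differentials compatible with the Maslov drop and $\partial_K^2=0$, and handling the top cancellation case via the mirror so that Theorem \ref{thm:obstruction} can be brought to bear.
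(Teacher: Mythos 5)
Your proposal is correct and follows essentially the same route as the paper: use symmetry and the canceling differential to reduce to two rank-$3$ configurations (which are mirrors of each other), apply Theorem \ref{thm:obstruction} --- passing to $\overline{K}$ in the top-cancellation case --- to exclude all genera $\ge 2$, and finish with Ghiggini's theorem in the genus-one case. The only cosmetic difference is that you rule out the ``all three generators in Alexander grading $0$'' configuration by noting the differential would vanish, whereas the paper invokes genus detection and unknot detection; both work.
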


 Theorem  \ref{thm:obstruction} also leads to a new constraint on knots admitting surgeries with simple Floer homology. Recall that an L-space is a rational homology sphere $Y$ with simplest possible Heegaard Floer homology, in the sense that $\rk \HFa(Y) = |H_1(Y;\Z)|$; knots admitting non-trivial L-space surgeries are referred to as L-space knots. \ons \ showed that there are stringent restrictions on the knot Floer homology of an L-space knot \cite[Theorem 1.2]{OSz2005-lens} and these, in turn, place restrictions on the Alexander polynomial \cite[Corollary 1.3]{OSz2005-lens}; namely, the coefficients of the Alexander polynomial take values in $\{-1,0,1\}$. Combining their theorem with Theorem \ref{thm:obstruction} we have:

\begin{cor}\label{cor:L-space-knot} If $K$ is an L-space knot then the second highest Alexander grading of its knot Floer homology is non-trivial, and the  Alexander polynomial takes the form $$\Delta_K(t)= t^g - t^{g-1} \cdots - t^{1-g} + t^{-g},$$ where $g$ denotes the Seifert genus of $K$. In particular, the coefficient of $t^{g-1}$ is $-1$.  \end{cor}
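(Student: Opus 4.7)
The plan is to derive the corollary from Theorem \ref{thm:obstruction} together with the structure theorem of \ons\ for knot Floer homology of L-space knots \cite[Theorem 1.2]{OSz2005-lens}. First I would recall what that structure theorem delivers: for an L-space knot $K$ of Seifert genus $g$, the group $\HFKa(K,a)$ has rank $0$ or $1$ in each Alexander grading $a$; the non-vanishing Alexander gradings form a symmetric decreasing sequence $n_0 > n_1 > \cdots > n_{2k}$ with $n_0 = g$, $n_{2k} = -g$ and $n_j = -n_{2k-j}$; and the rank-one group in Alexander grading $n_i$ sits in a Maslov grading that decreases strictly as $i$ increases. In particular $\tau(K) = g$ and $\HFKa(K,g) \cong \F$ is concentrated in Maslov grading $0$.

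Next I would apply Theorem \ref{thm:obstruction} contrapositively to $(G, \partial_G) = (\HFKa(K), \partial_K)$. The identity $\tau(K) = g = \mathrm{\bf top}$ verifies hypothesis (i). Because $(\HFKa(K), \partial_K)$ is realized by the actual knot $K$, the obstruction cannot trigger, and hence hypothesis (ii) must fail. The vanishing $\HFKa_{-1}(K,g) = 0$ is immediate from the structure theorem, since the sole generator in Alexander grading $g$ has Maslov grading $0$. The failure of (ii) therefore forces $\HFKa_{-1}(K,g-1) \neq 0$, and the rank-one structure then pins down $\HFKa(K,g-1) \cong \F$ in Maslov grading $-1$. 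This already proves the first assertion that the second-highest Alexander grading is non-trivial, and simultaneously identifies $n_1 = g-1$ and shows the Maslov grading of its generator is $-1$.

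To extract the Alexander polynomial, I would take the graded Euler characteristic. The rank-one group in Alexander grading $n_i$ contributes $(-1)^{m_i} t^{n_i}$ where $m_i$ is its Maslov grading; thus the coefficient of $t^g$ is $+1$, the coefficient of $t^{g-1}$ is $-1$, and by the palindromic symmetry $\Delta_K(t) = \Delta_K(t^{-1})$ the coefficients of $t^{-g}$ and $t^{1-g}$ are $+1$ and $-1$ respectively. The remaining intermediate terms have coefficients in $\{-1,0,1\}$ with alternating signs as guaranteed by \cite[Corollary 1.3]{OSz2005-lens}, yielding the stated form of $\Delta_K(t)$.

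There is no serious technical obstacle: the real content is already packaged into Theorem \ref{thm:obstruction} and the OS structure theorem. The only conceptual step is the observation that condition (i) of Theorem \ref{thm:obstruction} holds automatically for every L-space knot (because $\tau$ equals the genus and the top-grading group sits in Maslov grading $0$), which then forces condition (ii) to be violated precisely at the $(g-1)$ level. The main point worth checking carefully is that the structure theorem really does preclude $\HFKa_{-1}(K,g) \neq 0$, so that the obstruction is relocated to the second-highest grading rather than absorbed by the top one.
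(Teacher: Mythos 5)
Your argument is correct and follows essentially the same route as the paper: verify hypothesis (i) of Theorem \ref{thm:obstruction} via $\tau(K)=g$, use the rank-one structure of \cite[Theorem 1.2]{OSz2005-lens} to see the top group is concentrated in Maslov grading $0$ (so $\HFKa_{-1}(K,g)=0$), and apply the theorem contrapositively to force $\HFKa_{-1}(K,g-1)\neq 0$, with the stated form of $\Delta_K(t)$ then following from the Ozsv\'ath--Szab\'o structure theorem. No gaps to flag.
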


\subsection{Organization}  In Section \ref{sec:back} we recall some background on knot Floer homology, calling attention to its algebraic role within the primary knot invariant from Heegaard Floer theory, the so-called {\em infinity} knot Floer complex.  We then survey  some properties of knot Floer homology, many of which will be used in the proofs that follow. 

Section \ref{sec:ribbon} proves Theorem \ref{thm:ribbon}.  This is achieved by using the skein exact sequence to verify isomorphism between the knot Floer homology groups of knots which differ by twisting along the ribbon disk (Theorem \ref{thm:HFKsame}).  We then briefly develop a similar skein exact sequence for Khovanov homology, and use it to distinguish the knots in our families in Theorem \ref{thm:Khdifferent}.  
  
Section \ref{sec:identity} proves Theorem \ref{thm:identity} by exploiting an invariant of contact structures defined using the knot Floer homology of fibered knots.  This invariant can show that a fibered knot induces a tight contact structure, and the hypotheses of the theorem imply that $K\subset Y$ induces a tight contact structure on both $Y$ and $-Y$.  We then appeal to a result of Honda, Kazez, and Mati\'c which implies that the only fibered knot $K\subset Y$ which induces a tight contact structure on both $Y$ and $-Y$ is the knot $B$ of the theorem.
 
Section \ref{sec:obstruction} proves Theorem \ref{thm:obstruction}.  The key tools for this theorem are a surgery formula relating the knot Floer homology invariants of $K\subset S^3$ to the Floer homology groups of manifolds obtained by integral surgery on $K$, and an inequality of Rasmussen relating a numerical  derivative of these latter Floer groups to the $4$-ball genus. 

Section \ref{sec:conjectures} concludes with a discussion of some conjectures and questions pertaining to the botany question, with emphasis on trying to understand the extent to which L-spaces, and the knots which give rise to them upon surgery, are detected by Floer homology.

\section{A survey of knot Floer homology} \label{sec:back}

This section provides background on the knot Floer homology invariants.  We first discuss the algebraic setting for these invariants; namely, as the associated graded groups of a $\Z$-filtered complex.  This complex, however, can be viewed as a subquotient complex of a $(\Z\times\Z)$-filtered complex, $\CFKinf(K)$, which is the primary knot invariant provided by Heegaard Floer homology.    This latter complex also allows for the definition of the {\em minus} version of knot Floer homology, which categorifies the Milnor torsion of a knot.  After discussing this, we  survey some key properties of the knot Floer invariants, especially those which will be used in the proof of our theorems.  Throughout, we use the notation $\F=\Z/2\Z$   to denote the field with two elements. 

\subsection{The infinity complex, its reduction, and derivatives}\label{subsec:infinity} We will use several variants of the knot Floer homology groups.  Each of these can be derived from  a single  invariant, the {\em infinity complex} of $K$, denoted  $(\CFKinf(K),\partial^\infty)$, which we now discuss.  

To begin,  $\CFKinf$ is a {\em graded, bilfiltered} chain complex, which means that it admits an (infinite) $\F$ basis $\mathcal{B}$ with functions:
\[ m:\mathcal{B}\rightarrow \Z \ \ \text{and}\ \ \Filt:\mathcal{B}\rightarrow \Z\times\Z   \]
called the  {\em Maslov grading} and {\em bifiltration}, respectively, which are compatible with the differential  in the sense that for any  $a,b\in\mathcal{B}$, \begin{equation}\label{m-filt}
 a \in \partial^\infty b\ \implies\ m(a)=m(b)-1\ \ \text{and} \  \  \Filt(a)\le \Filt(b).   
 \end{equation}
Here  $a \in \partial^\infty b$ means that the coefficient of $a$ appearing in the expansion of $\partial^\infty b$  is non-zero, and  $\le$ denotes the partial order on $\Z\times\Z$ given by $(i,j)\le (i',j')$ if $i\le i'$ and $j\le j'$.  Due to the first relation, the Maslov grading is frequently referred to as the homological grading.

While infinitely generated over $\F$, $\CFKinf$ is freely and finitely generated as a module over  $\F[U,U^{-1}]$ by certain collections of intersection points of curves on a Heegaard diagram, which we call {\em generators}.  Let us denote the set of  generators  by $\G$, so that the basis $\mathcal{B}$ over $\F$ is given by  elements  $U^{d}\x$, with $d\in \Z$, $\x\in \G$.   On such elements, the bifiltration is given by:
\[\Filt(U^d\x)=(-d, A(\x)-d)\]
where $A:\mathcal{G}\rightarrow \Z$ is a function defined on generators called the {\em Alexander grading} (we will, more generally, refer to the second coordinate of $\Filt$ as the Alexander grading). Thus the variable  $U$  has  bifiltration $(-1,-1)$.  The Maslov grading is also well-behaved with respect to the $\F[U,U^{-1}]$ module structure, and satisfies
 \[m(U^d\cm \alpha)= m(\alpha)-2d,\]  for any Maslov-homogenous element $\alpha\in\CFKinf$.  
 The differential on $\CFKinf$ counts certain pseudo-holomorphic disks in a symmetric product of a Heegaard diagram.\footnote{Several combinatorial interpretations of this invariant now exist \cite{BaldLev,MOS,MOST,Sing}.}
 
The relation between the bifiltration function and the differential in (\ref{m-filt}) endows $\CFKinf$ with the structure of a $(\Z\times\Z)$-filtered chain complex.  To understand this, let $S(i,j)$ denote the subgroup of $\CFKinf$ generated by basis elements in the set $\Filt^{-1}(\{\le i\}\times \{\le j\})$.  The right half of (\ref{m-filt}) implies that the $S(i,j)$ satisfy
\[ \partial^\infty S(i,j) \subseteq S(i,j), \ \ \text{and} \ \ S(i,j)\subseteq S(i',j') \ \text{if} \ (i,j)\le (i',j')\]
 i.e. that  the $S(i,j)$ are subcomplexes of $\CFKinf$ and of each other, where  inclusion of subcomplexes is governed by the partial order on $\Z\times\Z$.  Clearly the union of all $S(i,j)$ is equal to $\CFKinf$.   A complex equipped with an exhausting sequence of subcomplexes indexed by $\Z\times \Z$ in this way is, by definition, a $(\Z\times\Z)$-filtered complex.   
  The $(\Z\times\Z)$-filtered chain homotopy type of $\CFKinf(K)$ is an invariant of the knot $K$ which was discovered independently by \ons \ \cite{Knots} and Rasmussen \cite{RasThesis}.  
  
It is often convenient to regard  $\CFKinf(K)$  as a collection of basis elements (dots) arranged at integer lattice points in the plane.  Powers of  the variable $U$ act by translation along lattice points lying  along the lines of slope one. The differential can be pictured as a collection of translation invariant arrows which connect basis elements, and which travel down and to the left.  (See Figure \ref{fig:trefoil} for an illustration.) With this picture in mind, the knot Floer homology groups can be recovered from  the  vertical strip  in the plane consisting consisting  of lattice points with $i$-coordinate zero.    Put differently, the subquotient complex $\Filt^{-1}(\{0\}\times \Z)$ of $\CFKinf(K)$  inherits a $\Z$-filtration from the second coordinate of $\Filt$.  The associated graded homology groups of this filtration are the knot Floer homology groups $\HFKa_m(K,a)$, where the Alexander grading corresponds to the filtration index.   

A useful algebraic lemma allows us to consider an often much simpler complex, whilst preserving the $(\Z\times\Z
)$-filtered chain homotopy type of $\CFKinf(K)$:  given a graded, bifiltered complex $(C,\partial)$, one can find another such complex $(\overline{C},\overline{\partial})$ called the {\em reduction} of $(C,\partial)$, which is $(\Z\times\Z)$-filtered chain homotopy equivalent to $(C,\partial)$, but for which  the restriction of $\overline{\partial}$ to the subquotient complex $\Filt^{-1}(i,j)$ vanishes for any pair $(i,j)$ \cite[Lemma 4.5]{RasThesis}. Informally, $\overline{\partial}$ is zero within any given lattice point.  We state and (tersely) prove the lemma for posterity since it has not appeared in this form, though is implicitly used throughout the literature:

\begin{named}{Reduction Lemma} Let $(C,\partial,\Filt)$ be a graded, bifiltered complex, which is freely and finitely generated over $\F[U,U^{-1}]$ as above by a collection of Maslov and Alexander homogeneous generators $\mathcal{G}$, where $U$ has Maslov grading $-2$ and bilfiltration $(-1,-1)$.  Then there is another complex $(\overline{C},\overline{\partial},\overline{\Filt})$, called the {\bf reduction of $(C,\partial,\Filt)$} satisfying:
\begin{enumerate}
\item $(\overline{C},\overline{\partial})$ is $(\Z\times \Z)$-filtered chain homotopy equivalent to $(C,\partial)$ (where filtrations are induced by $\overline{\Filt}$ and $\Filt$, respectively),
\item $(\overline{C},\overline{\partial})$ is generated over $\F[U,U^{-1}]$ by a  subset  $\overline{\mathcal{G}}\subset \mathcal{G}$,
\item The restriction of $\overline{\partial}$ to $\overline{\Filt}^{-1}(i,j)$ is identically zero for any $(i,j)\in\Z\times\Z$.
\end{enumerate}
\end{named}
\begin{proof}  The proof is essentially an equivariant application of the  well-known cancellation lemma (e.g. \cite[Lemma 5.1]{RasThesis}).  More precisely, suppose that in the complex $(C,\partial)$, we have a non-zero term in the restriction of the differential to $\Filt^{-1}(0,j)$:
\[ \y\in \partial \x, \ \ \Filt(\y)=\Filt(\x)=(0,j)\]
where $\y,\x$ are elements in $\mathcal{G}$ (for convenience, we work with the subquotients with first $\Filt$-index zero so that elements from our $\F$ basis are in $\mathcal{G}$).  Then \cite[Lemma 4.1]{HeddenNi}, applied over the ring $R=\F[U,U^{-1}]$, says that we can obtain a new complex $(C',\partial')$ which is freely generated over $\F[U,U^{-1}]$ by $\mathcal{G}\smallsetminus \{\x,\y\}$ which is homotopy equivalent to $(C,\partial)$.   A bifiltered version of \cite[Lemma 4.2]{HeddenNi} implies that we can extend the bifiltration function $\Filt$ to a function $\Filt'$, and the resulting $(\Z\times \Z)$-filtered chain homotopy type is the same as that on $(C,\partial)$ induced by $\Filt$.  We now repeatedly apply the lemma, a sequence which must terminate in a complex for which the restriction of the resulting differential to $\Filt^{-1}(0,j)$ is zero (by finiteness of this subspace).  We repeat for each of the (finite number of) non-zero subquotient complexes $\Filt^{-1}(0,j')$, arriving at a filtered chain homotopy equivalent complex $(\overline{C},\overline{\partial})$ freely generated over $\F[U,U^{-1}]$ by a subset of $\mathcal{G}$, for which the restriction of $\overline{\partial}$ to each subquotient $\Filt^{-1}(0,j)$ is zero.  But this implies that the restriction of $\overline{\partial}$ to each of the subquotients  $\Filt^{-1}(i,j)$ is zero, by free generation of the complex over $\F[U,U^{-1}]$.
\end{proof}

Consider then, the reduction of $\CFKinf(K)$.   Restricting this complex to  $\Filt^{-1}(\{0\}\times \Z)$, we obtain a bigraded chain complex whose groups are isomorphic to $\HFKa(K)$, and with a differential which we denote $\partial_K$.  This allows us to think of the knot Floer homology groups as a chain complex in their own right, with a differential that strictly lowers the Alexander grading.  This is the perspective taken in the introduction; compare \cite[Section 2]{Rasmussen2005}, and see \cite[Sections 4.5 and 5.1]{RasThesis} for more details.  As a final observation,  note that  the reduced complex  is generated as an $\F[U,U^{-1}]$ module by the knot Floer homology groups,  
\begin{equation}\label{generation} \CFKinf(K)\simeq \HFKa(K)\otimes\F[U,U^{-1}]\end{equation}  Of course the differential on $(\CFKinf(K),\partial^\infty)$ is not generated by the differential on $\HFKa(K)$. In general, only the purely vertical components of $\partial^\infty$ are determined by $\partial_K$.  Our discussion now brings us to a more refined version of the geography question:

\begin{named}{Precise Geography Question}
Which $(\Z\times\Z)$-filtered chain homotopy types of  graded bifiltered  complexes arise as $\CFKinf$ complexes of knots in the three-sphere?
\end{named}
To any such complex $(G^\infty,\partial^\infty)$  one can consider its reduction and the associated  {\em hat} complex $({G},\partial_G)$ i.e. the $\Z$-filtered subquotient $\Filt^{-1}(\{0\}\times\Z))$, equipped with its induced differential.  As observed in the introduction, certainly any graded bifiltered chain complex which arises from knot Floer homology has a symmetric hat complex for which the induced differential is canceling.     {\it A priori}, however, these are not the only restrictions.  Indeed, $\CFKinf$ itself has a global canceling differential, in the sense that its homology is isomorphic to $\F[U,U^{-1}]$.  Moreover, each vertical ``slice"  is isomorphic to a shifted version of the knot Floer homology groups
\[\Filt^{-1}(\{i\}\times\Z)) = (\HFKa(K),\partial_K)[2i,i],\]
where the notation on the right means that the Maslov grading  has been shifted up by $2i$, and the Alexander grading by $i$.

In addition to the knot Floer homology groups, we will work with another derivative of $\CFKinf(K)$, the {\em minus} knot Floer homology groups.  These are the associated graded homology groups of the subcomplex $\Filt^{-1}( \Z_{\le0}\times \Z$) (the  $2^{\rm nd}$ and $3^{\rm rd}$ quadrants of the $(i,j)$-plane), again endowed with  a $\Z$-filtration  coming from the second coordinate function.    We  denote these groups $\HFKm_m(K,a)$.  Their graded Euler characteristic satisfies:
\[ \frac{\Alex}{(1-t^{-1})} =  \sum_a \Big(  \sum_m  (-1)^m \mathrm{dim} \HFKm_m(K,a)                \Big) \cm t^a\]
The minus groups inherit an $\F[U]$-module structure from the $U$-action on $\CFKinf$, and this structure determines the hat Floer homology groups through a long  exact sequence for each $a\in\Z$:
\begin{equation*}\begin{tikzpicture}[>=latex] 
\matrix (m) [matrix of math nodes, row sep=1.5em,column sep=2em]
{ \cdots & \HFKm_m(K,a) & \HFKm_{m-2}({K},a-1) &  \HFKa_{m-2}(K,a-1)  & \cdots\\};
\path[->,font=\scriptsize]
(m-1-1) edge[->] (m-1-2)
(m-1-2) edge[->] node[above] {$U$}  (m-1-3)
(m-1-3) edge[->](m-1-4)
(m-1-4) edge[->] node[above] {$\delta$} (m-1-5);
\end{tikzpicture}\end{equation*}
The connecting homomorphism $\delta$ raises both Alexander and Maslov gradings by one.

\subsection{Properties of knot Floer homology}
We now survey some important properties of the knot Floer invariants, many of which will be instrumental in the proofs of our theorems.   We begin with a lift  of  Conway's skein relation for the Alexander polynomial to knot Floer homology, the so-called skein exact sequence.  This exact sequence will be the key tool for the proof of Theorem \ref{thm:ribbon}.

\begin{named}{Skein Exact Sequence}[{\cite[Theorem 1.1]{skein}}]  
Let $\mathcal{K}_+$, $\mathcal{K}_0$, and $\mathcal{K}_-$ be three links, which differ at a single crossing as in Figure \ref{crossing}.  Suppose that the two strands meeting at the distinguished crossing in $\mathcal{K}_+$ belong to the same component, so that in the oriented resolution the two strands correspond to two components, $i$ and $j$, of $\mathcal{K}_0$.  Then there are long exact sequences 
\[\begin{tikzpicture}[>=latex] 
\matrix (m) [matrix of math nodes, row sep=1.5em,column sep=2em]
{ \cdots & \HFKa_m(\mathcal{K}_+,a) &\HFKa_m(\mathcal{K}_-,a) &  \HFKa_{m-1}(\mathcal{K}_0,a) & \cdots\\};
\path[->,font=\scriptsize]
(m-1-1) edge[->] node[above] {$\hat{h}$}(m-1-2)
(m-1-2) edge[->] node[above] {$\hat{f}$}  (m-1-3)
(m-1-3) edge[->] node[above] {$\hat{g}$} (m-1-4)
(m-1-4) edge[->] node[above] {$\hat{h}$} (m-1-5);
\end{tikzpicture}\]
\[\begin{tikzpicture}[>=latex] 
\matrix (m) [matrix of math nodes, row sep=1.5em,column sep=2em]
{ \cdots & \HFKm_m(\mathcal{K}_+,a) & \HFKm_m(\mathcal{K}_-,a) &   H_{m-1}\left(\frac{CFK^-(\mathcal{K}_0)}{U_i-U_j},a\right)& \cdots\\};
\path[->,font=\scriptsize]
(m-1-1) edge[->] node[above] {$h^-$}(m-1-2)
(m-1-2) edge[->] node[above] {$f^-$}  (m-1-3)
(m-1-3) edge[->] node[above] {$g^-$} (m-1-4)
(m-1-4) edge[->] node[above] {$h^-$} (m-1-5);
\end{tikzpicture}\]
The $h$ maps preserve both Maslov and Alexander grading. Moreover, the second sequence is equivariant with respect to the action by $U$. \end{named}

\begin{figure}[ht!] \centering
\labellist 
	\pinlabel $\Kcross_-$ at 31 -7
	\pinlabel $\Kcross_0$ at 127 -7
	\pinlabel $\Kcross_+$ at 226 -7
	\endlabellist
\includegraphics[width=2in]{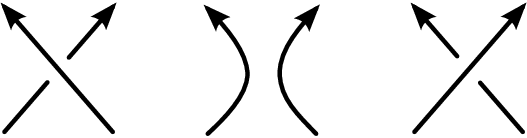}
\caption{Three links forming a skein triple}\label{crossing}
\end{figure}

There is a similar theorem for the case when the two strands belong to different components.  Since the theorem  involves links, we should recall that there are several Floer homology invariants for links.  In the above, the invariants which appear are the hat and minus  {\em knot} Floer homology groups of the link.   These are again bigraded theories, which capture the {\em single} variable Alexander polynomial of the link (as opposed to the {\em multi}-variable Alexander polynomial captured by the {\em link} Floer homology groups):
\[ (t^{1/2}-t^{-1/2})^{n-1} \cm \Delta_L(t) =  \sum_a \Big(  \sum_m  (-1)^m \mathrm{dim} \HFKa_m(L,a) \Big) \cm t^a\]
\[ \Delta_L(t) =  \sum_a \Big(  \sum_m  (-1)^m \mathrm{dim} \HFKm_m(L,a) \Big) \cm t^a.\]
In the first equation $n$ denotes the number of components of $L$.  The minus version is most naturally a module over $\F[U_1,\ldots,U_n]$, where each variable carries Alexander grading $-1$ and Maslov grading $-2$.  For our purposes the key point  about the invariants of links is that  for the two-component unlink \[H_*\Big(\frac{CFK^-(\mathrm{Unlink})}{U_1-U_2}\Big)\cong \F[U]\oplus \F[U] \]with the bi-grading of $1$ in the first summand given by $(m,a)=(0,0)$ and in the second summand by $(m,a)=(-1,0)$.  This can be calculated from directly from a genus zero, four-pointed, Heegaard diagram adapted to the unlink.

It is well known that the Alexander polynomial is insensitive to reflection and orientation reversal:
\[ \Delta_{\overline{K}}(t)=\Delta_K(t)\quad  \mathrm{and}\quad \Delta_{K^r}(t)=\Delta_{{K}}(t) \] where $\overline{K}$ is the mirror image of $K$,  and $K^r$ is $K$ with its orientation reversed.  Knot Floer homology satisfies analogous {\em categorified} versions of these equalities. 

\begin{named}{Mirror Duality}[{\cite[Proposition 3.7]{Knots}}] There is a grading-reversing isomorphism
\[(\HFKa(\overline{K}),\partial_{\overline{K}})\cong  (\HFKa(K),\partial_K)^*\]
where the term on the right is the hom dual complex.  In particular, \[\HFKa_m(\overline{K},a)\cong \HFKa_{-m}(K,-a)\] for all $a,m\in\Z$.\end{named}

\begin{named}{Reversal Insensitivity}[{\cite[Proposition 3.9]{Knots}}] There is a $(\Z\times\Z)$-filtered chain homotopy equivalence
\[\CFKinf(K^r) \simeq \CFKinf(K)\]
and, moreover, the complex on the left may be obtained from the complex on the right by composing $\Filt$ with the map defined by $(i,j)\mapsto (j,i)$.\end{named}

Reversal insensitivity places  strong restrictions on the $(\Z\times\Z)$-filtered homotopy types which can arise from knot Floer homology.  In particular, it implies that the horizontal subquotient complex $\Filt^{-1}(\Z\times \{0\})$ is  $\Z$-filtered homotopy equivalent to $\Filt^{-1}( \{0\}\times \Z)$ i.e. to knot Floer homology equipped with its canceling differential.   In fact, this induced $\Z$-filtered homotopy equivalence is responsible for the symmetry of the knot Floer homology groups mentioned in the introduction.  Indeed, we have \[\Filt^{-1}(0,j)\simeq  \Filt^{-1}(j,0)\simeq U^{j}\Filt^{-1}(0,-j)\]
where the first equivalence is induced by the one at hand, and the second follows from (\ref{generation}).  Now observe that the two ends are isomorphic to $\HFKa_*(K,j)$ (by definition) and  $\HFKa_{*-2j}(K,-j)$ (by the fact that $U^j$ has Maslov grading $-2j$), respectively.

The Alexander polynomial is well-behaved under  satellite operations.  The simplest case of this behavior is the formula for connected sums,  $\Delta_{K_1\#K_2}(t)=\Delta_{K_1}(t)\cm \Delta_{K_2}(t)$.  For knot Floer homology we have:

\begin{named}{K{\"u}nneth Formula}[{\cite[Theorem 7.1]{Knots}}]
\[ \HFKa(K_1\#K_2)\cong \HFKa(K_1)\otimes\HFKa(K_2)\]\end{named}
The above tensor product is taken in the bigraded sense, meaning that for each $m,a\in \Z$, we have \[\HFKa_m(K_1\#K_2,a)\cong \underset{m_1+m_2=m,\  a_1+a_2=a} \bigoplus\HFKa_{m_1}(K_1,a_1)\otimes\HFKa_{m_2}(K_2,a_2)\]
A similar theorem holds for $\HFKm$ but is somewhat more complicated due to the $\operatorname{Tor}$ terms which naturally arise in the context of $\F[U]$-modules.

Floer homology is also well-understood under more general satellite operations \cite{Cabling, Doubling, CablingII, Hom,Levine,Petkova, LOT}.  We discuss these results (and their implications for this work) further in Section \ref{sec:ribbon}.

The Alexander polynomial can be defined using a Seifert surface for a knot.  As such, it is not surprising that it is related to the geometry of such surfaces.  For instance, if we let $a_j$ denote the coefficient of $t^j$ in $\Delta_K(t)$, and $g(K)$ denote the Seifert genus of $K$, is easily shown that:
\[ g(K)\ge \text{deg}\ \Delta_K:=\text{max}\{ j\in \Z \ |\ a_j\ne 0\} \]
\[ a_{g(K)}=\pm 1 \ \text{if K is fibered}\]
 The information knot Floer homology provides about Seifert surfaces is considerably stronger:

\begin{named}{Genus Detection}[{\cite[Theorem 1.2]{GenusBounds}}]
\[g(K) = \mathrm{max} \{a\in\Z \ | \ \HFKa(K,a)\ne 0\}\]
\end{named}

\begin{named}{Fibered Knot Detection}[{\cite[Theorem 1.1]{NiFibered}, cf. \cite{Ghiggini2007,Juhasz}}]
\[\mathrm{rank}\ \HFKa(K,g(K))=1\ \text{if and only if} \ K\ \text{is fibered}.\]
\end{named}

Both theorems have extensions to knots in arbitrary manifolds (where the latter requires irreducibility of the knot complement).  The theorems indicate a strong connection to embedded surfaces bounded by a knot in three-space.  There is a similar connection to surfaces in four-space.  Recall from the introduction the invariant $\tau(K)$ is defined  as the minimal Alexander grading of any cycle in $(\HFKa(K),\partial_K)$ which generates the homology.  We have

\begin{named}{Four-Ball Genus Bound}[{\cite[Corollary 1.3]{FourBall}}]
Let $g_4(K)$ denote the smooth four-ball genus; that is, the minimum genus of any smooth and properly embedded surface in the four-ball, bounded by $K$. Then
\[ |\tau(K)| \le g_4(K).\]
\end{named}

There is a useful interpretation of $\tau(K)$ in terms of the module structure on $\HFKm$.  It says that $\tau(K)$ is proportional to both the Alexander or Maslov grading of the generator of a distinguished free submodule.   To state it, let $\F[U]_{\{m,a\}}$ denote the free bigraded $\F[U]$-module in which $1\in \F[U]$ has Maslov grading $m$ and Alexander grading $a$, respectively.  We have

\begin{named}{$\HFKm$ Structure Theorem}\label{prop:tau} For any knot $K$, there is a splitting of bigraded $\F[U]$-modules:
\[\HFKm(K)\cong \F[U]_{\{-2\tau(K),-\tau(K)\}}\oplus \mathrm{Tor},\]
 where Tor is a bigraded, finitely-generated, torsion $\F[U]$-module.
\end{named}


\begin{proof} 
The fact that $\operatorname{rank}_{\F[U]}\HFKm(K)=1$ follows easily from the facts that $\CFKinf$ has a canceling differential and that every element in the quotient $\CFKp:=\CFKinf/\CFKm$ is $U$-torsion.   Indeed, we have an exact sequence of  $\F[U]$ modules:
\begin{equation*}\begin{tikzpicture}[>=latex] 
\matrix (m) [matrix of math nodes, row sep=1em,column sep=2em]
{ \cdots \!\!\!\!\phantom{I} & \HFKm & \HFKinf &  \HFKp  & \phantom{I}\!\!\!\!\cdots \\
&& \F[U,U^{-1}] &&\\};
\path[->,font=\scriptsize]
(m-1-3) edge[<->] node[right] {$\cong$} (m-2-3)
(m-1-1) edge[->] (m-1-2)
(m-1-2) edge[->]  (m-1-3)
(m-1-3) edge[->](m-1-4)
(m-1-4) edge[->] node[above] {$\delta$} (m-1-5);
\end{tikzpicture}\end{equation*}
where the last term is torsion, and the first term is finitely generated as an $\F[U]$-module (by the Heegaard diagram).  Such a sequence can only exist if the rank of the first term is one, so the structure theorem for finitely generated modules over a PID gives us the splitting claimed.  Thus the heart of the theorem is to show that the element $1\in \F[U]$ has Alexander grading $-\tau(K)$ and Maslov grading $-2\tau(K)$.
This was proved in \cite[Lemma A.2]{OST2008} for the Alexander grading, but we will prove both for completeness. All complexes appearing in the proof will have differentials induced from $\CFKinf$, reduced according to Section \ref{subsec:infinity}. 

Let $\filta(a)=\Filt^{-1}(\{0\}\times \{\le a\})$ denote the filtered subcomplex of $\HFKa(K)=\Filt^{-1}(\{0\}\times \Z)$ consisting of elements with Alexander grading less than or equal to $a$.  Then an equivalent definition of $\tau(K)$ --- indeed, the original definition --- is
\[ \tau(K)=\mathrm{min}\ \{a\in \Z \ | \ \iota_*: H_*(\filta(a))\rightarrow H_*(\HFKa)\cong \F \ \text{is surjective}\}\] 

The minus Floer homology groups are, by definition:
\[\HFKm_*(K,a):=  H_*(\Filt^{-1}(\{\le 0\}\times \{a\})).\]
We now have homotopy equivalences:
\[\Filt^{-1}(\{\le 0\}\times \{a\})\simeq \Filt^{-1}( \{a\}\times \{\le 0\})
\simeq  \Filt^{-1}(\{0\}\times \{\le -a\})[-2a],\]
where the first is given by reversal insensitivity and the second by the remarks following the statement of the true geography question (the shift of $-2a$ is in the Maslov grading).  Taking homology of the extremal complexes, we have 
\begin{equation}\label{hatminus}
\HFKm_*(K,a) \cong H_{*+2a}(\filta(-a)).
\end{equation}
Recall that the $U$-module structure on $\CFKinf$, and hence $\HFKm$, is induced by the identification of groups $\CFKinf = \HFKa\otimes\F[U,U^{-1}]$.  From this, it follows that under (\ref{hatminus})  an element in $\HFKm_*(K,a)$ of infinite $U$-order corresponds to a homology class in $H_{*+2a}(\filta(-a))$ which maps onto the generator of $H(\HFKa)\cong\F$.  Conversely, a homology class in $H_{*+2a}(\filta(-a))$ mapping onto a generator gives rise to an element of infinite order in $\HFKm$.  The proposition now follows immediately, noting the reversal in Alexander grading and the shift in Maslov grading in (\ref{hatminus}).
\end{proof}

There are two more important properties of knot Floer homology which we will utilize in our proofs.  The first is a connection with contact geometry.  To state it, we first recall that with a fibered knot $K\subset Y$ one can associate an essentially unique contact structure on $Y$, denoted $\xi_K$ (see \cite{ThurstonWinkelnkemper} for the construction of $\xi_K$ and \cite{Torisu} for its uniqueness).   Knot Floer homology gives rise to an invariant of $\xi_K$ in the following sense

\begin{named}{Contact Invariance}[{\cite[Theorem 1.3]{Contact}}] Given a fibered knot $K\subset Y$ with fiber surface of genus $g$,   let $c_{\rm\bf bot}$ be a 
generator for the  non-trivial knot Floer homology group in bottommost Alexander grading  \[\HFKa(-Y,K,-g)\cong\F\langle c_{\rm\bf bot}\rangle.\]
Then the class $c(\xi_K)$  defined by \[c(\xi_K):=[c_{\rm\bf bot}]\in H_*(\HFKa(-Y,K),\partial_K)\cong \HFa(-Y),\] is an invariant of $\xi_K$, meaning that for any other fibered knot $J$ with $\xi_J\simeq\xi_K$, we have $c(\xi_J)=c(\xi_K)\in \HFa(-Y)$.
\end{named}
Strictly speaking, the Alexander grading  here depends on the relative homology class of the fiber surface for its definition.  The fact that the bottommost group has rank one is a consequence of the fact that $K$ is fibered, and the fact that $\partial_K(c_{\rm\bf bot})=0$ (so that the homology class of $c_{\rm\bf bot}$ is defined) follows from the fact that we use the reduced complex, so that there are no chains in $\HFKa$ with Alexander grading less than $-g$.

The final property of knot Floer homology used in this paper relates   the filtered homotopy type of $\CFKinf(K)$ to the Floer homology of closed $3$-manifolds obtained by surgery on $K$.  Before stating it, we first point out that to a $3$-manifold with $\SpinC$ structure $\spinc$, there are three Floer chain complexes, $\CFm(Y,\spinc),\CFinf(Y,\spinc),\CFp(Y,\spinc)$, related by a short exact sequence.  

Now let  $S^3_n(K)$ denote the $3$-manifold obtained by $n$-framed surgery on $K$, and let $-W_n$ denote the associated $4$-dimensional $2$-handle cobordism with its orientation reversed,  viewed ``backwards" as a cobordism from  $S^3_n(K)$ to $S^3$.  In terms of this cobordism we define $\spinc_m$ to be the   unique $\SpinC$ structure on $S^3_n(K)$ which extends over $-W_n$ to a  $\SpinC$ structure $\spinct_m$ with Chern class given by $c_1(\spinct_m)=(-n+2m)\cdot S$, where $S\in H^2(-W_n)\cong\Z$ is a generator.

\begin{named}{Surgery Formula}[{\cite[Theorem 4.4]{Knots}}]Let $S^3_n(K)$ denote the manifold obtained by $n$-framed surgery on $K$ and let $\spinc_m$ denote the $\SpinC$ structure defined above.  Then for all $n\ge 2g(K)-1$, and any $m$ in the interval \[\lceil (-n+1)/2 \rceil \le m \le \lfloor n/2
  \rfloor\] there is a commutative diagram of short exact sequences:
\[\begin{tikzpicture}[>=latex] 
\matrix (m) [matrix of math nodes, row sep=1.5em,column sep=1.5em]
{  0 & {\Filt^{-1}(\{i< 0\},\{j<m\})} & \CFKinf(K) & \frac{\CFKinf(K)}{\Filt^{-1}(\{i< 0\},\{j<m\})} & 0 \\
0 & \CFm(S^3_n(K),\spinc_m) & \CFinf(S^3_n(K),\spinc_m) & \CFp(S^3_n(K),\spinc_m) & 0 \\ };
\path[->,font=\scriptsize]
(m-1-1) edge[->] (m-1-2)
(m-1-2) edge[->] node[above] {$i$}(m-1-3)
(m-1-2) edge[->] node[left] {$\simeq$}(m-2-2)
(m-1-3) edge[->] node[above] {$p$}  (m-1-4)
(m-1-3) edge[->] node[left] {$\simeq$}(m-2-3)
(m-1-4) edge[->] node[above] {$$} (m-1-5)
(m-1-4) edge[->] node[left] {$\simeq$}(m-2-4)
(m-2-1) edge[->] (m-2-2)
(m-2-2) edge[->] node[above] {$i$}(m-2-3)
(m-2-3) edge[->] node[above] {$p$}(m-2-4)
(m-2-4) edge[->] node[above] {}(m-2-5);
\end{tikzpicture}\]
where  the horizontal maps are inclusion into, and projection onto,  sub and quotient complexes, respectively, and the vertical maps are chain homotopy equivalences of complexes of $\F[U]$-modules.  
\end{named}

 There is a corresponding surgery formula which computes the Floer homology for all framed surgeries along $K$ \cite[Theorem 1.1]{IntegerSurgeries} in terms of a mapping cone of complexes derived from $\CFKinf$, and a further refinement which computes the Floer homology of all (rational-sloped) Dehn surgeries \cite[Theorem 1.1]{RationalSurgeries}.  These formulae are  also very useful, but we will have no need for them in the present article.

\section{Proof of Theorem \ref{thm:ribbon}}\label{sec:ribbon}
In this section we prove Theorem \ref{thm:ribbon}, which we break into two results.  The first (Theorem \ref{thm:HFKsame}) shows that all members of a family of knots obtained by twisting along a band sum of two unknots have isomorphic knot Floer homology.  This is an application of the skein exact sequence.  The second (Theorem \ref{thm:Khdifferent}) distinguishes the  knots in such a family in the case that the band sum we start with is a non-trivial knot.  For this we use Khovanov homology and a similar exact sequence in that context (Proposition \ref{prp:skein}), together with Kronheimer and Mrowka's result that Khovanov homology detects the unknot \cite{KM2010}.  

\begin{theorem} \label{thm:HFKsame} Let be $K$ be a band sum of two unknots, and $K_i$ the knot obtained from $K$ by adding $i$ full twists along the band (see Figure \ref{fig:bandsum} for an illustration).  Then $\HFKm(K_i)\cong \HFKm(K)$ as bigraded modules over $\F[U]$.
\end{theorem}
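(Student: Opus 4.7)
The plan is to apply the knot Floer skein exact sequence at a crossing inside the twist region of the band, identify the oriented resolution as the two-component unlink, and then exploit the resulting long exact sequence together with the slice-ness of each $K_i$ to force the map $\HFKm(K_i) \to \HFKm(K_{i-1})$ to be an isomorphism.

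Pick any crossing $c$ in the twist region of $K_i$. Because the two strands of the band are antiparallel (a prerequisite for the band sum of two unknots to yield a knot), both strands at $c$ lie on $K_i$. A crossing change at $c$ turns one positive crossing into a negative, which then cancels its neighbor from the same full twist; the net effect is to remove one full twist, producing $K_{i-1}$. Thus the skein triple is $(\mathcal{K}_+, \mathcal{K}_-, \mathcal{K}_0) = (K_i, K_{i-1}, L)$, where $L$ is the oriented resolution at $c$. Tracing the two components that appear after resolution: one contains $U_1$ together with the piece of the band on the $U_1$-side of $c$, and the other contains $U_2$ with the remaining half. All other twist crossings become self-crossings of one component or the other. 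Since each of these arcs is simple in $S^3$ and attached at close points on an unknotted circle, the projection's self-crossings can be removed by a $3$-dimensional isotopy; the two components lie in disjoint regions and are thus unlinked. Hence $L$ is the $2$-component unlink, and by the computation recalled in Section \ref{sec:back} we get $H_*(CFK^-(L)/(U_1-U_2)) \cong \F[U]\oplus\F[U]$ with generators in bigradings $(0,0)$ and $(-1,0)$.

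The skein exact sequence for $\HFKm$ now reads
\[ \cdots \to \HFKm_m(K_i,a) \xrightarrow{f^-} \HFKm_m(K_{i-1},a) \xrightarrow{g^-} H_{m-1}\!\left(\tfrac{CFK^-(L)}{U_1-U_2},a\right) \xrightarrow{h^-} \HFKm_{m-1}(K_i,a) \to \cdots, \]
and is $\F[U]$-equivariant. Two inputs constrain the connecting maps. First, each $K_i$ bounds a ribbon disk obtained by adding $i$ twists to the band of the disk bounded by $K$; this makes $K_i$ smoothly slice, so $\tau(K_i)=0$ and the $\HFKm$ structure theorem places the unique $\F[U]$-free summand of each $\HFKm(K_j)$ in bi-grading $(0,0)$. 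Second, the Alexander polynomial skein relation together with $\Delta_L=0$ gives $\Delta_{K_i}=\Delta_{K_{i-1}}$, so the graded Euler characteristics of $\HFKm(K_i)$ and $\HFKm(K_{i-1})$ agree. Combining these with the known bigradings of the generators of the middle term and the $\F[U]$-equivariance of the maps, a careful analysis shows that $g^-$ and $h^-$ must interact with the two free generators of the middle term in a way that forces $f^-$ to be an isomorphism of bigraded $\F[U]$-modules. Induction on $|i|$ then gives $\HFKm(K_i)\cong \HFKm(K)$ for all $i\in\Z$.

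The main obstacle is the final step. Naively, the rank-$2$ middle term could inject extra generators into $\HFKm(K_i)$ or cancel generators of $\HFKm(K_{i-1})$, and the Euler characteristic match alone does not preclude this. The argument must pin down the images and kernels of the connecting maps using both bi-grading data (the generator at $(0,0)$ of the middle term competing with the free summand of $\HFKm(K_j)$) and $U$-equivariance, plausibly combined with information about the $(\Z\times\Z)$-filtered reduction of $\CFKinf$ to rule out any non-trivial contribution from the unlink term.
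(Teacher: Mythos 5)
Your setup is the same as the paper's: twisting the band is a crossing change between the two (antiparallel) band strands, the oriented resolution cuts the band and is isotopic to the two-component unlink, sliceness of each $K_j$ gives $\tau(K_j)=0$ so that the $\HFKm$ structure theorem puts $\HFKm(K_j)\cong\F[U]_{\{0,0\}}\oplus\operatorname{Tor}_j$, and the skein triangle is $U$-equivariant. But the heart of the proof is precisely the step you dismiss with ``a careful analysis shows'' and then yourself flag as the main obstacle, so as written there is a genuine gap. Worse, the claim you aim for is false: $f^-\colon\HFKm(K_i)\to\HFKm(K_{i-1})$ cannot be an isomorphism of bigraded $\F[U]$-modules, since if it were, exactness would force the third term of the triangle to vanish, whereas it is $\F[U]_{\{0,0\}}\oplus\F[U]_{\{-1,0\}}$. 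What is actually true, and what must be proved, is that $f^-$ annihilates the free tower and restricts to a bigraded isomorphism $\operatorname{Tor}_i\to\operatorname{Tor}_{i\pm1}$; the isomorphism of the theorem is then assembled from this together with the fact that the free summands of all the $K_j$ sit in bigrading $(0,0)$ (again by sliceness), not from $f^-$ alone.

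The missing argument is a localization at very negative Alexander gradings. Each $\operatorname{Tor}_j$ is a finitely generated torsion $\F[U]$-module, hence finite dimensional over $\F$ and supported in finitely many Alexander gradings, so for $a\ll 0$ every group in the sequence is a single copy of $\F$ generated by $U^{-a}$ times a free generator, and exactness there pins down the maps. $U$-equivariance then propagates this back to the generators themselves: from $U^{-a}\cdot g^-(1)=g^-(U^{-a})=U^{-a}\ne 0$ and the fact that $1\in\F[U]_{\{-1,0\}}$ is the only element of the unlink module in Maslov grading $-1$, the map $g^-$ carries the free summand of its source isomorphically onto $\F[U]_{\{-1,0\}}$ (and kills torsion, the target being free); similarly $h^-(1)$ is a non-torsion element of Maslov/Alexander bidegree $(0,0)$, hence the free generator, so $h^-$ carries $\F[U]_{\{0,0\}}$ isomorphically onto the free tower of its target. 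Exactness then identifies $\ker f^-$ with that free tower and $\operatorname{im}f^-$ with the torsion of the target, giving the torsion isomorphism. Note that no appeal to the $(\Z\times\Z)$-filtered reduction of $\CFKinf$ is needed, contrary to your closing speculation, and the Alexander polynomial comparison plays no role; the structure theorem, finiteness of the torsion, and $U$-equivariance suffice. (Your identification of the resolution with the unlink is fine: each resolved component is one of the original unknots with an arc of the embedded band attached, which retracts along the band, so the resolved link is isotopic to the original unlink.)
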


\begin{proof}  We will show that $K_i$ and $K_{i+1}$ have isomorphic Floer homology for any $i\in\Z$.  The key observation is that $K_i$ and $K_{i+1}$ are related by a single crossing change. Further, for each $i\in\Z$,  the oriented resolution of the crossing results in the two component unlink.  Indeed, this resolution cuts the band from which $K=K_0$ is constructed.  Letting $\mathcal{K}_+=K_{i}$, $\mathcal{K}_-=K_{i+1}$, and $\mathcal{K}_0=\text{Unlink}$, we can apply the skein exact sequence to relate $\HFKm$ of the three links.

To do this, recall that for any knot the $\HFKm$ structure theorem  gives a  decomposition
\[\HFKm(K)\cong \F[U]_{\{-2\tau,-\tau\}}\oplus \operatorname{Tor}.\]
In the case at hand $K_i$ is a ribbon knot for all $i\in \Z$. Hence the smooth $4$-ball genus  of $K_i$  is zero, and the four-ball genus bound for $\tau$ implies that for each $i\in\Z$ we have
\[\HFKm(K_i)\cong \F[U]_{\{0,0\}}\oplus \mathrm{Tor}_i\]

As a result the skein exact sequence takes the form:
\[\begin{tikzpicture}[>=latex] 
\matrix (m) [matrix of math nodes, row sep=1.5em,column sep=2em]
{ \cdots & \F[U]_{\{0,0\}}\oplus \operatorname{Tor}_i &  \F[U]_{\{0,0\}}\oplus \operatorname{Tor}_{i+1} &   \F[U]_{\{0,0\}}\oplus \F[U]_{\{-1,0\}}& \cdots\\};
\path[->,font=\scriptsize]
(m-1-1) edge[->] node[above] {$h^-$}(m-1-2)
(m-1-2) edge[->] node[above] {$f^-$}  (m-1-3)
(m-1-3) edge[->] node[above] {$g^-$} (m-1-4)
(m-1-4) edge[->] node[above] {$h^-$} (m-1-5);
\end{tikzpicture}\]
where all maps are $\F[U]$-module homomorphisms.  The fact that the torsion submodules are finitely generated, together with the structure theorem for modules over a PID, implies that the torsion modules are finite dimensional as $\F$-vector spaces. Thus each torsion submodule has non-trivial elements in but a finite number of Alexander gradings.  On the other hand,  $U$ carries bidegree $(m,a)=(-2,-1)$, and so the free submodules have non-trivial elements in all Alexander gradings less than zero. It follows that for all $a\ll 0$ the exact sequence gives:
\[\begin{tikzpicture}[>=latex] 
\matrix (m) [matrix of math nodes, row sep=1.5em,column sep=1.5em]
{  & \HFKm_{2a}(\mathcal{K}_0,a) &  \HFKm_{2a}(K_i,a) &  \HFKm_{2a}(K_{i+1},a) & \HFKm_{2a-1}(\mathcal{K}_0,a) &\\
0 & \F & \F & \F & \F &0\\ };
\path[->,font=\scriptsize]
(m-1-2) edge[->] node[above] {$h^-$}(m-1-3)
(m-1-2) edge[->] node[left] {$\cong$}(m-2-2)
(m-1-3) edge[->] node[above] {$f^-$}  (m-1-4)
(m-1-3) edge[->] node[left] {$\cong$}(m-2-3)
(m-1-4) edge[->] node[above] {$g^-$} (m-1-5)
(m-1-4) edge[->] node[left] {$\cong$}(m-2-4)
(m-1-5) edge[->] node[left] {$\cong$}(m-2-5)
(m-2-1) edge[->] (m-2-2)
(m-2-2) edge[->] node[above] {$1$}(m-2-3)
(m-2-3) edge[->] node[above] {$0$}(m-2-4)
(m-2-4) edge[->] node[above] {$1$}(m-2-5)
(m-2-5) edge[->] (m-2-6);
\end{tikzpicture}\]
where each $\F$ is generated by the monomial $U^{-a}$ in one of the free $\F[U]$ summands.   Analyzing $g^-$, we have
\[U^{-a}\cm g^-(1)= g^-(U^{-a}\cm 1)= U^{-a}\in \F[U]_{\{-1,0\}}\] where $1\in \F[U]\subset \HFKm(K_{i+1})$ is the generator.  The first equality follows from $U$-equivariance, and the second is the lower right isomorphism in the diagram above. 

Thus $g^-(1)\ne 0$, and since $1\in \F[U]_{\{-1,0\}}$ is the only element in $\HFKm(\mathrm{Unlink})$ with grading $-1$, it follows that $g^-(1)=1$.  Hence, by $U$-equivariance, $g^-$ maps $\F[U]\subset \HFKm(K_{i+1})$ isomorphically onto $\F[U]_{\{-1,0\}} \subset \HFKm(\mathrm{Unlink}).$  Moreover, as the target is free, the torsion submodule is in the kernel of $g^-$.

The same analysis shows that $h^-$ maps $\F[U]_{\{0,0\}}\subset \HFKm(\mathrm{Unlink})$ isomorphically onto $\F[U]\subset \HFKm(K_{i})$;  indeed, 
\[U^{-a}\cm h^-(1)= h^-(U^{-a}\cm 1)= U^{-a}\] 
for all $a\ll0$. Thus $h^-(1)$ is an element in $\HFKm_0(K_i,0)$  which is not $U$-torsion.  The only such element is $1\in\F[U]$.   Exactness now implies that $f^-$ is a bigraded isomorphism between the torsion submodules $\operatorname{Tor}_i$ and $\operatorname{Tor}_{i+1}$. Since the free summands are isomorphic it follows that $\HFKm(K_i)\cong \HFKm(K_j)$ for all $i,j\in \Z$.
\end{proof}

Note that since knot Floer homology detects the genus and fiberedness of a knot, the above theorem implies all members of a family obtained by twisting along a band have the same genus and fiberedness status.  In particular, it follows immediately that $K_i$ is non-trivial for any integer $i$. 

Thus to prove Theorem \ref{thm:ribbon} (and Corollary \ref{cor:ribbon}), it remains to verify that the knots $K_i$ and $K_j$ are distinct for all $i\ne j$, in the case that  $K_i$ is non-trivial for some, and hence all, $i\in \Z$.   For this we will use Khovanov homology \cite{Khovanov2000}.   In order to draw as close a parallel with knot Floer homology as possible, we  employ the reduced version of Khovanov homology $\Khred(K)$  \cite{Khovanov2003}, with  $q$-grading  half of the quantum grading in \cite{Khovanov2000}.   As above, we take coefficients in $\bF=\bZ/2\bZ$.  With these conventions the Jones polynomial of a knot $K$ in $S^3$ is recovered by
\begin{equation*} V_K(t) =  \sum_{q\in\Z} \Big(  \sum_{u\in\Z}  (-1)^u \rk \Khred_q^u(K)\Big) \cm t^q, \end{equation*} 
normalized to take the value $1$ on any diagram of an unknot and satisfying the  skein relation
\begin{equation}\label{jones-skein} t^{-1}\cdot V_{\otherrightcross}(t)  -   t \cdot V_{\otherleftcross}(t) = (t^{-1/2}-t^{1/2}) \cdot V_{\otheroriented}(t).\end{equation}
We prove the following:

\begin{theorem}\label{thm:Khdifferent} Let $K$ be a non-trivial band sum of two unknots, and $K_i$ the knot obtained from $K$ by adding $i$ full twists along the band.  Then $\Khred(K_i)\ncong \Khred(K_j)$ if $i\ne j$.
\end{theorem}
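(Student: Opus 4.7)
The plan is to establish a Khovanov skein exact sequence (Proposition~\ref{prp:skein}), then pass to its graded Euler characteristic to obtain the Jones polynomial skein relation \eqref{jones-skein}, and finally exploit the resulting recurrence to distinguish the bigraded groups $\Khred(K_i)$ via their Jones polynomials.

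The crossing change from $K_i$ to $K_{i+1}$ is the same one employed in Theorem~\ref{thm:HFKsame}: a single crossing whose oriented resolution cuts the band and yields the two-component unlink. Applying \eqref{jones-skein} together with $V_{\mathrm{Unlink}}(t) = -(t^{1/2}+t^{-1/2})$ (and arranging the crossing sign by choice of labelling) gives the first-order linear recurrence
\[V_{K_{i+1}}(t) \;=\; t^{2}\, V_{K_i}(t) + (t^{2} - 1).\]
The substitution $W_i(t) := V_{K_i}(t) + 1$ clears the inhomogeneity to $W_{i+1} = t^{2} W_i$ and solves the recurrence in closed form as
\[V_{K_i}(t) \;=\; t^{2i}\bigl(V_{K_0}(t) + 1\bigr) - 1, \qquad i \in \bZ.\]

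To conclude, observe that $V_{K_0}(1) = 1$ for any knot, so the Laurent polynomial $V_{K_0}(t) + 1$ is non-zero (evaluating to $2$ at $t=1$). Hence
\[V_{K_i}(t) - V_{K_j}(t) \;=\; (t^{2i}-t^{2j})\bigl(V_{K_0}(t)+1\bigr)\]
is a product of two non-zero Laurent polynomials whenever $i \ne j$, and therefore non-zero in the integral domain $\bZ[t^{\pm 1/2}]$. Since $V_{K_i}(t)$ is the graded Euler characteristic of $\Khred(K_i)$, the bigraded groups $\Khred(K_i)$ and $\Khred(K_j)$ must be non-isomorphic.

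The main work is therefore not in the Jones-polynomial calculation but in setting up Proposition~\ref{prp:skein} with the correct bigrading shifts so that its graded Euler characteristic genuinely recovers \eqref{jones-skein} at the crossing in question; this is standard but requires careful bookkeeping of the signs of the local crossings. Worth noting is that this argument does not explicitly invoke Kronheimer--Mrowka's unknot detection or the non-triviality of $K$: both enter the larger story (the former as the tool needed elsewhere to see non-trivial knots via $\Khred$, the latter as a standing hypothesis), but the separation of $\Khred(K_i)$ from $\Khred(K_j)$ is forced purely by the Jones polynomial.
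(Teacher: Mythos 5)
There is a genuine gap, and it is precisely the one the paper is designed to get around. First, your recurrence has a sign error: with the conventions of \eqref{jones-skein} the two-component unlink has $V(t)=t^{1/2}+t^{-1/2}$ (you used $-(t^{1/2}+t^{-1/2})$, which belongs to the opposite skein convention), and the correct relation is
\[V_{K_i}(t)-1 \;=\; t^{2(j-i)}\bigl(V_{K_j}(t)-1\bigr),\]
i.e.\ Equation \eqref{jones} of the paper, not the version with $V+1$. One can see your version cannot be right independently of conventions: every knot satisfies $V_K\!\left(e^{2\pi i/3}\right)=1$, and substituting this value into $V_{K_{i+1}}+1=t^{-2}(V_{K_i}+1)$ forces $2=2\,e^{-4\pi i/3}$, a contradiction; the relation with $V-1$ gives $0=0$ and is consistent.

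This sign matters because it destroys the punchline. With the correct relation the relevant factor is $V_{K_0}(t)-1$, which is \emph{not} known to be non-zero: it vanishes identically exactly when $K_0$ has trivial Jones polynomial, and it is an open problem whether a non-trivial knot can have trivial Jones polynomial. So the Jones-polynomial argument distinguishes the $K_i$ only under the unverifiable hypothesis that $V_{K_0}\ne 1$ --- this is stated explicitly in the discussion following the theorem, and it is the whole reason the paper works at the level of Khovanov homology rather than its Euler characteristic. Your closing remark that the argument ``does not explicitly invoke Kronheimer--Mrowka's unknot detection or the non-triviality of $K$'' is the red flag: the paper's proof needs non-triviality of $K$ together with \cite[Theorem 1.1]{KM2010} to guarantee $\dim\Khred(K_i)>1$, and then runs a case analysis with the long exact sequence of Proposition \ref{prp:skein}, the canceling differential of Theorem \ref{thm:Turner}, and the slice condition $\tilde{s}(K_i)=0$ to show that the Khovanov groups of $K_{i+1}$ are those of $K_i$ shifted in the $q$-grading, hence pairwise non-isomorphic even when the Euler characteristics could all be trivial. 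As written, your proof does not cover that case, so it does not establish the theorem.
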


For the reader content to distinguish the knots in a specific family, the skein relation for the Jones polynomial implies that \begin{equation}\label{jones}V_{K_i}(t)=t^{2(j-i)}(V_{K_j}(t)-1)+1\end{equation} for  all $i,j\in \Z$. Thus  the Jones polynomials of knots in a family will be mutually distinct, provided that none of these polynomials is trivial.\footnote{Should this case arise the polynomials are trivial for all $i\in\bZ$; this would give rise to an infinite family of non-trivial knots with trivial Jones polynomial.}  It is currently unknown whether there exists a non-trivial knot with trivial Jones polynomial.  This explains why we turn to Khovanov homology, equipped with Kronheimer and Mrowka's unknot detection theorem.  As such our proof might be viewed as a categorification of Equation \eqref{jones}.

\subsection{The skein exact sequence in Khovanov homology}  We develop the oriented exact sequence which categorifies the skein relation of Equation (\ref{jones-skein}).  This exact sequence was alluded to in Rasmussen   {\cite[Section 4.2]{Rasmussen2005}}, and proved for $sl_N$ link homology theory in \cite[Proposition 7.6]{Rasmussen}. We essentially follow this latter proof in the context of reduced Khovanov homology\footnote{Up to taking mirrors, the $N=2$ case of Khovanov and Rozansky's (reduced) $sl_N$-homology \cite{KR-1,KR-2} studied by Rasmussen in \cite{Rasmussen} coincides with reduced Khovanov homology (see also Hughes \cite{Hughes2013}).}, with conventions tailored to our needs. See Figure \ref{fig:conventions} for an illustration of the conventions used here. 

We find it convenient to use the {\em diagonal} grading $\delta=u-q$ giving rise to  a $(\Z\times \Z)$-graded (co)homological invariant $\Khred^\delta_q(K)$ of a knot $K$ (for links this invariant is $(\half\Z\times \half\Z)$-graded). A shift operator $[\cm,\cm]$ adjusts the bigrading by the rule $\Khred^\delta_q(K)[i,j]=\Khred^{\delta-i}_{q-j}(K)$. 

There are a pair of long exact sequences for this theory described by Rasmussen \cite{Rasmussen2005}, though our conventions are consistent with Manolescu and Ozsv\'ath \cite[Proposition 2.2]{MO2008}. Let $n_-(K)$ count the number of negative crossings in a fixed orientation of (a diagram of) $K$. Given a distinguished positive crossing  $\otherrightcross$ in $K$ set $c_+=n_-(\zero)-n_-(\otherrightcross)$; this constant compares the negative crossings in the original diagram with the negative crossings in a choice of orientation on the resolution (different choices of orientation may result in different constants). Similarly, set $c_-=n_-(\zero)-n_-(\otherleftcross)$ for a negative crossing. Then for each $q$ we have long exact sequences
\[\begin{tikzpicture}[>=latex] 
\matrix (m) [matrix of math nodes, row sep=1.5em,column sep=2em]
{ \cdots &\Khred(\zero)[-\half c_+,\half(3c_+ +2)]&  \Khred(\otherrightcross) &   \Khred (\otheroriented)[-\half,\half] & \cdots\\};
\path[->,font=\scriptsize]
(m-1-1) edge[->] (m-1-2)
(m-1-2) edge[->] node[above] {$i^*_+$}  (m-1-3)
(m-1-3) edge[->]  (m-1-4)
(m-1-4) edge[->] node[above] {$\partial_+^*$} (m-1-5);
\end{tikzpicture}\]
\[\begin{tikzpicture}[>=latex] 
\matrix (m) [matrix of math nodes, row sep=1.5em,column sep=2em]
{ \cdots &\Khred (\otheroriented)[\half,-\half] &  \Khred(\otherleftcross) &   \Khred(\zero)[-\half(c_-+1),\half(3c_-+1)]& \cdots\\};
\path[->,font=\scriptsize]
(m-1-1) edge[->] (m-1-2)
(m-1-2) edge[->] node[above] {$i^*_-$}  (m-1-3)
(m-1-3) edge[->] (m-1-4)
(m-1-4) edge[->] node[above] {$\partial^*_-$} (m-1-5);
\end{tikzpicture}\]
where the connecting homomorphisms $\partial^*_\pm$ raise the $\delta$-grading by one and preserve the $q$-grading, and the other maps preserve bigrading. Both long exact sequences arise from a  natural splitting at the chain level from the inclusions of subcomplexes 
\[\textstyle i_+\co \CKhred(\zero)[-\half c_+,\half(3c_+ +2)]\hookrightarrow  \CKhred(\otherrightcross) \quad{\rm and}\quad 
i_-\co \CKhred (\otheroriented)[\half,-\half] \hookrightarrow  \CKhred(\otherleftcross) \]
where $\CKhred(K)$ is the chain complex computing $\Khred(K)$. In particular, each long exact sequence arises from a mapping cone construction. Towards comparing $\Kcross_+$ and $\Kcross_-$ (as in Figure \ref{crossing}), for the same choice of orientation on the (common) resolution $\zero$ at the distinguished crossing, let $c=c_+$ so that $c_-=c-1$. Then the relevant mapping cones in this setting are  
\begin{align}
\label{positive}
\CKhred(\otherrightcross) & \textstyle= \cone{\partial_+ \co \CKhred (\otheroriented)[-\half,\half] \to \CKhred(\zero)[-\half c,\half(3c+2)]}
\\
\label{negative}
\CKhred(\otherleftcross) & \textstyle= \cone{\partial_- \co \CKhred(\zero)[-\half c,\half(3c -2)] \to \CKhred (\otheroriented)[\half,-\half] }
\end{align}

Following \cite[Section 7.3]{Rasmussen} and \cite[Section 4.2]{Rasmussen2005}, we deduce the oriented skein exact sequence from this pair of mapping cones.  Rasmussen observes that from (\ref{negative}) it follows there is a homotopy equivalence \[\begin{tikzpicture}[>=latex] 
\matrix (m) [matrix of math nodes, row sep=-0.5em,column sep=2em]
{\CKhred(\zero)[-\half c,\half(3c +2)] & \cone{i_-\co \CKhred (\otheroriented)[-\half,\frac{3}{2}]\to \CKhred(\otherleftcross)[0,2]} \\
\rotatebox[origin=c]{90}{$\in$} & \rotatebox[origin=c]{90}{$\in$} \\
x & (\partial_-(x),(x,0)) \\};
\path[->,font=\scriptsize] 
(m-1-1) edge[->]  node[above] {$\iota$} (m-1-2)
 (m-3-1) edge[|->] (m-3-2);
 \end{tikzpicture}\]
where the target vector $\iota(x)$ is written with respect to the natural direct sum decomposition of the mapping cone consisting of two copies of the complex associated with $\otheroriented$ on either side of  the complex for $\zero$.  Note we have applied an overall (and cosmetic) shift $[0,2]$, and that there is an additional shift of $-1$ in the $\delta$-grading of $\CKhred (\otheroriented)$ to ensure that $i_-$, now the differential in a mapping cone, raises $\delta$-grading by one.  In fact $\iota$ is a homotopy inverse to the natural projection from $\cone{i_-}$ onto its $\CKhred(\zero)$ summand. Moreover, $\iota$ is an inclusion in a strong deformation retract \cite[Proof of Proposition 7.6]{Rasmussen}, so by an observation of Bar-Natan \cite[Lemma 4.5]{Bar-Natan2005} we obtain 
\[\CKhred(\otherrightcross) \textstyle\simeq \cone {\iota\partial_+ \co \CKhred(\otheroriented)[-\half,\half] \to \cone{i_-} }.\] We can unpack this iterated cone description of  $\CKhred(\otherrightcross)$ as follows:
\[\begin{tikzpicture}[>=latex] 
\matrix (m) [matrix of math nodes, row sep=0em,column sep=4em]
{\CKhred(\otheroriented)[-\half,\half]& \\
& \CKhred(\otherleftcross)[0,2] \\
\CKhred(\otheroriented)[-\half,\frac{3}{2}] & \\};
\path[->,font=\scriptsize]
(m-1-1) edge[->] node[above]  {$(\partial_+,0)$} (m-2-2)
(m-1-1) edge[->] node[left] {$\partial_-\partial_+$} (m-3-1)
(m-3-1) edge[->] node[below] {$i_-$} (m-2-2);
\end{tikzpicture}\]
By construction, the maps in this three-step filtration on  $\CKhred(\otherrightcross)$ raise $\delta$-grading by 1  and  preserve the $q$-grading. 
Clearly $ \CKhred(\otherleftcross)[0,2]$ is a subcomplex, from which it follows that there is a short exact sequence
\[\begin{tikzpicture}[>=latex] 
\matrix (m) [matrix of math nodes, row sep=1.5em,column sep=2em]
{ 0 & \CKhred(\otherleftcross)[0,2]& \CKhred(\otherrightcross)  &  \cone{\partial_-\partial_+}& 0.\\};
\path[->,font=\scriptsize]
(m-1-1) edge[->] (m-1-2)
(m-1-2) edge[->]  (m-1-3)
(m-1-3) edge[->]  (m-1-4)
(m-1-4) edge[->] (m-1-5);
\end{tikzpicture}\]
Notice that taking the graded Euler characteristic yields
$$ V_{\otherrightcross}(t) = t^2\cdot V_{\otherleftcross} + \chi(\cone{\partial_-\partial_+})$$
where $\chi(\cone{\partial_-\partial_+})=(t^{1/2}-t^{3/2}) \cdot V_{\otheroriented}(t)$. This can be rewritten, after multiplication by $t^{-1}$, as
$$ t^{-1}\cdot V_{\otherrightcross}(t) - t\cdot V_{\otherleftcross}(t) = (t^{-1/2}-t^{1/2}) \cdot V_{\otheroriented}(t)$$
and compared with Equation \eqref{jones-skein}.  

It will be necessary to have an interpretation of  $\cone{\partial_-\partial_+}$, and not merely its Euler characteristic, in terms of the link obtained from the oriented resolution.  To this end, recall that choosing a marked point $p_i$ on each of the $l$ components of a link diagram endows the Khovanov complex with the structure of  a module over the ring $\F[x_1,...,x_{l}]/(x_1^2,...,x_{l}^2)$.  On the vector subspace $(\F[X]/X^2)^{\otimes n} $ in $CKh$ arising from a given complete resolution,  the variable $x_i$ acts as multiplication by $X$ on the tensor factor corresponding to the unknotted component in the resolution containing $p_i$.  We refer to the endomorphism $x_i$ as a {\em basepoint map}.  The basepoint maps are chain maps and, taken together, they endow the Khovanov complex and its homology with the aforementioned module structure.  The  module structure on homology is an invariant of the link (see \cite[Section 2]{HeddenNi}, \cite{Khovanov2003} or   \cite{Rasmussen} for more details).  

The invariance of the Khovanov module identifies $\cone{\partial_-\partial_+}$ as an invariant of the oriented resolution: one checks that if basepoints $p_1$ and $p_2$ are placed on either strand near the resolved crossing in $\otheroriented$, then we have $\partial_-\partial_+ = x_1 + x_2$, as endomorphisms of the unreduced Khovanov complex.  Picking one of these two points as distinguished, say $p_1$, the reduced complex $\CKhred$ of each of the three links in the skein relation is formed as the cokernel complex of $x_1$.  It follows that $$\cone{\partial_-\partial_+}= \cone{x_2: \CKhred({\otheroriented})\to \CKhred({\otheroriented})},$$
and the proof of invariance of the Khovanov module implies that the homology of the mapping cone on the right is an invariant of the $2$-pointed link (specifically  \cite[Lemma 2.3]{HeddenNi}, \cite[Lemma 5.16]{Rasmussen}).   In the case that  $\otheroriented$ has two components, $\cone{x_2}$ is an invariant of  $\otheroriented$ together with an ordering of its components which Rasmussen calls the  {\em totally reduced homology};  and in the case that $\otheroriented$ is a knot, it is simply a knot invariant.

\begin{proposition}[Rasmussen {\cite[Section 4.2]{Rasmussen2005} \cite[Proposition 7.6]{Rasmussen}}]\label{prp:skein}
Let $\Kcross_\pm$ be the links of Figure \ref{crossing} that differ by a single crossing change. There is a long 
exact sequence 
\[\begin{tikzpicture}[>=latex] 
\matrix (m) [matrix of math nodes, row sep=1.5em,column sep=2em]
{ \cdots & \Khred(\Kcross_-)[0,2] & \Khred(\Kcross_+)  &  H_*(\cone{\partial_-\partial_+})& \cdots\\};
\path[->,font=\scriptsize]
(m-1-1) edge[->] (m-1-2)
(m-1-2) edge[->]  (m-1-3)
(m-1-3) edge[->]  (m-1-4)
(m-1-4) edge[->] (m-1-5);
\end{tikzpicture}\]
where  $(\partial_-\partial_+)^*\co \Khred(\Kcross_0)[-\half,\half]\to\Khred(\Kcross_0)[-\half,\frac{3}{2}]$ is the basepoint map $(x_2)^*$, an invariant of $\Kcross_0$ relative to the component(s) on either side of the resolution.  All maps preserve the $q$-grading; the connecting homomorphisms raise the $\delta$-grading by 1. 
\hfill\ensuremath$\Box$
\end{proposition}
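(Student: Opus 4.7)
The plan is to read off the exact sequence directly from the iterated mapping cone description of $\CKhred(\Kcross_+)$ that was assembled in the paragraphs preceding the statement. The approach has four logical steps, most of which are already in place in the exposition above.

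First, I would start from the two oriented skein mapping cones \eqref{positive} and \eqref{negative}, which express $\CKhred(\otherrightcross)$ and $\CKhred(\otherleftcross)$ in terms of $\CKhred(\otheroriented)$ and $\CKhred(\zero)$ connected by the chain maps $\partial_+$ and $\partial_-$ (arising as the natural quotient maps in the respective skein short exact sequences). These are the only ingredients one gets for free from reduced Khovanov homology and a fixed cube of resolutions.

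Second, I would apply Bar-Natan's observation (as attributed to Rasmussen \cite[Proof of Proposition 7.6]{Rasmussen}) that $\iota\co \CKhred(\zero)[-\tfrac12 c,\tfrac12(3c+2)]\to \cone{i_-}$ given by $x\mapsto(\partial_-(x),(x,0))$ is a strong deformation retract, hence a homotopy inverse to projection. Substituting $\cone{i_-}$ for $\CKhred(\zero)$ in \eqref{positive} produces the three-term filtered complex displayed in the excerpt, whose associated composite map $\CKhred(\otheroriented)[-\tfrac12,\tfrac12]\to\CKhred(\otheroriented)[-\tfrac12,\tfrac32]$ is precisely $\partial_-\partial_+$. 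Since $\CKhred(\otherleftcross)[0,2]$ appears as a subcomplex in this iterated cone, we extract the short exact sequence
\[0\to \CKhred(\Kcross_-)[0,2]\to \CKhred(\Kcross_+)\to \cone{\partial_-\partial_+}\to 0,\]
whose induced long exact sequence in homology is exactly the one asserted, with the gradings of the connecting map dictated by the shift on $\CKhred(\Kcross_-)$ and the fact that the cone differential raises $\delta$-grading by $1$ and preserves $q$-grading.

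Third, I would identify the map $\partial_-\partial_+$ with the basepoint chain map $x_2$. Place basepoints $p_1,p_2$ on the two strands adjacent to the resolved crossing in $\otheroriented$. A direct local computation in the two-strand model (the saddle and merge/split maps of the Frobenius algebra $\F[X]/(X^2)$) shows that, at the chain level on the unreduced complex, $\partial_-\partial_+ = x_1+x_2$, and passing to $\CKhred$ (the cokernel of $x_1$) leaves $x_2$. The main obstacle in this step is keeping track of the sign and grading conventions in the oriented skein sequences, which is why the explicit shift constants $c_\pm$ must be carried along.

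Finally, invariance of the resulting long exact sequence follows from the invariance of the Khovanov module: by \cite[Lemma 2.3]{HeddenNi} (or \cite[Lemma 5.16]{Rasmussen}), the homotopy type of $\cone{x_2\co\CKhred(\Kcross_0)\to\CKhred(\Kcross_0)}$ depends only on the $2$-pointed link $\Kcross_0$ with the chosen basepoint locations, hence is an invariant of $\Kcross_0$ relative to the component(s) on either side of the crossing resolution. This completes the interpretation of the third term in the desired long exact sequence.
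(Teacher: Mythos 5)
Your proposal is correct and follows essentially the same route as the paper: the mapping cones \eqref{positive} and \eqref{negative}, the substitution of $\cone{i_-}$ via the strong deformation retract and Bar-Natan's lemma, extraction of the short exact sequence with $\CKhred(\Kcross_-)[0,2]$ as a subcomplex, the local identification $\partial_-\partial_+=x_1+x_2$ reducing to $x_2$ on $\CKhred$, and invariance of the cone via the Khovanov module structure. No gaps; this matches the paper's argument step for step.
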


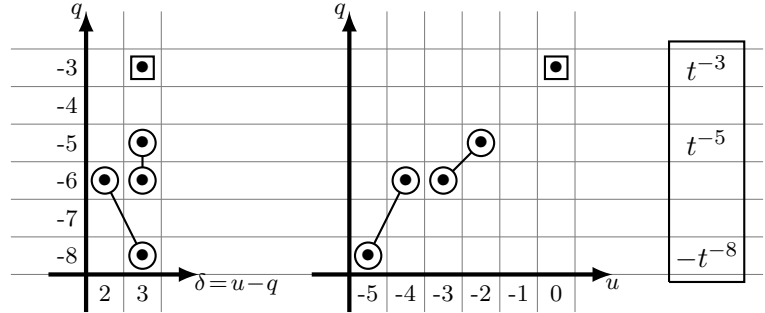
\begin{figure}[ht!]
\begin{tikzpicture}[>=latex] 

\draw [gray] (-5,3.5) -- (5,3.5);
\draw [gray] (-5,3) -- (5,3);
\draw [gray] (-5,2.5) -- (5,2.5);
\draw [gray] (-5,2) -- (5,2);
\draw [gray] (-5,1.5) -- (5,1.5);
\draw [gray] (-5,1) -- (5,1);
\draw [gray] (-5,0.5) -- (5,0.5);

\draw [gray] (2.5,0) -- (2.5,4);
\draw [gray] (2,0) -- (2,4);
\draw [gray] (1.5,0) -- (1.5,4);
\draw [gray] (1,0) -- (1,4);
\draw [gray] (0.5,0) -- (0.5,4);
\draw [gray] (0,0) -- (0,4);
\draw [gray] (-0.5,0) -- (-0.5,4);

\draw [gray] (-3,0) -- (-3,4);
\draw [gray] (-3.5,0) -- (-3.5,4);
\draw [gray] (-4,0) -- (-4,4);


\node at (4.5-1.5,-0.125+0.5) {\footnotesize{$u$}};
\node at (1-0.125-1.5,4) {\footnotesize{$q$}};

\draw[ultra thick,->] (1-1.5,0) -- (1-1.5,4);
\draw[ultra thick,->] (0.5-1.5,0.5) -- (4.5-1.5,0.5);

\node at (1.25-1.5,.25) {\footnotesize{-$5$}};
\node at (1.75-1.5,.25) {\footnotesize{-$4$}};
\node at (2.25-1.5,.25) {\footnotesize{-$3$}};
\node at (2.75-1.5,.25) {\footnotesize{-$2$}};
\node at (3.25-1.5,.25) {\footnotesize{-$1$}};
\node at (3.75-1.5,.25) {\footnotesize{$0$}};

\node at (-4.25,0.75) {\footnotesize{-$8$}};
\node at (-4.25,1.25) {\footnotesize{-$7$}};
\node at (-4.25,1.75) {\footnotesize{-$6$}};
\node at (-4.25,2.25) {\footnotesize{-$5$}};
\node at (-4.25,2.75) {\footnotesize{-$4$}};
\node at (-4.25,3.25) {\footnotesize{-$3$}};

\node at (1.75-1.5,1.75) {$\bu$};\draw[thick] (1.75-1.5,1.75) circle (0.175);

\node at (1.25-1.5,0.75) {$\bu$};\draw[thick] (1.25-1.5,0.75) circle (0.175);\draw[thick,shorten <=5pt,shorten >=5pt] (1.75-1.5,1.75) -- (1.25-1.5,0.75);
\node at (2.25-1.5,1.75) {$\bu$};\draw[thick] (2.25-1.5,1.75) circle (0.175);
\node at (2.75-1.5,2.25) {$\bu$};\draw[thick] (2.75-1.5,2.25) circle (0.175);\draw[thick,shorten <=5pt,shorten >=5pt] (2.25-1.5,1.75) -- (2.75-1.5,2.25);
\node at (3.75-1.5,3.25) {$\bu$}; \draw[thick] (-3.5+0.1+5.5,3+0.1) rectangle (-3-0.1+5.5,3.5-0.1);


\node at (-2,-0.125+0.5) {\footnotesize{$\delta\!=\!u\!-\!q$}};
\node at (-4.125,4) {\footnotesize{$q$}};

\node at (-3.25,0.25) {\footnotesize{$3$}};
\node at (-3.75,0.25) {\footnotesize{$2$}};

\draw[ultra thick,->] (-4,0) -- (-4,4);
\draw[ultra thick,->] (-4.5,0.5) -- (-2.5,0.5);

\node at (-3.75,1.75) {$\bu$};\draw[thick] (-3.75,1.75) circle (0.175);

\node at (-3.25,0.75) {$\bu$};\draw[thick] (-3.25,0.75) circle (0.175);\draw[thick,shorten <=5pt,shorten >=5pt] (-3.25,0.75) -- (-3.75,1.75);
\node at (-3.25,1.75) {$\bu$};\draw[thick] (-3.25,1.75) circle (0.175);
\node at (-3.25,2.25) {$\bu$};\draw[thick] (-3.25,2.25) circle (0.175);\draw[thick,shorten <=5pt,shorten >=5pt] (-3.25,1.75) -- (-3.25,2.25);
\node at (-3.25,3.25) {$\bu$};\draw[thick] (-3.5+0.1,3+0.1) rectangle (-3-0.1,3.5-0.1);

\node at (4.25,3.25) {$t^{-3}$};
\node at (4.25,2.25) {$t^{-5}$};
\node at (4.25,.75) {$-t^{-8}$};
\draw[black, thick] (3.75,0.4) -- (3.75,3.6) -- (4.75,3.6) -- (4.75,0.4) -- (3.75,0.4);
\end{tikzpicture}
\caption{Grading conventions illustrated for (the mirror of) the knot $8_{19}$ in Rolfsen's table \cite{Rolfsen1976}: In this work, the primary grading is $\delta$ (diagonal) and the secondary grading is $q$ (quantum) on the reduced Khovanov homology, $\Khred(8_{19})$.  Each $\bu$ denotes a copy of the vector space $\bF$. In this example $\tilde{s}(8_{19})=-3$, and the pairings given by the canceling differential have been illustrated (the spectral sequence, for $8_{19}$, collapses on the $E_2$-page since there are only 2 diagonals).  Notice that $V_{8_{19}}(t)=t^{-3}+t^{-5}-t^{-8}$ in this case; this graded Euler characteristic is recorded at the right. } \label{fig:conventions}\end{figure}

\subsection{A canceling differential in Khovanov homology} As with knot Floer homology, Khovanov homology admits a canceling differential owing to the existence of an analogue of Lee's spectral sequence for $\F$-coefficients. 

\begin{theorem}[Turner \cite{Turner2006}]\label{thm:Turner} Given a knot $K$ in $S^3$ there is a spectral sequence with $E_1\cong\Khred(K)$ and $E_\infty\cong\F$ supported in grading $u=\delta+q=0$. The differential $\boldsymbol d_i$ on the $E_i$-page raises the $\delta$-grading by $1-i$ and raises the $q$-grading by $i$. \hfill\ensuremath$\Box$\end{theorem}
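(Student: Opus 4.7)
The plan is to build the spectral sequence from a deformation of the Khovanov chain complex, following Turner's approach. Over $\F = \F_2$, replace the Frobenius algebra $\F[X]/X^{2}$ used in Khovanov's construction by the deformed Frobenius algebra $A = \F[X]/(X^{2}+X)$. With the same counit $\varepsilon(1)=0$, $\varepsilon(X)=1$, the induced multiplication and comultiplication pick up extra terms over the standard ones; in particular $\mu'(X,X)=X$ and $\Delta'(1)=1\otimes X + X\otimes 1 + 1\otimes 1$, where the additional summand $1\otimes 1$ is the Turner perturbation. Reading off the Khovanov cube using $A$ in place of $\F[X]/X^{2}$, and reducing at a basepoint in the usual way, produces a chain complex $(\CKhred(K),d')$ on the same underlying bigraded vector space as the ordinary reduced Khovanov complex, with $d' = d + \Phi$, where $d$ is the usual differential and $\Phi$ encodes the deformation. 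By construction $d'$ still raises the homological grading $u$ by $1$, but the $q$-grading is no longer preserved: each application of the $1\otimes 1$ term in $\Delta'$ (resp.\ the $X$ term in $\mu'$) raises $q$ by $1$.

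I would then filter $\CKhred(K)$ by the decreasing $q$-filtration $F^{p}=\{x : q(x)\ge p\}$. The filtered differential $d'=d+\Phi$ is compatible with this filtration because $d$ preserves $q$ and $\Phi$ strictly raises it. The $E_{0}$-page is the associated graded complex, whose differential is precisely the undeformed Khovanov differential $d$, so $E_{1}\cong\Khred(K)$. The higher differentials $\boldsymbol d_{i}$ come from the $q$-raising pieces of $\Phi$; since each elementary deformation step raises $q$ by $1$ and $u$ by $1$, the standard bookkeeping for a filtration spectral sequence shows that $\boldsymbol d_{i}$ raises $q$ by $i$ and $u$ by $1$, hence raises $\delta = u-q$ by $1-i$, as stated.

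It remains to identify the $E_{\infty}$-page with the homology of $(\CKhred(K),d')$ and show this is a single copy of $\F$ in grading $u = 0$ (so $q = 0$ on the diagonal $\delta = 0$). The algebra $A$ splits as a product of fields via the orthogonal idempotents $X$ and $1+X$, giving an algebra isomorphism $A\cong \F\oplus\F$. Interpreted as a $(1{+}1)$-dimensional TQFT, this splitting decomposes the unreduced deformed complex of any link into a sum indexed by $\{0,1\}$-colourings of the components of the resolutions, and a standard argument (verifying invariance under the Reidemeister moves directly in Turner's framework, or quoting Bar-Natan's cobordism-theoretic formulation over characteristic $2$) shows that for any knot the unreduced deformed homology is $\F\oplus\F$, with both generators concentrated in $u=0$. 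Quotienting by the basepoint action of $X$ to pass to the reduced theory kills one of these summands and leaves $E_{\infty}\cong\F$ in bigrading $(u,q)=(0,0)$.

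The main obstacle is the last step: establishing Reidemeister invariance of the deformed complex and pinning down the homology for a general knot. This is the content of Turner's calculation; the move-by-move chain homotopies used for Khovanov invariance need to be promoted to chain homotopies for $d'=d+\Phi$, and the resulting total homology computed for the unknot and then propagated to all knots via invariance. Once that input is accepted, the spectral sequence and its grading properties follow formally from the filtration construction above.
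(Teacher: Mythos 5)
The paper offers no proof of this statement at all --- it is quoted from Turner with a $\Box$ --- so the relevant comparison is with Turner's argument, and your proposal reconstructs exactly that standard route: deform the Frobenius algebra to $\F[X]/(X^2+X)$ (Bar--Natan's characteristic-two perturbation), filter the deformed reduced complex by the quantum grading so that the associated graded differential is the ordinary Khovanov differential, giving $E_1\cong\Khred(K)$ with $\boldsymbol d_i$ of bidegree $(u,q)=(1,i)$, i.e.\ raising $\delta=u-q$ by $1-i$; then identify the target via the idempotent splitting $\F[X]/(X^2+X)\cong\F\oplus\F$ and Turner's invariance/computation, which you correctly acknowledge as the essential input. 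This is the same approach as the cited source, and the grading bookkeeping is consistent with the paper's convention that $q$ is half of Khovanov's quantum grading.

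One genuine error to remove: your parenthetical claim that the surviving generator sits in bigrading $(u,q)=(0,0)$. What is true (and all the theorem asserts) is $u=0$; the idempotent/linking-number argument pins down only the homological grading of the two Bar--Natan generators of a knot. The $q$-filtration level of the survivor is by definition the invariant $\tilde{s}(K)$, which is generally nonzero --- the paper itself records $\tilde{s}(8_{19})=-3$ in Figure~2 and $\tilde{s}(14^n_{19265})=-1$, and any knot with $g_4>0$ detected by $\tilde s$ gives a counterexample. So delete the ``$q=0$'' assertion; nothing in your argument needs it. A smaller point worth a sentence in a careful write-up: since $X$ is idempotent rather than nilpotent in the deformed algebra, one should say explicitly how the basepoint reduction is performed (e.g.\ as the quotient or image complex of the filtered chain map $x_1$), and note that this is compatible with the $q$-filtration; with that in place the rest follows formally as you say.
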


Note that these conventions follow \cite[Section 3.4]{Watson2010}. In direct analogy with Rasmussen's $s$-invariant (for Khovanov homology over $\Q$) or the  $\tau$-invariant (for $\HFK$), the canceling differential produces an  invariant, $\tilde{s}(K)\in\Z$, defined to be the quantum grading of the generator surviving Turner's spectral sequence. We will make use of:

\begin{theorem}[Rasmussen \cite{Rasmussen2010}, Lipshitz-Sarkar \cite{LS2012}] 
For any knot $K$, $|\tilde{s}(K)|\le g_4(K).$
\hfill\ensuremath$\Box$
\end{theorem}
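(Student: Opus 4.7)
The plan is to carry out Rasmussen's slice--genus argument for his $s$-invariant with Turner's $\F$-coefficient spectral sequence (Theorem \ref{thm:Turner}) in place of Lee's $\mathbb Q$-coefficient one. Two ingredients are required: (i) a cobordism-induced chain map on $\CKhred$ with controlled $q$-grading shift, and (ii) compatibility of this map with the deformation defining Turner's spectral sequence.

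For the set-up, let $F\subset B^4$ be a smoothly, properly embedded surface with $\partial F=K$ and $g(F)=g_4(K)=g$. Excising a small open disk at an interior point of $F$ yields a connected smooth cobordism $\Sigma\subset S^3\times[0,1]$ of genus $g$ from the unknot $U$ to $K$; the time-reversed cobordism $\bar\Sigma$ runs from $K$ to $U$. Decomposing $\Sigma$ via a generic height function into births, saddles, and deaths (interspersed with Reidemeister moves), the Khovanov TQFT of Jacobsson and Bar-Natan assigns a chain map to each elementary piece; composing produces a map
\[
\phi_\Sigma\co \CKhred(U)\longrightarrow \CKhred(K).
\]
A grading count on the elementary pieces (each birth/death contributes $+\tfrac12$ to $q$ and each saddle $-\tfrac12$, in the half-integral reduced conventions of this paper) shows that $\phi_\Sigma$ is filtered of $q$-degree $\tfrac12\chi(\Sigma)=-g$ and preserves the $\delta$-grading.

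The next step is to observe that the elementary TQFT maps admit verbatim descriptions on Turner's deformation, in which the Frobenius algebra $\F[X]/(X^2)$ is replaced by $\F[X]/(X^2+X)$. Hence $\phi_\Sigma$ lifts to a filtered chain map on the deformed complex that commutes with the deformed differential, inducing a map of spectral sequences of the same $q$-filtration degree, and thus a filtered map on $E_\infty\cong\F$. Following Rasmussen, and its characteristic-two refinement by Lipshitz--Sarkar, for a connected cobordism this induced $E_\infty$ map is an isomorphism: at the chain level one tracks Turner's canonical cycle for $U$ through the elementary moves and verifies that its image has leading $q$-term equal to the canonical cycle for $K$.

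Given these inputs the conclusion is immediate. The $E_\infty$ survivor for $U$ lies in $q$-grading $\tilde s(U)=0$; its image under $\phi_\Sigma$ represents the survivor for $K$ at $q$-grading at least $\tilde s(U)-g=-g$, giving $\tilde s(K)\ge -g$. Applying the argument to $\bar\Sigma$ yields $\tilde s(K)\le g$, so $|\tilde s(K)|\le g=g_4(K)$. The main obstacle is step (ii): establishing functoriality of Turner's deformed TQFT and non-triviality of the induced $E_\infty$ map for a connected cobordism. Over $\mathbb Q$ this is handled cleanly via Lee's orientation-indexed canonical generators, whereas in characteristic two those generators are identified, forcing one to work at the chain level with the deformation explicitly; this is essentially the content of the two cited references.
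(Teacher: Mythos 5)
This statement is quoted in the paper without proof (the $\Box$ simply cites Rasmussen and Lipshitz--Sarkar), and your sketch is precisely the argument of those references --- puncture a genus-minimizing slice surface to get a connected cobordism from the unknot, run the induced filtered maps on the Bar-Natan--Turner deformation $\F[X]/(X^2+X)$, and compare $q$-filtration levels of the surviving generators in both directions --- so it is essentially the same (intended) approach. Two small corrections: the cobordism map preserves the homological grading and shifts $q$ by $\tfrac12\chi(\Sigma)=-g$, hence it shifts $\delta=u-q$ by $+g$ rather than preserving it (harmless, since only the $q$-filtration degree enters the argument); and in characteristic two it is Lee's deformation $\F[X]/(X^2+1)=\F[X]/((X+1)^2)$ whose canonical generators degenerate, whereas the Turner/Bar-Natan algebra $\F[X]/(X^2+X)\cong\F\times\F$ is separable with distinct orientation-indexed generators, which is exactly why the Rasmussen-style tracking of canonical classes goes through in this setting.
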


Note  that $2\tilde{s}(K)$ and $s(K)$ are not equal for all $K$ (the factor of 2 here is an artifact of our grading convention). Indeed, Seed \cite{Seed} found examples on which the invariants differ, e.g. $K=14^n_{19265}$ has $s(K)=0$ and $\tilde{s}(K)=-1$  \cite[Proof of Theorem 3 and Remark 6.1]{LS2012}.

With this background on Khovanov homology in place, we complete the proof of Theorem \ref{thm:ribbon} and, at the same time, Corollary \ref{cor:ribbon}.  

\begin{proof}[Proof of Theorem \ref{thm:Khdifferent}] Suppose we are given a non-trivial band sum of unknots, $K$, and let $K_i$ denote the knot obtained from $K$ by adding $i$ full twists to the band. Letting $\mathcal{K}_+=K_{i}$, $\mathcal{K}_-=K_{i+1}$, and $\mathcal{K}_0=\text{Unlink}$, we see that the oriented skein exact sequence of Proposition \ref{prp:skein} will be of use once $H_*(\cone{\partial_-\partial_+})$ is calculated in the case of $\mathcal{K}_0=\text{Unlink}$. This can be easily obtained, for instance, by applying the exact sequence to the situation when $\Kcross_+$ and $\Kcross_-$ are unknots with a single positive and negative crossing, respectively. Since $\Khred(\Kcross_+)\cong\bF_{q=0}^{\delta=0}\cong\Khred(\Kcross_-)$ in this case, it follows that 
\[H_*(\cone{\partial_-\partial_+}) \cong \F_{q=0}^{\delta=0} \oplus \F_{q=2}^{\delta={\text -}1}\] where $\Kcross_0$ is the two-component unlink.  

Thus, in the present setting the oriented skein exact sequence becomes   
\[\begin{tikzpicture}[>=latex] 
\matrix (m) [matrix of math nodes, row sep=1.5em,column sep=2em]
{ \cdots & \Khred^{\delta}_{q-2}(K_{i+1}) & \Khred^\delta_q(K_i)  &   \F_{q=0}^{\delta=0} \oplus \F_{q=2}^{\delta={\text -}1}& \Khred^{\delta+1}_{q-2}(K_{i+1}) & \cdots,\\};
\path[->,font=\scriptsize]
(m-1-1) edge[->] (m-1-2)
(m-1-2) edge[->]  (m-1-3)
(m-1-3) edge[->]  (m-1-4)
(m-1-4) edge[->] node[above] {$d$} (m-1-5)
(m-1-5) edge[->] (m-1-6);
\end{tikzpicture}\]
so that $\Khred_{q-2}^{\delta}(K_{i+1}) \cong \Khred_q^\delta(K_i)$ for $q\ne0,2$ or for $\delta\ne -1, 0,1$. This will amount to the key observation; compare Equation (\ref{jones}). More precisely, we have two exact sequences of interest:

\[\begin{tikzpicture}[>=latex] 
\matrix (m) [matrix of math nodes, row sep=1em,column sep=2em]
{0 & \Khred^{-1}_{0}(K_{i+1}) & \Khred^{-1}_2(K_i)  &   \F& \Khred^{0}_{0}(K_{i+1}) & \Khred^{0}_2(K_i) &0\\
0 & \Khred^{0}_{-2}(K_{i+1}) & \Khred^{0}_0(K_i)  &   \F& \Khred^{1}_{-2}(K_{i+1}) & \Khred^{1}_0(K_i) &0\\
};
\path[->,font=\scriptsize]
(m-1-1) edge[->] (m-1-2)
(m-1-2) edge[->]  (m-1-3)
(m-1-3) edge[->]  (m-1-4)
(m-1-4) edge[->] node[above] {$d^2$} (m-1-5)
(m-1-5) edge[->] (m-1-6)
(m-1-6) edge[->] (m-1-7)
(m-2-1) edge[->] (m-2-2)
(m-2-2) edge[->]  (m-2-3)
(m-2-3) edge[->]  (m-2-4)
(m-2-4) edge[->] node[above] {$d^0$} (m-2-5)
(m-2-5) edge[->] (m-2-6)
(m-2-6) edge[->] (m-2-7);
\end{tikzpicture}\]

For all other values of $\delta,q$ the exact sequence gives isomorphism of Khovanov homology groups. Namely, if there is a $\delta\ne -1,0,1$ supporting non-trivial Khovanov homology then for some $q$ we have 
\[\Khred^\delta_q(K_i)\cong \Khred^\delta_{q-2}(K_{i+1})\ncong0\] and, more generally, 
\[\Khred^\delta_q(K_i)\cong \Khred^\delta_{q-2(j-i)}(K_{j})\ncong0\] for all $i,j$. Hence $K_i\simeq K_j$ if and only if $i = j$, since otherwise the respective (finite dimensional) Khovanov homologies differ as graded vector spaces.  

Similarly, if there is a odd integer $q$ supporting non-trivial Khovanov homology then for some $\delta$ we have \[\Khred^\delta_q(K_i)\cong \Khred^\delta_{q-2(j-i)}(K_{j})\ncong0\] for all $i,j$ and the knots $K_i$ and $K_j$ are separated as above.  

To complete the proof then it remains to carefully analyze the case wherein $\Khred(K_i)$ is supported entirely in gradings $\delta=-1,0,1$ and only in (a finite number of)  even gradings $q$. Taking the additional structure of Khovanov homology into account (namely, the canceling differential of Theorem \ref{thm:Turner}) places further constraints on the support of Khovanov homology in this case. For example, since the differential $\boldsymbol d_i$ on the $E_i$-page raises $\delta$-grading by $1-i$, it follows that in the present setting the spectral sequence must collapse on the $E_3$-page for dimension reasons. Indeed, there are at most 3 adjacent $\delta$-gradings supporting $\Khred(K_i)$ ($\delta=-1,0,1$). Furthermore, since the differential $\boldsymbol d_i$ on the $E_i$-page raises $q$-grading by $i$, both the $\boldsymbol d_1$ and the  $\boldsymbol d_3$ differential must vanish: $\Khred(K_i)$ is supported only in even $q$-gradings. To summarize, when  $\Khred(K_i)$ is supported only in gradings $\delta=-1,0,1$ and only in even $q$-gradings, $\boldsymbol d_2$ constitutes the entire canceling differential.

Since $K_i$ is a ribbon knot, hence slice, it must be that $\Khred^0_0(K_i)\ncong 0$ so that $\tilde{s}(K_i)=0$; from the preceding discussion, $H_*\big(\Khred(K_i),\boldsymbol d_2\big)\cong \F^{\delta = 0}_{q=0}$. Moreover, as we have shown that $K_i$ is necessarily non-trivial (appealing to the fact that knot Floer homology detects the unknot; compare Theorem \ref{thm:HFKsame}), it must be that $\dim\Khred(K_i)>1$ (and odd) by applying Kronheimer and Mrowka's detection theorem for Khovanov homology  \cite[Theorem 1.1]{KM2010}. Therefore,
\[\Khred(K_i)\cong \F_{q=0}^{\delta=0}\oplus
\Big(\bigoplus_{\ell\in2\bZ}(\F^{n_\ell})_{q=\ell}^{\delta=-1}\oplus(\F^{n_\ell})_{q=\ell-2}^{\delta=0}
\oplus(\F^{m_\ell})_{q=\ell}^{\delta=0}\oplus(\F^{m_\ell})_{q=\ell-2}^{\delta=1}\Big)\] 
where all but finitely many of the integers $m_\ell,n_\ell$ are 0 (but at least one such is non-zero since $K_i$ is non-trivial). Notice that the domain of $\boldsymbol d_2$ is $\bigoplus_{\ell\in2\bZ}(\F^{n_\ell})_{q=\ell-2}^{\delta=0}\oplus(\F^{m_\ell})_{q=\ell-2}^{\delta=1}$ and the image of $\boldsymbol d_2$ is $\bigoplus_{\ell\in2\bZ}(\F^{n_\ell})_{q=\ell}^{\delta=-1}\oplus(\F^{m_\ell})_{q=\ell}^{\delta=0}$ so that the $n_\ell$ and the $m_\ell$ pair up to cancel all but the vector space $ \F_{q=0}^{\delta=0}$. With this in place, we claim that 
\[\Khred(K_j)\cong \F_{q=0}^{\delta=0}\oplus
\Big(\bigoplus_{\ell\in2\bZ}(\F^{n_\ell})_{q=\ell-2(j-i)}^{\delta=-1}\oplus(\F^{n_\ell})_{q=\ell-2-2(j-i)}^{\delta=0}
\oplus(\F^{m_\ell})_{q=\ell-2(j-i)}^{\delta=0}\oplus(\F^{m_\ell})_{q=\ell-2-2(j-i)}^{\delta=1}\Big)\] 
for all integers $j$. 

Since the long exact sequence induces isomorphisms $\Khred^\delta_q(K_i)\cong\Khred^\delta_{q-2}(K_{i+1})$ away from $q=0,2$ --- in agreement with the claim --- it suffices to lighten notation and consider the case  
\[
\Khred(K_i)\cong \F_{q=0}^{\delta=0}\oplus (\F^{n})_{q=2}^{\delta=-1}\oplus(\F^{n})_{q=0}^{\delta=0}
\oplus(\F^{m})_{q=2}^{\delta=0}\oplus(\F^{m})_{q=0}^{\delta=1}
\]
for at least one of $n$ or $m$ non-zero. Now the exact sequence gives 
\[\begin{tikzpicture}[>=latex] 
\matrix (m) [matrix of math nodes, row sep=1em,column sep=2em]
{0& \Khred^{-1}_{0}(K_{i+1})&\F^n&   \F& \Khred^{0}_{0}(K_{i+1}) & \F^m &0\\
};
\path[->,font=\scriptsize]
(m-1-1) edge[->] (m-1-2)
(m-1-2) edge[->]  (m-1-3)
(m-1-3) edge[->]  (m-1-4)
(m-1-4) edge[->]  node[above] {$d^2$}(m-1-5)
(m-1-5) edge[->]  (m-1-6)
(m-1-6) edge[->]  (m-1-7)
;
\end{tikzpicture}\]
and
\[\begin{tikzpicture}[>=latex] 
\matrix (m) [matrix of math nodes, row sep=1em,column sep=2em]
{0& \Khred^{0}_{-2}(K_{i+1})&\F^{n+1}&   \F& \Khred^{1}_{-2}(K_{i+1}) & \F^m &0\\
};
\path[->,font=\scriptsize]
(m-1-1) edge[->] (m-1-2)
(m-1-2) edge[->]  (m-1-3)
(m-1-3) edge[->]  (m-1-4)
(m-1-4) edge[->]  node[above] {$d^0$}(m-1-5)
(m-1-5) edge[->]  (m-1-6)
(m-1-6) edge[->]  (m-1-7)
;
\end{tikzpicture}\]
leading to 4 cases to consider. 

When $\rk(d^0)=1$ and $\rk(d^2)=0$ we have that 
\[\Khred(K_{i+1})\cong  \F_{q=0}^{\delta=0} \oplus
 (\F^{n-1})_{q=0}^{\delta=-1}\oplus(\F^{n+1})_{q=-2}^{\delta=0}
\oplus
(\F^{m-1})_{q=0}^{\delta=0}\oplus(\F^{m+1})_{q=-2}^{\delta=1}\]
which is impossible: With this form $H_*\big(\Khred(K_{i+1}),\boldsymbol d_2\big)\cong  \F^3$, a contradiction (note that this case does not arise when $n=0$). 

When $\rk(d^0)=1$ and $\rk(d^2)=1$ we calculate that 
\[\Khred(K_{i+1})\cong  \F_{q=0}^{\delta=0} \oplus
(\F^{n})_{q=0}^{\delta=-1}\oplus(\F^{n+1})_{q=-2}^{\delta=0}
\oplus
(\F^{m})_{q=0}^{\delta=0}\oplus(\F^{m+1})_{q=-2}^{\delta=1}
\]
forcing $H_*\big(\Khred(K_{i+1}),\boldsymbol d_2\big)\cong\F^{\delta=0}_{q=-2}$. This has the appropriate dimension, but is supported in $0\ne u=\delta+q=-2$, a contradiction. 

Similarly, when $\rk(d^0)=0$ and $\rk(d^2)=0$ we calculate that 
\[\Khred(K_{i+1})\cong  \F_{q=0}^{\delta=0} \oplus
(\F^{n-1})_{q=0}^{\delta=-1}\oplus(\F^{n})_{q=-2}^{\delta=0}
\oplus
(\F^{m-1})_{q=0}^{\delta=0}\oplus(\F^{m})_{q=-2}^{\delta=1}
\]
so that $H_*\big(\Khred(K_{i+1}),\boldsymbol d_2\big)\cong\F^{\delta=0}_{q=-2}$, again, a contradiction (note that this case does not arise when $n=0$). 

Finally, the remaining case when $\rk(d^0)=0$ and $\rk(d^2)=1$ gives 
\[\Khred(K_{i+1})\cong  \F_{q=0}^{\delta=0} \oplus
(\F^{n})_{q=0}^{\delta=-1}\oplus(\F^{n})_{q=-2}^{\delta=0}
\oplus
(\F^{m})_{q=0}^{\delta=0}\oplus(\F^{m})_{q=-2}^{\delta=1}
\]
so that $\tilde{s}(K_{i+1})=0$, as required, and the result as claimed in the case $j=i+1$. 

With this observation in hand the long exact sequence may be iterated (with  $\rk(d^0)=0$ and $\rk(d^2)=1$) to obtain the general statement, from which it follows immediately that   $\Khred(K_i)\cong\Khred(K_j)$ as graded vector spaces if and only if $i=j$.\end{proof}

\section{Proof of Theorem \ref{thm:identity}}\label{sec:identity}

Suppose that we have a knot $K\subset Y$ with irreducible complement satisfying
$$\mathrm{rank}\ \HFKa(Y,K,\mathrm{\bf top})=\mathrm{rank}\ \HFKa(Y,K,\mathrm{\bf bottom})=1.$$
Then the fibered knot detection theorem \cite[Theorem 1.1]{NiFibered} implies that $K$ is fibered, with fiber surface of genus equal  $g=\mathrm{\bf top}$.  We can now employ the contact invariance of the Floer class associated with the contact structure $\xi_K$ induced by the fibration on $Y\smallsetminus K$ \cite{ThurstonWinkelnkemper}. Recall that if $c_{\rm\bf bot}$ is a 
generator for \[\HFKa(-Y,K,{\rm \bf bottom})\cong\F\langle c_{\rm\bf bot}\rangle,\]
then the contact invariant $c(\xi_K)$ is defined by $$c(\xi_K):=[c_{\rm\bf bot}]\in H_*(\HFKa(-Y,K),\partial_K)\cong \HFa(-Y).$$  Using  mirror duality, the hypothesis that the generator of the top group represents a non-trivial Floer homology class in $\HFa(Y)$ implies that $c(\xi_K)\ne 0\in \HFa(-Y)$.  Similarly, we can consider the mirror $\overline{K}$; that is, regard $K\subset -Y$.  Now the mirror duality property, together with our assumption that the generator of the bottom group is non-trivial in $\HFa(Y)$, implies $c(\xi_{\overline{K}})\ne 0\in \HFa(Y)$.  

Thus the fibered knot $K$ induces contact structures $\xi_K, \xi_{\overline{K}}$ on $Y$ and $-Y$, respectively, each of whose \os\ contact elements is    non-trivial.  By \cite[Theorem 1.4]{Contact}, this implies each contact structure is {\em tight}.  Now observe  that reversing the orientation of $Y$ can be achieved by reversing the orientation on the page of the open book decomposition induced by $K$.  This implies that if $\phi\in \mathrm{MCG(\Sigma_{g,1})}$ is the mapping class element specifying the monodromy for the open book decomposition induced by $K$, then $\phi^{-1}$ represents the monodromy of the open book induced by $\overline{K}$.  

We now appeal to a result of Honda, Kazez and Mati\'c \cite[Theorem 1.1]{HKMVeerI} which states that a contact structure  is tight if and only if every open book decomposition supporting it has {\em right-veering} monodromy.  It follows immediately from the definition that the only element $\phi\in \mathrm{MCG(\Sigma_{g,1})}$ such that  $\phi$ and $\phi^{-1}$ are both right-veering is the trivial mapping class.  Thus the monodromy of $K$ is isotopic rel boundary to the identity.    

To finish the argument,  we observe that the complement of the binding of the open book with trivial monodromy is homeomorphic to $S^1\times \Sigma_{g,1}\cong \#^{2g}S^1\times S^2\smallsetminus B$ and (as noted in \cite{NiSimple}) that work of Gabai shows that knots in $\#^{2g}S^1\times S^2$ are determined by their complement   \cite[Corollary 2.14]{Gabai6}. \hfill$\ensuremath\Box$\ {\em Theorem \ref{thm:identity}}

\begin{proof}[Proof of Corollary \ref{cor:links}]\ons \ show that with a link $L\subset Y$ of $|L|$ components, one can associate a knot $\kappa(L)\subset Y\#^{|L|-1}S^1\times S^2$ \cite[Section 2.1]{Knots}. 
  This construction served as their original definition of the knot Floer homology of $L$:
  $$ (\HFKa(Y,L),\partial_L):= (\HFKa(Y\#^{|L|-1}S^1\times S^2,\kappa(L)),\partial_{\kappa(L)}).$$
From their definition, it is easy to see that $\kappa(L)$ is obtained from $L$ by plumbing $|L|-1$ copies of the unique fibered link with identity monodromy and fiber surface an annulus to the fiber surface for $L$, in such a way as to make the boundary of the resulting surface connected (c.f. \cite[Lemma 4.4]{NiSutured}).  Gabai showed that any plumbing of fiber surfaces is a fiber surface  \cite{MR718138}.  Moreover, the monodromy of the plumbings' fibration is the composition of those of the plumbands.  Thus, if $L$ is a fibered link with genus $g$ fiber surface and  monodromy isotopic to the identity, then $\kappa(L)$ is the  fibered knot of genus $g+|L|-1$ with identity monodromy.  The previous theorem says that $\kappa(L)$ is detected by its knot Floer homology.  Hence $L$ is detected by its knot Floer homology, by the definition of the knot Floer homology of a link.\end{proof}
Before turning to the proof of Corollary \ref{cor:word-problem}, recall that a finitely generated group $G$ has solvable word problem if, given a product of generators $w$ (a {\em word} in the generators of $G$), there exists an algorithm to decide if $w$ represents the trivial element in $G$. 

\begin{proof}[Proof of Corollary \ref{cor:word-problem}]
Let $\mathrm{MCG(\Sigma_{g,k},\partial)}$ denote the mapping class group of  diffeomorphisms of a genus $g$ surface with $k$ boundary components which fix the boundary pointwise.  Given $\phi\in\mathrm{MCG(\Sigma_{g,k},\partial)}$, let $Y=Y(\phi)$ denote the $3$-manifold specified by the  open book decomposition associated with $\phi$, and let  $L$ denote the binding. By Corollary \ref{cor:links} the knot Floer homology groups $\HFKa(Y\#^{|L|-1}S^1\times S^2,\kappa(L))$, together with their canceling differential $\partial_{\kappa(L)}$, will certify whether $\phi$ is trivial, so it remains to verify that there is an algorithm for computing this information.  This is provided by the Sarkar-Wang algorithm for computing the {\em hat} Floer homology groups of an arbitrary $3$-manifold, as well as the filtered homotopy type of the filtration of the {\em hat} complex induced by an arbitrary knot therein \cite{SW2010}. This algorithm may be adapted to the setting at hand, namely, for a  mapping class $\phi$ expressed as a word in a generating set of Dehn twists --- see Plamenevskaya \cite{MR2350279} for an approach catered to the relevant contact-geometric information used by Theorem \ref{thm:identity}.  Using the reduction process, this yields the knot Floer homology groups of $\kappa(K)$ and the canceling differential.  
\end{proof}

\section{Proof of Theorem \ref{thm:obstruction}} \label{sec:obstruction}

Noting that the genus detection of knot Floer homology \cite[Theorem 1.2]{GenusBounds} identifies the Seifert genus  with the top-most Alexander grading in the support of knot Floer homology, suppose we are given $K\subset S^3$ for which the knot Floer homology groups satisfy:
\begin{enumerate}
\item[(i)]   $\tau(K)=g$
\item[(ii)] $\HFKa_{-1}(K, g)=\HFKa_{-1}(K,g-1)=0,$
\end{enumerate}
where $g$ is the Seifert genus.  We wish to show that no such $K$ exists.  

We will use the surgery formula for knot Floer homology, together with a result of Rasmussen.    Associated with an oriented rational homology $3$-sphere $Y$ equipped with a $\SpinC$ structure $\spinc$ is the \os \ ``correction term" \cite[Section 4]{AbsGrad} (see below for the definition).   Denoted $d(Y,\spinc)$, this is a $\Q$-valued invariant derived from the $\F[U]$-module structure on the Heegaard Floer homology of $Y$ (see Fr\o yshov \cite{FroyshovD} for the Seiberg-Witten motivation for these invariants).   In the case at hand, let $d(S^3_n(K), \spinc_m)$ denote the correction term associated with the $\SpinC$ structure $\spinc_m$ on $S^3_n(K)$, which was defined before the statement of the surgery formula in Section \ref{sec:back}.

We can compare the $d$-invariants for surgeries on a knot $K$ to the $d$-invariants for the corresponding surgery on the unknot by  defining:
\[ \overline{h}_m(K):= \frac{d(S^3_n(\mathrm{Unknot}),\spinc_m)-d(S^3_n(K),\spinc_m)}{2}\]
This is essentially an invariant  defined by Rasmussen \cite{RasThesis,RasGoda} in analogy to an invariant of  Fr{\o}yshov from instanton homology \cite{FroyshovH}.   It has a $4$-dimensional interpretation as the rank of the kernel of the map on $\HFp$ induced by the $\SpinC$ $2$-handle cobordism $(-W'_n,\spinct_m)$, restricted to the image of $\HFinf$, though this interpretation will not be needed in our discussion.

The careful reader will note that $\overline{h}_m(K)$ differs from Rasmussen's invariant, given by \cite[Equation (2)]{RasGoda}:  
\[h_m(K)=\frac{d(S^3_{-n}(K),\spinc_m)- d(S^3_{-n}(\mathrm{Unknot}),\spinc_m)}{2}\]
(the unknot term is denoted $E(n,m)$ in \cite{RasGoda}).    We claim that  $$\overline{h}_m(K)=h_m(\overline{K}).$$  To see this, note first that $S^3_n(K)=-S^3_{-n}(\overline{K})$ and that $\overline{\mathrm{Unknot}}=\mathrm{Unknot}$.  Now recall that the $d$-invariants reverse sign when the orientation of a  $3$-manifold is reversed \cite[Proposition 4.2]{AbsGrad}:
$$d(-Y,\spinc)=-d(Y,\spinc).$$
The stated relationship follows immediately.  The reader may worry that the labeling conventions for $\SpinC$-structures on $S^3_n(K)$ and $S^3_{-n}(K)$ disagree.  However, it is easy to see that they are forced to agree up to the sign of $m$. Since the resulting invariants $\overline{h}_m$ and $h_m$ are independent of the sign of $m$ (by conjugation invariance of Floer homology),  we make no effort to make this technicality precise.  

A key tool is an inequality satisfied by $h_m$, due to Rasmussen.  This inequality can be viewed as a Heegaard Floer analogue of a result for instanton homology proved by Fr\o shov \cite{Froyshov}.  To state it, let $K\subset S^3$, and let
  $g_4(K)$ be its smooth $4$-ball genus. Then \cite[Theorem 2.3]{RasGoda} states that $h_m(K) = 0$ for $|m|\geq
  g_4(K)$, while for $|m|< g_4(K)$ we have
\begin{equation*}
h_m(K) \leq \Bigr\lceil \frac{g_4(K) - |m|}{2} \Bigl\rceil.
\end{equation*}
 
In the case at hand, we have observed that a knot with the putative Floer homology will have $\tau(K)=g(K)$ which, when combined with the four-ball genus bound \cite[Corollary 1.3]{FourBall}  $|\tau(K)|\le g_4(K)$  shows that $g_4(K)=g(K)$.   Since the $4$-genus is  invariant under taking mirrors,  Rasmussen's bound for $h_m$ shows that 
$$ h_{g-2}(\overline{K})\le 1.$$
We will show that our assumptions on the knot Floer homology groups, together with the surgery formula, implies  $\overline{h}_{g-2}(K)\ge 2$, thus arriving at a contradiction to prove the theorem.

\vskip0.1in

\begin{figure}
\begin{tikzpicture}[>=latex] 
\draw[lightgray, fill=lightgray] (0,0) -- (0,1.5) -- (-2.5,1.5) -- (-2.5,3.75) -- (4.25,3.75) -- (4.25,-2) -- (0,-2) -- (0,0);

\draw[lightgray, pattern=north east lines]  (-2.5,1.5) -- (-1.5,1.5) -- ( 0.75,3.75) -- (-2.5, 3.75) ;

\node at (1.5,4) (a) {\phantom{\large$\bullet$}};

\node at (1,3.5) (1) {\large$\bullet$}; 
\draw[red, thick] (a) edge[out=170,in=80,->] (1);

\node at (.5,3) (2) {\large$\bullet$}; 
\draw[thick, red] (1) edge[out=170,in=80,->] (2);

\node at (0,2.5) (3) {\large$\bullet$}; 

\draw[thick, red] (2) edge[out=170,in=80,->] (3);

\node at (-0.5,2.0) (4) {\large$\bullet$}; 
\draw[thick, red] (3) edge[out=170,in=80,->] (4);

\node at (-1,1.5) (5)  {\large$\bullet$}; 
\draw[thick, red] (4) edge[out=170,in=80,->] (5);

\node at (-1.1,-2.2) {\footnotesize{$-2$}};
\node at (.5,-1.6) {\footnotesize{$j$-axis}};

\draw[thick, dashed] (-1,-2) -- (-1,3.5);
\node at (4.625,0) {\footnotesize{$0$}};
\node at (3.4,-.2) {\footnotesize{$i$-axis}};

\node at (-0.00,-2.2) {\footnotesize{$0$}};
\node at (4.75,2.0) {\footnotesize{$g-1$}};
\node at (4.75,2.5) {\footnotesize{$g$}};
\draw[thick, dashed] (-2.5,2.0) -- (4.25,2.0);
\draw[thick, dashed] (-2.5,2.5) -- (4.25,2.5);
\node at (4.75,1.5) {\footnotesize{$g-2$}};
\draw[thick, dashed] (-2.5,1.5) -- (4.25,1.5);
\draw[ultra thick,->] (0,-2) -- (0,4);
\draw[ultra thick,->] (-2.5,0) -- (4.5,0);
\end{tikzpicture}
\caption{The shaded region in the figure indicates the portion of the $(i,j)$-plane specifying the ``hook" complex $A_{g-2}$.  The dots represent  non-trivial homology classes which exist, due to the assumption $\tau(K)=g$,  in the $E_1$ term of the spectral sequence associated with the horizontal filtration of $A_{g-2}$ (by the $i$-coordinate), and the red arrows represent the endomorphism $U$.  The hatching indicates a region  of $\CFKinf$ without generators. The left-most dot has Maslov grading $-4$. }\label{fig:hook}\end{figure}
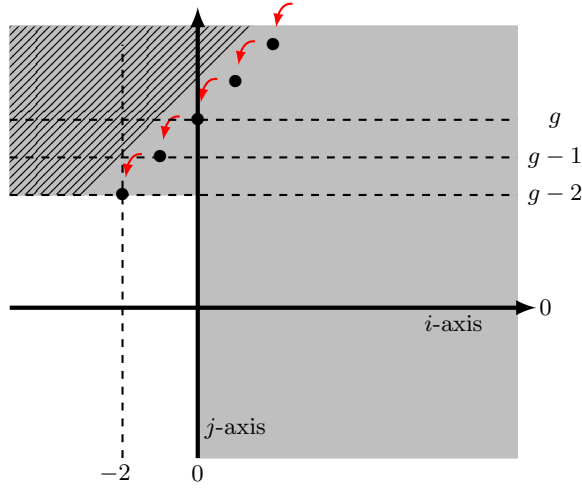

Let $A_{g-2}$ denote the quotient complex of $\CFKinf(K)$ given by \[ A_{g-2}:= \frac{\CFKinf(K)}{\Filt^{-1}(\{i< 0\},\{j<g-2\})}\] i.e. the complex generated by basis elements whose bifiltration coordinates satisfy the  constraint $\mathrm{max}(i,j-g+2)\ge0$. Geometrically, we think of this quotient complex as a ``hook" in $\CFKinf(K)$, as shown in Figure \ref{fig:hook}.   The surgery formula  \cite[Theorem 4.4]{Knots} states that for  $n\ge 2g(K)-1$, the Floer homology of $\HFp(S^3_n(K),\spinc_{g-2})$ is the homology of this quotient:  
\begin{equation}\label{eq:surgery}
\HFp_{*+s}(S^3_n(K),\spinc_{g-2})\cong H_{*}( A_{g-2}),
\end{equation}
where the shift $s$ in the Maslov grading on $\CFKinf(K)$  is independent of the knot $K$ (it depends only on the surgery coefficient $n$ and the Chern class of the extension of the $\SpinC$ structure over the 2-handle cobordism).     The correction term is defined via the $\F[U]$ module structure in a dual  manner to our characterization of $\tau(K)$ in terms of the module structure on $\HFKm$:
$$ d(Y,\spinc):= \underset{\xi \in \HFp(Y,\spinc)}{\mathrm{min}}\{ \mathrm{gr}(\xi)\  |\ \xi \in \mathrm{Im}\ U^{d} \ \forall d>0 \}.$$
Thus, according to \eqref{eq:surgery}, we have 
$$ d(S^3_n(K),\spinc_{g-2})= s+ \underset{\xi \in H_{*}(A_{g-2})}{\mathrm{min}}\{ \mathrm{gr}(\xi)\  |\ \xi \in \mathrm{Im}\ U^{d} \ \forall d>0 \}. $$
For the unknot, the second term is easily seen to be zero.  Thus $\overline{h}_{g-2}$ is given by:
$$  -2 \overline{h}_{g-2}(K)= \underset{\xi \in H_{*}( A_{g-2})}{\mathrm{min}}\{ \mathrm{gr}(\xi)\  |\ \xi \in \mathrm{Im}\ U^{d} \ \forall d>0 \}. $$
It suffices to show that the right-hand quantity is less than or equal to $-4$.  To do this, note that the quotient complex $A_{g-2}$ inherits a $(\Z\times\Z)$-filtration from $\CFKinf(K)$.  Using the $\Z$-filtration coming from the $i$-coordinate, in particular, we obtain a spectral sequence of $\F[U]$-modules.   Now consider a cycle representative for $H_*(\HFKa(K),\partial_K)\cong\F$.  Any such cycle lives in Alexander grading $\tau(K)=g$, by assumption (i), and the  $U^k$ translates of  this cycle   generate a non-trivial homology class in each filtration level $i\ge -2$ in the $E_1$ page of the spectral sequence. See Figure \ref{fig:hook}. Moreover, the class in filtration $-2$ has grading $-4$ and is in the image of $U^d$ for all $d$.   Call this class $\alpha$.     

Now observe that for each filtration $i\ge 0$, we have $E^i_1\cong \F_{(2i)}$, where the superscript indicates the filtration level. This happens  because $H_*(\HFKa(K),\partial_K)\cong \F$, and for each $i\ge 0$ the sub-quotient of $A_{g-2}$ with fixed $i$ is isomorphic to $(\HFKa(K),\partial_K)[2i,i])$. Thus the only way for $\alpha$ to die in the spectral sequence is if it is  the boundary of a chain in $E^{-1}_1$ under the $d_1$ differential.   Assumption (ii), however, implies that there are  no chains in $E^{-1}_1$ (or even in $E^{-1}_0$) with Maslov grading $-3$.  Thus $\alpha$  generates a non-trivial homology class in $H_*(A_{g-2})$ which is in the image of $U^d$ for all $d$. \hfill$\ensuremath\Box$\ {\em Theorem \ref{thm:obstruction}} 

The utility of Theorem \ref{thm:obstruction} is illustrated through an example in Figure \ref{fig:trefoil}.    

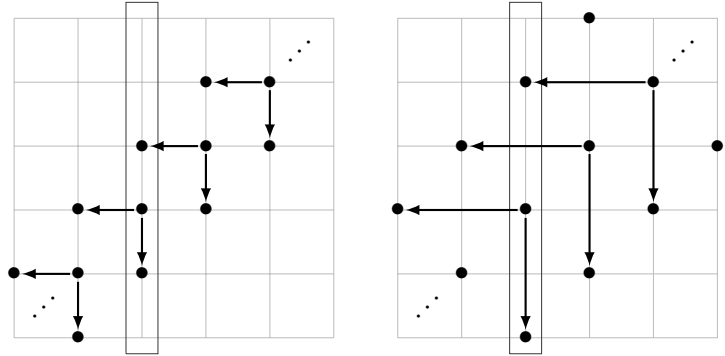
\begin{figure}[ht!]\begin{tikzpicture}[scale=0.85,>=latex]
\draw[step=1,lightgray] (0,0) grid (5,5);
\draw[darkgray] (1.75,-0.25) -- (1.75,5.25) -- (2.25,5.25) -- (2.25,-0.25) -- (1.75,-0.25);
\draw[step=1,lightgray] (6,0) grid (11,5);
\draw[darkgray] (7.75,-0.25) -- (7.75,5.25) -- (8.25,5.25) -- (8.25,-0.25) -- (7.75,-0.25);
\node at (0,1) {$\bullet$}; 
\node at (1,0) {$\bullet$}; 
\node at (1,1) {$\bullet$}; 
\node at (1,2) {$\bullet$}; 
\node at (2,1) {$\bullet$}; 
\node at (2,2) {$\bullet$}; 
\node at (2,3) {$\bullet$}; 
\node at (3,2) {$\bullet$}; 
\node at (3,3) {$\bullet$}; 
\node at (3,4) {$\bullet$}; 
\node at (4,3) {$\bullet$}; 
\node at (4,4) {$\bullet$}; 
\node at (4.5,4.5) {\rotatebox{45}{$\cdots$}};
\node at (0.5,0.5) {\rotatebox{45}{$\cdots$}};
\draw[thick,->,shorten <=3pt,shorten >=3pt] (1,1) -- (0,1);\draw[thick,->,shorten <=3pt,shorten >=3pt] (1,1) -- (1,0);
\draw[thick,->,shorten <=3pt,shorten >=3pt] (2,2) -- (1,2);\draw[thick,->,shorten <=3pt,shorten >=3pt] (2,2) -- (2,1);
\draw[thick,->,shorten <=3pt,shorten >=3pt] (3,3) -- (2,3);\draw[thick,->,shorten <=3pt,shorten >=3pt] (3,3) -- (3,2);
\draw[thick,->,shorten <=3pt,shorten >=3pt] (4,4) -- (3,4);\draw[thick,->,shorten <=3pt,shorten >=3pt] (4,4) -- (4,3);
\node at (6.5,0.5) {\rotatebox{45}{$\cdots$}};
\node at (7,1) {$\bullet$}; 
\node at (6,2) {$\bullet$}; \node at (8,2) {$\bullet$};  \node at (8,0) {$\bullet$}; 
\node at (7,3) {$\bullet$}; \node at (9,3) {$\bullet$};  \node at (9,1) {$\bullet$}; 
\node at (8,4) {$\bullet$}; \node at (10,4) {$\bullet$};  \node at (10,2) {$\bullet$}; 
\node at (9,5) {$\bullet$}; \node at (11,3) {$\bullet$}; 
\draw[thick,->,shorten <=3pt,shorten >=3pt] (8,2) -- (6,2);\draw[thick,->,shorten <=3pt,shorten >=3pt] (8,2) -- (8,0);
\draw[thick,->,shorten <=3pt,shorten >=3pt] (9,3) -- (7,3);\draw[thick,->,shorten <=3pt,shorten >=3pt] (9,3) -- (9,1);
\draw[thick,->,shorten <=3pt,shorten >=3pt] (10,4) -- (8,4);\draw[thick,->,shorten <=3pt,shorten >=3pt] (10,4) -- (10,2);
\node at (10.5,4.5) {\rotatebox{45}{$\cdots$}};
\end{tikzpicture}\caption{The knot Floer homology of the right-hand trefoil (left), and a complex that is ruled out as the knot Floer homology of any knot by Theorem \ref{thm:obstruction} (right). The $i=0$ slice has been singled out; each $\bullet$ denotes a copy of $\F$.}\label{fig:trefoil}\end{figure}

\begin{proof}[Proof of Corollary \ref{cor:trefoil}] Assume  $\mathrm{rank}\ \HFKa(K)=3$.   If $\HFKa(K,i)=0$ for all $i\ne 0$, then the  genus of $K$ is zero, and $K$ is the unknot (which has rank one Floer homology).  If $\HFKa_*(K,i)\ne 0$ for some $i\ne 0$, then symmetry implies $\HFKa_{*-2i}(K,-i)\ne 0$.  This accounts for rank $2$.  Thus symmetry implies $\HFKa(K,0)\ne 0$.  Since a canceling differential lowers Maslov grading by one and  $H_*(\HFKa,\partial)$ is supported in grading zero,  the only possibilities for the knot Floer homology groups are 
\[\HFKa_m(K,a)\cong \begin{cases}
\F & m=0,a=i\\
\F & m=1-2i,a=0\\
\F & m=-2i,a=-i\\
0& {\rm otherwise}
\end{cases}
\qquad{\rm or}\qquad
\HFKa_m(K,a)\cong \begin{cases}
\F & m=2i,a=i\\
\F & m=2i-1,a=0\\
\F & m=0,a=-i\\
0& {\rm otherwise}
\end{cases}\]	
In the first case, the only Alexander grading that supports non-trivial Floer homology in Maslov grading zero is $a=i$, hence $\tau(K)=i$.  Now if $i\ne 1$, Theorem \ref{thm:obstruction} tells us $K$ cannot exist; indeed, in this case there is no Floer homology at all in Maslov grading $-1$.  If $i=1$ Ghiggini's theorem \cite[Corollary 1.5]{Ghiggini2007} tells us $K$ is the right-handed trefoil.  Mirror duality implies that if $K$ has knot Floer homology groups as on the right, then $\overline{K}$ has the groups on the left, thus proving the corollary.\end{proof}

\begin{proof}[Proof of Corollary \ref{cor:L-space-knot}] Let $K$ be an L-space knot.  Ozsv\'ath and Szab\'o proved that that $\tau(K)=g$ \cite[Corollary 1.6]{OSz2005-lens}.   This implies that assumption $(i)$ in the statement of Theorem \ref{thm:obstruction} is satisfied and that $\rk\HFKa_0(K,g)>0$. Indeed, in order for $\tau(K)$ to equal the genus, there must be a cycle for $\partial_K$ in the top group of knot Floer homology, and this cycle must lie in Maslov grading zero as it generates the homology $H_*(\HFKa,\partial_K)$.  Moreover, they showed that $\rk\HFKa(K,a)$ is 1 or 0 for every $a$ \ \cite[Theorem 1.1]{OSz2005-lens}.  It follows that the top group of knot Floer homology is supported entirely in grading zero.   Theorem \ref{thm:obstruction} then implies that $\HFKa_{-1}(K,g-1)= 1$. The  corollary follows.
\end{proof}

\section{Questions, conjectures and concluding remarks}\label{sec:conjectures}

\subsection{Abstract infinity complexes}\label{sub:infinity} 
The results of this article, while interesting  --- at least to the authors --- in their own right, serve to highlight how little is know about the geography and botany questions in knot Floer homology. The geography question, in particular, seems quite difficult.  Let us define 

\begin{defn}  An  {\bf (abstract) infinity complex} is a  graded, bifiltered complex $(C,\partial,\Filt)$ satisfying  \begin{enumerate}
\item $(C,\partial)$ is freely generated as complex of modules over  $\F[U,U^{-1}]$ by a finite set of graded, bifiltered homogeneous generators.

\item Acting by   $U$ shifts the grading by $-2$ and the bifiltration by $(-1,-1)$.

\item   $(C,\partial)$ has a global canceling differential, in the sense that $H_*(C,\partial)\cong \F[U,U^{-1}]$, where $1\in \F[U,U^{-1}]$ has grading $0$.
\item The complex $(C,\partial, \Filt^r)$, where $\Filt^r$ is the bifiltration function $\Filt^r(i,j):=\Filt(j,i),$ is $(\Z\times\Z)$-filtered homotopy equivalent to $(C,\partial,\Filt)$
\end{enumerate}
\end{defn}
The discussion of Section \ref{sec:back} indicates that $\CFKinf(K)$ is an abstract infinity complex for any $K$.  Theorem \ref{thm:obstruction} indicates, however, that the most naive guess at an answer to the geography question  ---   that any infinity complex arises as a knot Floer infinity complex  ---  is wrong.  While our theorem places new restrictions on which infinity complexes arise, it offers little insight as to what a general characterization of knot Floer complexes could look like.  Indeed,  the theorem seems to take us further from a conjectural characterization than where we started.    

A potentially more tractable geography question could be phrased in terms of a $4$-dimensional relation placed on knot Floer complexes.  In \cite{HomCable}, Hom defined a $\{-1,0,1\}$-valued invariant of an infinity complex, $C$.  Denoted $\epsilon(C)$, the invariant measures how a generator of  the homology of the vertical complex  interacts with the horizontal components of the differential.  The behavior of $\epsilon$ under duality and tensor products  gives rise to a group $\mathcal{CFK}$, which we call the {\em knot Floer concordance group}, whose elements are $\epsilon$-equivalence classes of infinity complexes. Here, two such complexes $C,D$ are equivalent if $\epsilon(C\otimes D^*)=0$ (with $D^*$  the dual complex), with  the  product and inverse operations in $\mathcal{CFK}$  given by tensor product and duality, respectively \cite{HomCTS,HomGroup}. The primary utility of the knot Floer concordance group  is that it is  the receptor of a natural homomorphism, which we denote $H$,  from the smooth concordance group $\mathcal{C}$. Two  questions  we find interesting are:
\begin{question}  What is $\mathcal{CFK}$?  More precisely, can $\mathcal{CFK}$ be given a presentation in terms of generators and relations?
\end{question}

\begin{question}[$\mathcal{CFK}$ Geography Question] What is the image of Hom's homomorphism $H:\mathcal{C}\rightarrow \mathcal{CFK}$?  In particular, is $H$ surjective?
\end{question}
Work of \cite{HomNxZ,HomCTS} implies that  $\mathcal{CFK}$ (and indeed the image of $H$) is necessarily infinitely generated, and  has a rich filtration structure by Archimedean equivalence classes.  While difficult, there is more hope that both questions could be answered due to the algebraic flexibility that $\epsilon$-equivalence provides, and the abundance of knots whose infinity complexes are now understood.  For instance, the $\epsilon$-equivalence class of the ``shifted trefoil" complex in Figure \ref{fig:trefoil} which is obstructed by Theorem \ref{thm:obstruction}, is realized  by $H([T_{4,5}]-[T_{2,3; 2,5}])$, where $T_{2,3;2,5}$ denotes the $(2,5)$ cable of the trefoil knot \cite{HomPersonal}.   Thus one could retain hope that $H$ is surjective and that the geography question --- up to $\epsilon$-equivalence --- is answered by the naive guess: abstract infinity complexes.

\subsection{Botany and the Berge conjecture}\label{sub:bot-berge} From a topological perspective, the botany problem seems more interesting. Understanding when the question has a finite answer, in particular, seems well-motivated by potential applications to the study of Dehn surgery.  Such applications arise from the following general strategy:
\begin{enumerate}
\item Assume surgery on an (unknown) knot $K$ produces a particular manifold, $Y$ (or a particular type of manifold, e.g. a  lens space).
\item Use the surgery formula to show  this assumption implies $\CFKinf(K)$ is of a particular form.
\item Invoke a finite answer to the botany problem for such $\CFKinf$ to determine  that $K$ is a member of some finite set.
\end{enumerate}

    For instance, this strategy can be used to show that if there is an orientation-preserving diffeomorphism $S^3_n(K)\cong S^3_n(\mathrm{Unknot})$, then $K$ is unknotted;\footnote{This theorem was first proved using a similar strategy for monopole Floer homology, which at the time lacked knot Floer homology \cite{KMOS} cf. \cite{GenusBounds}. It can be reproved using knot Floer homology, as suggested.} that is, the unknot is characterized by any member of its integer surgery spectrum.   Similar results hold for the trefoil and figure eight knots \cite{Trefoil} using the same strategy and the fact that, like the unknot, these knots are determined by their knot Floer homology.  
    
 In a similar vein, the Berge Conjecture asserts that the set of knots in $S^3$ on which one can perform surgery to obtain a lens space is exactly the class which can be placed on the genus two Heegaard surface in such a way that they are {\em doubly primitive}, see \cite[Problem 1.78]{Kirby}, \cite{Berge}.    This conjecture would be implied by an affirmative answer to the following botany conjecture for knot Floer homology of knots in lens spaces:
 \begin{conj}[{\cite[Conjecture 1.5]{BGH}} cf. \cite{Ras2007,LensMe}]  Suppose a knot $K$ in the lens space $L(p,q)$ satisfies  \[\mathrm{dim}\ \HFKa(L(p,q),K)=p.\] Then $K$  is isotopic to the union of two properly embedded chords in a pair of minimally intersecting meridional  disks of the two Heegaard solid tori in $L(p,q)$. 
 \end{conj}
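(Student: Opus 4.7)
The plan is to show that the hypothesis forces the full bigraded, filtered structure of $\CFKinf(K)$ to coincide with that of the simple knot in the same homology class, and then bridge from this algebraic rigidity to geometric uniqueness. First, since $L(p,q)$ is an L-space with $\rk\HFa(L(p,q))=p$, the hypothesis $\rk\HFKa(L(p,q),K)=p$ is saying that $K$ realizes the smallest possible rank. The groups $\HFKa(L(p,q),K)$ split over the $p$ absolute $\mathrm{Spin}^c$ structures on $L(p,q)$, and minimality of total rank forces each $\mathrm{Spin}^c$ summand to contribute exactly one generator. The Maslov grading of each generator is then dictated by the absolute $\Q$-grading on $\HFa(L(p,q))\cong\F^{p}$, and the Alexander grading is constrained by the symmetry property~\eqref{eq:jsym} (suitably generalized to rational homology spheres) together with the classification of relative $\mathrm{Spin}^c$ structures.

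Next, I would try to match $\HFKa(L(p,q),K)$ bigrading-by-bigrading with that of a simple knot $K_{\mathrm{simple}}$ in the same homology class. Simple knots have explicit $\CFKinf$ complexes computed by Ozsv\'ath--Szab\'o and Rasmussen, so this reduces to a calculation of the Alexander gradings of the $p$ generators, which in principle is determined by the above constraints. With the bigraded groups matched, the natural next step is to compare the filtered homotopy type of $\CFKinf(K)$ with $\CFKinf(K_{\mathrm{simple}})$ by using surgery. The analogue for knots in lens spaces of the surgery formula stated in Section~\ref{sec:back} relates the filtered complex of $K$ to the Heegaard Floer complexes of integer surgeries on $K$, and in particular controls all $d$-invariants of such surgeries. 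If one could show that these $d$-invariants must agree with those of the corresponding surgeries on $K_{\mathrm{simple}}$, one would get the $(\Z\times\Z)$-filtered chain homotopy equivalence $\CFKinf(K)\simeq\CFKinf(K_{\mathrm{simple}})$.

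The main obstacle, and really the heart of the conjecture, is the final step from algebraic agreement to geometric identification. Theorem~\ref{thm:ribbon} already shows that even complete $\HFKm$ data cannot in general determine a knot, so there is no \emph{a priori} reason why the botany problem should be finite for knots of minimal Floer rank in lens spaces. A natural route is to first reduce to knots admitting a genus-one bridge presentation (a $(1,1)$-presentation): such knots are classified combinatorially, and simple knots are precisely those $(1,1)$-knots arising from pairs of minimally intersecting meridian disks, so within this class one could hope to match the Floer data directly to the combinatorial presentation. Showing that minimal-rank Floer homology forces a $(1,1)$-presentation is essentially the content of the Berge conjecture and remains wide open; any complete proof would almost certainly require new geometric input (e.g.\ a complement-detects-knot result in lens spaces in the spirit of Gabai's theorem invoked for Theorem~\ref{thm:identity}) beyond the purely Floer-theoretic machinery surveyed in Section~\ref{sec:back}.
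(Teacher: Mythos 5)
You have been asked to prove a statement that this paper does not prove and does not claim to prove: it is stated verbatim as a conjecture (Conjecture 1.5 of Baker--Grigsby--Hedden, cf.\ Rasmussen and Hedden), and the paper records it precisely because it is open --- indeed, the surrounding text points out that an affirmative answer would imply the Berge conjecture. So there is no proof in the paper to compare your proposal against, and your proposal is not a proof either; it is a strategy outline whose final step you yourself concede ``remains wide open.''

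Beyond that honest concession, the intermediate steps are also gapped. The hypothesis $\mathrm{rank}\,\HFKa(L(p,q),K)=p$ does force one generator in each $\SpinC$ structure, equivalently $\partial_K\equiv 0$ (as the paper notes), but it does not by itself pin down the Alexander gradings of those generators, nor does matching the bigraded groups with those of the simple knot give you the $(\Z\times\Z)$-filtered homotopy type of $\CFKinf(K)$: your proposed route via $d$-invariants of surgeries only constrains the filtered complex, and no argument is given that these constraints determine it. Most importantly, even a complete identification $\CFKinf(K)\simeq\CFKinf(K_{\mathrm{simple}})$ would not finish the job --- Theorem \ref{thm:ribbon} of this very paper shows that knots in general are not determined by their Floer invariants, and no analogue of the genus/fiberedness detection arguments used for the unknot, trefoil, and figure eight detection theorems is available here. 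The passage from Floer-theoretic simplicity to a $(1,1)$-position realized by minimally intersecting meridian disks is exactly the content of the conjecture, so what you have written is a restatement of the problem, not a solution to it.
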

 
Note that there is a canceling differential $\partial_K$ on $\HFKa(L(p,q),K)$, whose homology has dimension $p$.  The dimension assumption of the theorem is therefore equivalent to $\partial_K\equiv 0$.  In the case of $S^3$, such an assumption implies that $K$ is unknotted. There was a similar  conjecture that a pair of knots  in $L(p,q)$, each of which has Floer homology of rank $p+2$, are detected by knot Floer homology \cite[Conjecture 1.6]{BGH} \cite[Figure 3]{LensMe}  (this is the lens space analogue of Corollary \ref{cor:trefoil}), however this was disproved by Baker and Hoffman \cite[Theorem 1]{BakerHoffman}.


Returning to the botany question for knots in the $3$-sphere, Theorem \ref{thm:ribbon} suggests that the knot Floer homology of most knots will be realized infinitely often.  Indeed, Theorem \ref{thm:ribbon}, combined with the behavior of knot Floer homology under satellite operations, can be used to quite flexibly produce infinite families of distinct knots with identical knot Floer homology groups (or infinity chain complexes)  having various prescribed qualities.  For instance, there are infinite families of ``thin" knots with the same Floer homology i.e. knots whose hat Floer homology groups plotted in the Maslov-Alexander plane are supported on a single diagonal.  Indeed, any knot of the form $B\# \overline{B}^r$ where $B$ is $2$-bridge will be thin and a member of $\mathcal{R}$, hence will produce an infinite family of thin knots with the same Floer homology by Theorem \ref{thm:ribbon}.  Note that by  \cite[Second paragraph of pg. 246]{AltKnots}, the filtered homotopy type of $(\CFKinf,\partial^\infty)$ of a thin knot is determined by the knot Floer homology groups  (cf. \cite[Theorem 4]{Petkova}), so that these families actually have the same infinity complexes. One can similarly produce examples of families with arbitrary $\tau$, families all of whose members are fibered (see, for example, Theorem \ref{thm:fail} and the accompanying discussion in Section \ref{sub:Kanenobu}), families whose Floer homology has arbitrary width, and families whose Alexander polynomial is arbitrarily prescribed, by appealing to the K{\"u}nneth formula and formulas for  the knot Floer homology of Whitehead doubles or cables.  

One class of knots whose Floer homology we find difficult to replicate by our constructions are the fibered knots which induce the standard tight contact structure on the 3-sphere. Combining the fibered knot detection of Floer homology and \cite[Proposition 2.1]{Hedden2010}  this class can be defined Floer theoretically as the class of knots satisfying 
\[\HFKa(K,g)=\F\ \text{supported in Maslov grading} \ 0, \ \ \text{and} \ \tau(K)=  g,\]
where $g$ is the Seifert genus.  This class is also equivalent, again by \cite[Proposition 2.1]{Hedden2010} and Ni's fibered knot detection theorem, to the class of {\em strongly quasipositive} fibered knots.  Our efforts make the following question seem natural:
\begin{question} Are there infinitely many distinct strongly quasipositive fibered knots with the same knot Floer homology?
\end{question}
If not, then this would be one class of knots which knot Floer homology  coarsely detects.\footnote{We'll say an invariant {\em coarsely detects} a topological object $X$ if only finitely many objects have the same invariant as $X$.}  While we expect, however, that there are infinitely many strongly quasipositive fibered knots with the same Floer homology, one way to approach a coarse detection theorem  would be through the following simpler question:
\begin{question} Are there infinitely many distinct strongly quasipositive fibered knots with the same Alexander polynomial?
\end{question}
In fact the answer to this question is yes, as was pointed out to us by Sebastian Baader; compare \cite{Baader}.   One can plumb a positive Hopf band to the fiber surface of the positive $(2,6)$-torus link using infinitely many distinct isotopy classes of proper arcs, all of which are homologous.  This can produce, for instance, infinitely many distinct fibered strongly quasipositive knots with Alexander polynomial equal to that of the $(2,7)$-torus knot. 

\subsection{Coarse detection of L-space knots}\label{sub:coarse-L-space} While we are pessimistic that knot Floer homology coarsely  detects strongly quasipositive fibered knots, one might be hopeful that it does so for a very interesting subset of these, the so-called L-space knots.  Recall that a knot is a (positive) L-space knot if $S^3_n(K)$ is an L-space for some $n>0$.  The simplicity of the Floer homology of L-spaces implies, by the surgery formula, that the knot Floer homology of L-space knots is tightly constrained; see Table \ref{tab:L}.  Indeed, this is the context where the strategy outlined above is likely to be most fruitful, but where   botany results  are  lacking.  

\begin{table}[ht]
\begin{tabular}{ll} 
\toprule
   {\sc Property} & {\sc  Attribution}\\
   \midrule 
    {\small$g(K)=g_4(K)=\tau(K)=s(K)$ }& Various authors, see {\small  \cite{Hedden2010}} \\[2pt]
    {\small The coefficients of $\Delta_K(t)$ take values in $\{-1,0,1\}$} & {\small Ozsv\'ath and Szab\'o \cite[Theorem 1.2]{OSz2005-lens}}\\[2pt]
    {\small $K$ is fibered} & {\small Ghiggini \cite{Ghiggini2007}, Ni \cite{NiFibered}} \\[2pt]
    {\small $K$ is strongly quasipositive} & {\small  \cite[Proposition 2.1]{Hedden2010}} \\[2pt]
    {\small $K$ induces the standard tight contact structure on $S^3$} & {\small  \cite[Proposition 2.1]{Hedden2010}}\\[2pt]
    {\small $\rk\HFKa(K,a)=1$ for $a=g,g-1$}  & {\small Theorem \ref{thm:obstruction}; Corollary \ref{cor:L-space-knot}}\\
  \bottomrule\medskip
\end{tabular}\caption{Some properties of L-space knots.}\label{tab:L}\end{table}

We make the following conjecture:

\begin{conj}\label{finiteLspace} Let $K$ be an L-space knot.  Then there are only finitely many other knots whose Floer homology is isomorphic to $\HFKa(K)$.
\end{conj}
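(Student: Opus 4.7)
The plan is to reduce the botany question for an L-space knot to a finiteness question about Dehn fillings, and then to attack the latter using classification results for exceptional surgeries.

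\emph{Stage 1: Hat Floer homology forces the full infinity complex.} Suppose $\HFKa(J) \cong \HFKa(K)$. The Ozsv\'ath--Szab\'o classification \cite[Theorem 1.2]{OSz2005-lens} implies that $\HFKa(K)$ has rank at most $1$ in each Alexander grading, assembled into a staircase pattern determined by $\Delta_K(t)$. Applying the Reduction Lemma, together with the horizontal differentials forced by reversal insensitivity and the global canceling-differential constraint on $\CFKinf$, one expects to show that the staircase shape leaves no room for diagonal differentials, so $\CFKinf(J) \cong \CFKinf(K)$ as $(\Z\times\Z)$-filtered complexes. In particular $J$ is itself an L-space knot.

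\emph{Stage 2: Pass to Dehn surgeries.} Given $\CFKinf(J) \cong \CFKinf(K)$, the surgery formula yields isomorphisms of $\F[U]$-modules $\HFp(S^3_n(J),\spinc) \cong \HFp(S^3_n(K),\spinc)$ for every integer $n \geq 2g(K)-1$ and every $\SpinC$-structure $\spinc$. Both $S^3_n(J)$ and $S^3_n(K)$ are therefore L-spaces with identical $d$-invariants in every $\SpinC$-structure. When the surgery is a lens space, $d$-invariants are a complete invariant, and this immediately upgrades to an orientation-preserving homeomorphism; more generally, the entire surgery family $\{S^3_n(J)\}_{n\geq 2g-1}$ together with its Heegaard Floer $\F[U]$-module data coincides with that of $K$, yielding a very rigid constraint on the exterior of $J$.

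\emph{Stage 3: Finiteness of knots sharing an L-space surgery.} The remaining, and decisive, step is to bound the number of knots in $S^3$ whose exteriors fill along a common integer slope to a fixed closed L-space. In the lens-space case this is essentially the Berge conjecture, so the conjecture we wish to prove subsumes an open problem of considerable depth, and a full resolution will almost certainly require new ideas. A natural attack in the hyperbolic case combines Thurston's hyperbolic Dehn surgery theorem---only finitely many slopes on a given hyperbolic knot exterior produce any specified filled manifold---with Gabai's theorem that knots in $S^3$ are determined by their complements, reducing the problem to controlling the set of candidate knot exteriors. The hardest part will be bounding this set uniformly over all $J$ sharing the Floer data of $K$, which appears to require substantial input either from hyperbolic geometry (volume bounds together with J\o rgensen--Thurston finiteness) or from a direct advance on the cosmetic surgery conjecture for L-space surgeries.
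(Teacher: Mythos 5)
You are attempting to prove Conjecture \ref{finiteLspace}, which the paper states as an \emph{open conjecture}: there is no proof of it anywhere in the paper, only motivating discussion in Section \ref{sub:coarse-L-space} (including the remark that the conjecture would imply finiteness of knots yielding a fixed lens space by surgery, a statement itself only known modulo the Berge Conjecture). Your proposal is likewise not a proof, and you say as much in Stage 3: the decisive finiteness step is left resting on unresolved problems (Berge-type finiteness, cosmetic-surgery-type control, or an unspecified uniform hyperbolic-geometry bound). So the honest assessment is that the argument has a genuine gap exactly where the conjecture is hard, and no amount of polishing Stages 1--2 closes it.

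Beyond that admission, there are concrete problems with the intermediate stages. Stage 1 is plausible and consistent with the paper's remark that the hat groups of an L-space knot determine $\CFKinf$ up to homotopy (as for thin knots), though you would still need to write the argument showing that a staircase-shaped $\HFKa(J)$ admits no other filtered extension. Stage 2 overreaches: the surgery formula gives isomorphisms of $\F[U]$-modules $\HFp(S^3_n(J),\spinc)\cong \HFp(S^3_n(K),\spinc)$, hence equal $d$-invariants, but this is far from a homeomorphism of the surgered manifolds. Even when $S^3_n(K)$ is a lens space, all you learn about $S^3_n(J)$ is that it is an L-space with the same Floer modules --- you do not know it is a lens space, let alone the same one, and identifying it as such is essentially the lens-space realization problem again. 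This undermines the framing of Stage 3, which presupposes a \emph{fixed} closed target manifold that all the exteriors fill to; what Stages 1--2 actually deliver is only a fixed Floer homology type, and there is no known finiteness statement for L-spaces (or even lens spaces) sharing Heegaard Floer data that you could quote. Finally, the Thurston--Gabai argument you sketch is misapplied: Thurston's hyperbolic Dehn surgery theorem bounds the exceptional or repeated-filling slopes on \emph{one fixed} hyperbolic exterior, whereas the problem here is to bound the \emph{set of exteriors}; Gabai's theorem then only says each exterior corresponds to at most one knot, so no finiteness of knots follows. In short, the statement is and remains a conjecture, and your proposal does not advance past the point at which the paper itself stops.
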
 

It should be pointed out that the structure of $\HFKa$ of an L-space knot implies that the {\em hat} Floer homology groups determine $\CFKinf$, up to homotopy, much in the same way that the infinity complex of a thin knot is determined by its knot Floer homology groups. If true, the conjecture  would have as corollary that there are only finitely many knots on which a fixed lens space can be obtained via Dehn surgery; this corollary is also implied by the Berge Conjecture.   Of relevance here is the fact that there are known pairs of knots which admit L-space surgeries and share the same Floer homology, the simplest  being   the $(2,3)$-cable of the trefoil and the $(3,4)$-torus knot \cite{CablingII}.

Notice that Conjecture \ref{finiteLspace} would follow were it known that any L-space knot could be represented as the closure of a positive braid. This is not the case, however, and we are grateful to Cameron Gordon, Jen Hom and Tye Lidman for pointing out that the $(2,3)$-cable of the trefoil is an L-space knot that cannot be represented as the closure of a positive braid. Indeed,  a result of Cromwell gives $4g$ as upper bound on the number of crossings in a diagram of a fibered, positive knot, where $g$ is the genus of the braid closure \cite[Corollary 5.1]{Cromwell1989}; compare \cite[Section 3.1]{Stoimenow2002}. In this example then we obtain an upper bound of 12, but a direct check of the knot tables certifies that the minimal crossing number of this cable (let alone a purported positive braid diagram) exceeds this bound. 

Work of the first author alluded to above implies that any L-space knot can be represented as the closure of  a strongly quasipositive braid \cite[Corollary 1.4]{Hedden2010}. By plumbing positive Hopf bands one can show that there are infinitely many distinct strongly quasipositive fibered knots of any genus greater than one, in contrast to the finiteness of positive braid closures with bounded genus. As such, Conjecture \ref{finiteLspace} places the class of L-space knots in a perhaps interesting tension between the classes of positive and quasipositive braids. 

An affirmative answer to the following two conjectures     could be helpful in understanding the botany problem for L-space knots  (see \cite{BakerMoore} for several  other interesting conjectures, and a nice discussion of  L-space knots):
\begin{conj}
Any L-space knot admits a strong inversion.
\end{conj}
{\em Update:} This has been disproved by Baker and Luecke \cite{BakerLuecke}. See also \cite[Conjecture 30]{WatsonSymm} and the accompanying discussion. 
\begin{conj}[{\cite[Conjecture 4]{LidmanMoore}}] An L-space knot contains no essential Conway spheres in its complement.
\end{conj}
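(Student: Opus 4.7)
The plan is proof by contradiction via tangle Floer homology. Assume $K \subset S^3$ is an L-space knot admitting an essential Conway sphere $S$, so that $S$ decomposes $(S^3, K)$ into two essential $2$-string tangles $(B_1, T_1)$ and $(B_2, T_2)$, with neither $T_i$ rational. I would attach to each $(B_i, T_i)$ its tangle Floer invariant --- Zibrowius's peculiar module, or equivalently the bordered sutured type $D$ module of Lipshitz-Ozsv{\'a}th-Thurston --- giving a module $CFT(T_i)$ over an algebra $\mathcal{A}$ associated with the $4$-punctured sphere. The gluing theorem recovers $\HFKa(K)$, and in fact $\CFKinf(K)$ up to quasi-isomorphism, as a box tensor product $CFT(T_1) \boxtimes_{\mathcal{A}} CFT(T_2)$.

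The next step is to pull back through this pairing the highly restrictive structure of $\HFKa$ for an L-space knot, as recorded in Table \ref{tab:L} and sharpened by Corollary \ref{cor:L-space-knot}: $\rk \HFKa(K, a) \in \{0,1\}$ for every $a$, the Alexander polynomial has the alternating form $\Delta_K(t) = t^g - t^{g-1} + \cdots + t^{-g}$, both the top and second-highest Alexander gradings are nonzero, and $K$ is fibered and strongly quasipositive. These translate into sharp constraints on how generators of $CFT(T_1)$ and $CFT(T_2)$ combine under the pairing. The final step would be a rigidity argument, carried out through the immersed-multicurve perspective on tangle Floer homology: under the L-space constraints above, the immersed multicurve associated with at least one $T_i$ should collapse to a single component of rational slope on the $4$-punctured sphere, which is to say $T_i$ is itself a rational tangle --- contradicting essentiality of $S$.

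The main obstacle is precisely this final rigidity step. Inverting a constraint on a tensor product into a constraint on each factor is in general subtle; even for the K{\"u}nneth formula recalled in Section \ref{sec:back} it is non-trivial to recover the factors from the product. A potentially useful alternative route would be to pass to the double branched cover $\Sigma_2(K)$, in which an essential Conway sphere in the complement of $K$ lifts to an essential torus. One could then attempt to rule out such a torus via Heegaard Floer obstructions to toroidality, exploiting the surgery formula of Section \ref{sec:back} together with the emerging picture relating L-space structures to absence of co-orientable taut foliations and left-orderability in the toroidal setting, ultimately aiming to contradict the L-space knot hypothesis on $K$.
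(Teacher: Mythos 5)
This statement is not proved in the paper at all: it is quoted as an open conjecture of Lidman and Moore, so there is no proof of record to compare your proposal against. Judged on its own terms, your proposal is a research program rather than a proof, and you identify the gap yourself: the ``rigidity'' step, in which the L-space constraints on $\HFKa(K)$ pulled back through the pairing $CFT(T_1)\boxtimes_{\mathcal{A}} CFT(T_2)$ are supposed to force one tangle invariant to collapse to a curve of rational slope, is exactly the hard content of the conjecture and is asserted, not argued. Nothing in the structure results you cite (rank $\le 1$ in each Alexander grading, the form of $\Delta_K$, fiberedness, strong quasipositivity) is shown to interact with the box tensor product in a way that descends to the factors; as you note, even the K\"unneth setting illustrates how lossy such pairings are. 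Moreover, the conclusion you aim for is stronger than needed and possibly not the right target: to contradict essentiality of the Conway sphere one does not need a tangle to be rational as a tangle, only that the pairing cannot reproduce L-space-knot Floer homology, and establishing either statement requires a genuine geography/classification theorem for the immersed multicurves of essential Conway tangles that your sketch does not supply.

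Your fallback route through the double branched cover also fails as stated. While an essential Conway sphere does lift to an essential torus in $\Sigma_2(S^3,K)$, the branched double cover of an L-space knot need not be an L-space: for instance $\Sigma_2(T_{3,7})$ is the Brieskorn sphere $\Sigma(2,3,7)$, which is not an L-space, so no contradiction with any ``L-space implies atoroidal'' principle is available, and such a principle is false in any case since toroidal L-spaces exist. The surgery formula recalled in Section \ref{sec:back} relates $\CFKinf(K)$ to surgeries on $K$, not to branched covers, so it gives no traction here without substantial additional input (e.g.\ the conjectural equivalences involving taut foliations and orderability, which remain unproven in the generality you would need). In short, both branches of the proposal stop precisely where the conjecture begins.
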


While we don't really know whether to expect coarse detection of L-space knots by knot Floer homology  (we conjecture it mainly because we find it  difficult even to produce families of L-space knots, let alone families with the same Floer homology), it is perhaps more justified to conjecture such a result for particular subsets of L-space knots.  For instance, Li and Ni make the following conjecture:

\begin{conj}[{\cite[Conjecture 1.3]{LN2013}}]  Suppose $K$ has the Floer homology of an L-space knot whose Alexander polynomial has its roots on the unit circle.  Then $K$ is an iterated torus knot.
\end{conj}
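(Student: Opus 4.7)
The plan is to combine severe algebraic rigidity on the Alexander polynomial with the inductive cabling structure of iterated torus knots, using L-space cabling as the reduction mechanism.

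First, since $K$ has the Floer homology of an L-space knot, the bigraded rank-one structure of $\HFKa(K)$ forces $\CFKinf(K)$ to be the ``staircase'' complex associated with $\Delta_K(t)$ (the Maslov grading pattern leaves no other possibility for the higher differentials). Via the surgery formula stated in Section \ref{sec:back}, this implies that $K$ itself admits positive L-space surgeries, so $K$ is an L-space knot. Then Corollary \ref{cor:L-space-knot}, combined with the fact that L-space knots are fibered, shows that $\Delta_K(t)$ is monic with sign-alternating coefficients in $\{-1,0,1\}$. Adding the hypothesis that every root of $\Delta_K(t)$ lies on the unit circle, Kronecker's theorem forces $\Delta_K(t)$ to factor as a product of cyclotomic polynomials $\prod_i \Phi_{n_i}(t)^{\pm 1}$; the sign-alternation condition further constrains which multisets of indices $\{n_i\}$ can occur.

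Second, iterated torus knots are identifiable from their Alexander polynomials via the cabling formula $\Delta_{K'_{p,q}}(t) = \Delta_{K'}(t^p) \cdot \Delta_{T_{p,q}}(t)$, so their Alexander polynomials are cyclotomic with very specific combinatorics. The strategy is to invert this: from the cyclotomic factorization of $\Delta_K(t)$, propose an outer cabling pattern $(p_n, q_n)$ by matching the ``largest'' torus-knot-type factor, and simultaneously produce a candidate companion polynomial for an L-space knot $K'$ of strictly smaller genus. One then invokes the Hedden--Hom L-space cabling theorem --- the $(p,q)$-cable of $K'$ is an L-space knot precisely when $K'$ is an L-space knot and $q/p \geq 2g(K')-1$ --- to identify $K$ with the $(p_n, q_n)$-cable of an L-space knot $K'$ realizing the decabled Floer data. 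Since cabling strictly increases genus, this reduction allows an induction on $g(K)$, with the unknot (where $\Delta_K = 1$) as the base case.

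The main obstacle is the step from ``$K$ has the Floer data of a cable'' to ``$K$ is itself a cable.'' This is a botany question for L-space knots, and Theorem \ref{thm:ribbon} of the present paper shows that botany is wildly non-unique for knots in general; hence new input specific to the L-space setting is needed. One promising direction combines Juh\'asz's sutured Floer homology --- which detects pieces of canonical sutured hierarchies --- with the tight cyclotomic constraints on $\Delta_K(t)$: a candidate iterated cabling structure prescribes a sutured hierarchy that one would need to recover directly from $\CFKinf(K)$. A secondary subtlety is that a given cyclotomic polynomial may admit several distinct interpretations as a torus-knot-cable polynomial (reflecting, e.g., the $T_{p,q}=T_{q,p}$ ambiguity, or compositions involving small cabling parameters), so the inductive step must exploit finer Floer data --- Maslov grading information or sutured refinements --- rather than $\Delta_K(t)$ alone, to pin down cabling parameters unambiguously at each stage. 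Since resolving this amounts to a rather strong uniqueness statement in the spirit of Conjecture \ref{finiteLspace}, one should expect that a complete proof will require a substantial new geometric ingredient beyond the Floer-theoretic toolkit used here.
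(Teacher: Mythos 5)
There is a fundamental mismatch here: the statement you are addressing is not a theorem of this paper at all, but an open conjecture quoted from Li and Ni \cite{LN2013}, and the paper offers no proof of it --- it appears in the concluding section precisely as an example of a botany-type question that is currently out of reach. Your text is accordingly not a proof but a strategy sketch, and the gap you yourself flag at the end is not a ``secondary subtlety'' to be patched later; it is the entire content of the conjecture. The passage from ``$K$ has the Floer data of an iterated torus knot'' to ``$K$ is an iterated torus knot'' is exactly the botany problem, and none of the ingredients you assemble touch it. The Hedden--Hom cabling criterion runs in the wrong direction: it tells you when a cable of a given L-space knot is an L-space knot, not that a knot whose invariants agree with those of a cable must itself be a cable. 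Likewise, the cyclotomic factorization of $\Delta_K(t)$ and the constraints from Corollary \ref{cor:L-space-knot} only restrict the shape of the invariants; they cannot certify any geometric decomposition of the knot complement, and Theorem \ref{thm:ribbon} (which you cite) is evidence that Floer-homological data alone is far from forcing such structure in general.

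Two further points. First, your inductive scheme implicitly asks the Floer data to determine the cabling parameters at each stage, but the paper explicitly notes that the $(2,3)$-cable of the trefoil and the $(3,4)$-torus knot have identical knot Floer homology \cite{CablingII}, so even among genuine iterated torus knots the ``decabling'' step is not well defined from $\HFKa$ or $\CFKinf$; at best one can hope for coarse detection, which is itself only conjectural (Conjecture \ref{finiteLspace}). Second, your opening reduction --- that a knot with the Floer homology of an L-space knot is itself an L-space knot --- is fine (the hat groups determine $\CFKinf$ up to homotopy in this setting, and the surgery formula then shows large surgeries are L-spaces), but it is also the only step of the outline that is actually complete. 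Everything after it is a restatement of the conjecture in different language, so the proposal should not be presented as a proof, even in outline.
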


The conjecture is motivated, in part, because an affirmative answer would imply a conjecture of Boyer and Zhang  that any finite filling of a hyperbolic knot complement must be of integral slope \cite{BZ1996}. The example of the $(2,3)$-cable of the trefoil and the $(3,4)$-torus knot  tells us that it is not true that Floer homology detects iterated torus knots on the nose.   Coarse detection, however, seems plausible.

\subsection{On small rank in knot Floer homology}\label{sub:rank} As with the trefoil, the figure eight --- being the other genus one fibered knot in $S^3$ --- is also characterized by knot Floer homology \cite{Ghiggini2007}. However, in contrast with Corollary \ref{cor:trefoil}, it is not clear that there is a finite collection of knots with knot Floer homology of rank 5 (both the  $(2,5)$- and $(3,4)$-torus knot have knot Floer homology of rank 5, as do certain cables of the trefoil).

\parpic[r]{\begin{tikzpicture}[scale=1,>=latex]
\node at (0.1,0) {$\bullet$}; 
\node at (-0.1,0) {$\bullet$}; 
\node at (-0.4,1) {$\bullet$}; \node at (-0.2,1) {$\bullet$}; \node at (0,1) {$\bullet$}; \node at (0.2,1) {$\bullet$}; \node at (0.4,1) {$\bullet$}; 
\node at (-0.1,2) {$\bullet$}; 
\node at (0.1,2) {$\bullet$}; 
\draw[thick,->,shorten <=3pt,shorten >=3pt] (-0.1,2) -- (-0.4,1);\draw[thick,->,shorten <=3pt,shorten >=3pt] (0.1,2) -- (0.4,1);
\draw[thick,->,shorten <=3pt,shorten >=3pt] (-0.2,1) -- (-0.1,0);\draw[thick,->,shorten <=3pt,shorten >=3pt] (0.2,1) -- (0.1,0);
\end{tikzpicture}}
By contrast, as a consequence of Theorem \ref{thm:ribbon}, there is an infinite family of distinct knots in $S^3$ sharing the knot Floer homology of the twist knot $6_1$. This results from the fact that this knot is ribbon; see Figure \ref{fig:bandsum}.  That is, if $K_0=6_1$ then by twisting the ribbon disk to obtain $\{K_i\}_{i\in\Z}$ we have that $\HFKa(K_i)$ is completely determined by the Alexander polynomial $-2t^{-1}+5-2t$ (and vanishing signature). In particular, this is an infinite family of distinct genus one knots with identical knot Floer homology of rank 9 (the subquotient complex $\Filt^{-1}(\{0\}\times \Z)$ is shown on the right).

There is an obvious question worth recording.
\begin{question}Are there only finitely many knots in $S^3$ with knot Floer homology of rank 5? Of rank 7?\end{question}

\subsection{Families of knots indistinguishable by homological invariants}\label{sub:Kanenobu} Kanenobu gave examples of distinct knots with identical HOMFLY polynomial \cite{Kanenobu1986}. We will denote these by $K_{p,q}$ where $p,q\in\Z$ record the number of full twists in two different locations on a ribbon disk; see Figure \ref{fig:kanenobu}. The knot $K_{0,0}$ is a connect sum of figure eight knots and, more generally, $K_{p,q}$ is a symmetric union of figure eight knots. See \cite{Kanenobu1986} for details; see \cite{Watson2007} for for various generalizations of the construction. From the foregoing discussion, it is worth recording that the $K_{p,q}$ are fibered and, apart from the connect sum for $(p,q)\ne(0,0)$, are all hyperbolic as well \cite[Theorem 2]{Kanenobu1986}. 

\begin{figure}[ht!] \centering
\labellist \large
\pinlabel $p$ at 111 110
\pinlabel $q$ at 252 110
	\endlabellist
\includegraphics[scale=0.4]{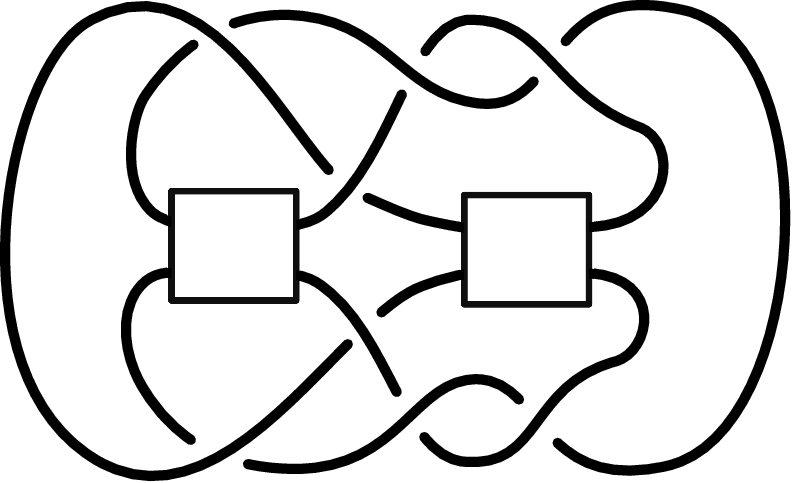}
\caption{The Kanenobu knot $K_{p,q}$, where $p,q\in\bZ$ record the number of full-twists on two strands following the convention in Figure \ref{fig:bandsum} (note that this convention agrees with \cite{Kanenobu1986} but differs from that of \cite{Watson2007}). Notice that $K_{p,q}$ may be realized as the band sum of two unknots in two different ways by cutting the band at either of the two twist sites. The infinite family of knots considered in Theorem \ref{thm:fail} arise for integers $p=-q=n$.}\label{fig:kanenobu}
\end{figure}

Let $K_n$ be the Kanenobu knot $K_{n,-n}$ for $n\in\Z$. This is an infinite family of distinct ribbon knots \cite[Lemma 2]{Kanenobu1986}. This final result is recorded for posterity.

\begin{theorem}\label{thm:fail} The following homological invariants fail to separate the knots $\{K_n\}_{n\in\Z}$:  
\begin{itemize}
\item[(i)] Khovanov homology
\item[(ii)] odd Khovanov homology
\item[(iii)] $sl(N)$ homology
\item[(iv)] HOMFLY homology
\item[(v)] knot Floer homology
\end{itemize}
\end{theorem}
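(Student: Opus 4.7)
The central geometric observation is that $K_{p,q}$ admits two distinct presentations as a band sum of two unknots: one obtained by cutting the band at the first twist site (and treating the $q$-twisted strands as part of the band), and one by cutting at the second twist site. In the first presentation, adding a full twist to the band produces $K_{p+1,q}$; in the second, it produces $K_{p,q+1}$. This is exactly the setup of Theorem \ref{thm:HFKsame}, applied at each site, so one immediately obtains $\HFK(K_{p,q})\cong \HFK(K_{p+1,q})$ and $\HFK(K_{p,q})\cong \HFK(K_{p,q+1})$. Iterating these isomorphisms gives $\HFK(K_{p,q})\cong \HFK(K_{0,0})$ for all $p,q\in \Z$, which in particular handles case (v).

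For cases (i)--(iv) the strategy is parallel. Each of Khovanov, odd Khovanov, $sl(N)$, and HOMFLY homology satisfies an oriented skein exact sequence of the type used in Proposition \ref{prp:skein} (see \cite{Rasmussen} for the $sl(N)$ and HOMFLY analogues). At either twist site of $K_{p,q}$ the oriented resolution is the two-component unlink, whose invariants in each theory are well understood. Adapting the analysis in the proof of Theorem \ref{thm:Khdifferent}, a full twist at one site shifts the quantum grading of the invariant by a fixed amount while leaving the underlying bigraded structure intact (modulo an explicit grading translation). Because the two twist sites in the Kanenobu disk carry opposite orientations, the shifts at the two sites act in opposite directions. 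Along the diagonal $p+q=0$ traced out by the family $K_n = K_{n,-n}$, these shifts cancel exactly, and one concludes that the invariants of $K_n$ and $K_m$ agree for all $n,m\in\Z$.

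The main obstacle is making the shift-cancellation argument precise in each homology theory. For Khovanov homology this amounts to rerunning the analysis of Theorem \ref{thm:Khdifferent}, but with two oppositely oriented twist sites treated simultaneously: after composing the two full-twist operations, the induced effect on the graded vector space, on Turner's $d_2$ differential, and on the invariant $\tilde s$ is the identity. Analogous spectral-sequence arguments, using Rasmussen's $d_N$ differentials for $sl(N)$ homology and the corresponding differentials on HOMFLY homology, dispose of (iii) and (iv); odd Khovanov homology carries its own oriented skein sequence to which the same cancellation applies. In each case the crux is to verify that the grading shift induced by a positive full twist at site one is precisely inverse to the shift induced by a negative full twist at site two, so that along the line $p+q=0$ the homology is preserved on the nose rather than merely up to a grading translation.
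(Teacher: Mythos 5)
Your treatment of (v) is essentially the paper's own argument: using the fact (noted in the caption of Figure \ref{fig:kanenobu}) that $K_{p,q}$ is a band sum of two unknots in two ways, one applies Theorem \ref{thm:HFKsame} at either twist site and iterates to get $\HFKm(K_{p,q})\cong\HFKm(K_{0,0})$ for all $p,q$, which covers the family $K_n=K_{n,-n}$. (A small slip: the cancellation along this family comes from adding $+1$ twists at one site and $-1$ twists at the other, not from the two sites ``carrying opposite orientations.'')

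For (i)--(iv), however, the paper does not reprove anything; it cites Watson \cite{Watson2007}, Greene--Watson \cite{GW2011}, and Lobb \cite{Lobb2011}, and your proposed substitute has a genuine gap. The mechanism you invoke --- a full twist induces a global quantum-grading shift, and the shifts at the two sites cancel along $p+q=0$ --- is not what the skein analysis yields, and is false as stated: already at the level of the Jones polynomial, Equation \eqref{jones} shows that a full twist fixes a distinguished ``unknot-like'' summand and shifts only its complement, rather than translating the whole invariant. Upgrading this to the homological statement you need (``distinguished generator fixed, the rest shifted'') is exactly the hard part of the proof of Theorem \ref{thm:Khdifferent}, and there it is established only under special hypotheses (support in $\delta\in\{-1,0,1\}$ and even $q$-gradings), via Turner's canceling differential, the slice bound on $\tilde s$, and Kronheimer--Mrowka's unknot detection \cite{KM2010}; in the remaining cases that proof distinguishes the knots without ever determining how the homology changes, so ``rerunning the analysis'' does not produce the isomorphism you want even for Khovanov homology, let alone its composition over two twist sites. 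For (ii)--(iv) the required inputs --- oriented skein sequences with controlled gradings, Lee/Turner/Gornik-type canceling differentials, slice-genus bounds on the resulting $s$-type invariants, and a detection theorem playing the role of Kronheimer--Mrowka --- are theory-specific, and several are unavailable (there is no such detection theorem for HOMFLY or odd Khovanov homology to hand), so the ``same cancellation'' does not transfer. The cited papers prove (i)--(iv) by direct computations tailored to each theory (indeed showing that these invariants of $K_{p,q}$ are governed by $p+q$); some such argument, or the citations themselves, is needed where your sketch currently asserts the conclusion.
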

\begin{proof}
The case (i) is established by the second author \cite{Watson2007}, and a modification of the argument is used with Greene in \cite{GW2011} to obtain (ii). Both (iii) and (iv) are results of Lobb \cite{Lobb2011}. Finally, since each $K_n$ is ribbon, (v) is an application of Theorem \ref{thm:ribbon}.
\end{proof}

\begin{footnotesize}
\subsection*{Acknowledgements} 
This paper began with a conversation in 2007; we apologize for how long it has taken to get it written. As the work now represents an effort to collect results potentially bound for folklore, the main theorems owe a debt to others.  Jake Rasmussen, in particular, pointed out an infinite family of $3$-strand pretzel knots with identical Floer homology; this provided direct inspiration for Theorem \ref{thm:infinite}. Allison Moore and Laura Starkston independently arrived at infinite families of knots with the same Floer homology using similar techniques \cite{MooreStarkston}.  For Theorem \ref{thm:identity}, Jeremy Van Horn-Morris pointed out to the authors that the only right-veering surface diffeomorphism which remains right-veering upon inversion is the identity. Jake Rasmussen also observed that the second highest group of a knot admitting L-space surgeries is non-trivial, as well as the utility of his inequality for the $h$-invariant, leading to the proof of Theorem \ref{thm:obstruction}.   We also thank many people whose interest and input  led to the existence of this note: Sebastian Baader, Josh Greene, Eli Grigsby, Jen Hom, Daniel Krasner, Allison Moore, Yi Ni, Brendan Owens and Zhongtao Wu, among others.  
\end{footnotesize}

\bibliographystyle{plain}
\bibliography{bibliography}

\end{document}